\definecolor{gr}{rgb}   {0.,   0.69,   0.23 }
\definecolor{bl}{rgb}   {0.,   0.5,   1. }
\definecolor{mg}{rgb}   {0.85,  0.,    0.85}
\definecolor{yl}{rgb}   {0.8,  0.7,   0.}
\definecolor{or}{rgb}  {0.7,0.2,0.2}
\newtheorem{theorem}{Theorem} [section]
\newtheorem{lemma}[theorem]{Lemma}
\newtheorem{proposition}[theorem]{Proposition}
\newtheorem{remark}[theorem]{Remark}
\DeclareMathOperator{\med}{med}
\newcommand{\noi}{\noindent}
\newcommand{\Z}{\mathbb{Z}}
\newcommand{\R}{\mathbb{R}}
\newcommand{\C}{\mathbb{C}}
\newcommand{\T}{\mathbb{T}}
\let\P= \undefined
\newcommand{\P}{\mathbf{P}}
\newcommand{\Q}{\mathbf{Q}}
\newcommand{\E}{\mathbb{E}}
\newcommand{\F}{\mathcal{F}}
\newcommand{\al}{\alpha}
\newcommand{\be}{\beta}
\newcommand{\dl}{\delta}
\newcommand{\nb}{\nabla}
\newcommand{\Dl}{\Delta}
\newcommand{\eps}{\varepsilon}
\newcommand{\kk}{\kappa}
\newcommand{\g}{\gamma}
\newcommand{\G}{\Gamma}
\newcommand{\ld}{\lambda}
\newcommand{\s}{\sigma}
\newcommand{\ft}{\widehat}
\newcommand{\wt}{\widetilde}
\newcommand{\cj}{\overline}
\newcommand{\dt}{\partial_t}
\newcommand{\dd}{\partial}
\newcommand{\embeds}{\hookrightarrow}
\newcommand{\ta}{\theta}
\renewcommand{\l}{\ell}
\renewcommand{\o}{\omega}
\renewcommand{\O}{\Omega}
\newcommand{\les}{\lesssim}
\newcommand{\ges}{\gtrsim}
\newcommand{\jb}[1]
{\langle #1 \rangle}
\newcommand{\ind}{\mathbf 1}
\newcommand{\M}{\mathcal{M}}
\newcommand{\N}{\mathbb{N}}
\renewcommand{\H}{\mathcal{H}}
\newtheorem*{ackno}{Acknowledgements}
\newcommand{\I}{\mathcal{I}}
\numberwithin{equation}{section}
\numberwithin{theorem}{section}
\newcommand{\hf}{\mathfrak{h}}
\newcommand{\Pp}{\mathcal{P}}
\newcommand{\D}{\mathbb{D}}
\begin{document}
\baselineskip = 14pt

\title[Invariant Gibbs dynamics for  NLS on the disc]
{Invariant Gibbs dynamics for the nonlinear Schr\"odinger equations on the disc}

\author[J.~Forlano and  Y.~Wang]
{Justin Forlano  and Yuzhao Wang}

\address{
Justin Forlano\\
 School of Mathematics\\
 Monash University\\
 VIC 3800\\
 Australia\\
 and\\
 School of Mathematics\\
The University of Edinburgh\\
and The Maxwell Institute for the Mathematical Sciences\\
James Clerk Maxwell Building\\
The King's Buildings\\
Peter Guthrie Tait Road\\
Edinburgh\\ 
EH9 3FD\\
United Kingdom}
\email{justin.forlano@monash.edu}

\address{
Yuzhao Wang\\
School of Mathematics\\
Watson Building\\
University of Birmingham\\
Edgbaston\\
Birmingham\\
B15 2TT\\ United Kingdom}

\email{y.wang.14@bham.ac.uk}

\subjclass[2020]{35Q55, 35R01, 35R60, 37K99}

\keywords{nonlinear Schr\"odinger equation; Gibbs measure; Gibbs dynamics, strong solution; random averaging operator}

\dedicatory{Dedicated to Professor  Carlos E. Kenig on the
occasion of his seventieth birthday}

\begin{abstract}
We consider the two-dimensional defocusing nonlinear Schr\"odinger equation (NLS)  on the unit disc in $\R^2$ with the Gibbs initial data under radial symmetry.
By using a type of random averaging operator ansatz, %random matrix\,/\,tensor estimate, 
we build a strong local-in-time  
solution theory, 
and thus 
 prove almost sure global well-posedness
and invariance of the Gibbs measure
via Bourgain's invariant measure argument. 
This work completes the program initiated by Tzvetkov
(2006, 2008) on the construction of invariant Gibbs dynamics 
(of strong solutions) for NLS on the disc.
\end{abstract}

%\date{\today}

\maketitle

\tableofcontents

\section{Introduction}
\label{SEC:1}

In this paper, we study the radial defocusing nonlinear Schr\"odinger equation (NLS) on the unit disc $\mathbb{D} \subset \R^2$:
\begin{align}
\begin{cases}
i\dt u + \Delta u - |u|^{2k} u = 0, \\
u(0) = u_{0},\\
u|_{\R \times \partial \mathbb{D}} = 0,
\end{cases}
(t,x) \in \R \times \mathbb{D}
\label{NLS}
\end{align}

\noi
where $k\in \N$,
\[
\mathbb{D} = \{(x_1,x_2) \in \R^2: x_1^2+x_2^2 \le 1\}
\]

\noi
is the unit disc with boundary $\partial \D= \{(x_1,x_2) \in \R^2: x_1^2+x_2^2 = 1\}$, $u_0$ is radially symmetric, and thus the solution
$u(t,x): \R\times \mathbb{D} \to \C$ is also radially symmetric.
Our work here aligns with the research initiated in \cite{Tz1} on the study of the long time behaviour of solutions to \eqref{NLS}.
Namely, for each $k\in \N$, we construct global strong solutions to NLS \eqref{NLS} almost surely with respect to the Gibbs measure.

The construction of Gibbs dynamics for nonlinear dispersive equations has drawn a lot of attention in recent years; see for instance, \cite{Fri85, LRS, Zhidkov91, BO94, Zhi94, McKean, BO96, BO97, Tz1, Tz2, QV, OhGibbs1, OhGibbs2, OhWhite2, NORS, Deng0, Deng, Richards, ST1, KMV, Bring0, ST2, DNY4, FOSW, CK, DNY3, GKOT, BDNY, DNY2, FO, FKV, LW24, Robert, Bring2, BS, OWnls}. A principle motivation for this study is to answer the question posed by Zakharov in 1983, detailed in \cite{Fri85}, concerning the rigorous explanation of numerical experiments which seemed to show that some nonlinear dispersive equations possessed a `returning' property: that is, solutions appear to become very close to the initial data after a long time. The classical Poincar\'{e} recurrence theorem provides a justification for this behaviour, modulo null sets with respect to some probability measure on the phase space. 

Pursuing this approach, one needs to (i) identify a candidate invariant probability measure for the given nonlinear dispersive PDE, and (ii) construct a well-defined flow $\Phi(t)$, at least almost surely with respect to the measure from part (i) and prove the invariance of the measure under this flow. 
Natural candidates for the invariant measure are informed by the Hamiltonian structure of the equation, with the canonical ensemble in this setting being the Gibbs measure; see Section~\ref{SEC:Gibbs} for the Gibbs measure for \eqref{NLS}. In the second part (ii), the uniqueness of the solutions is crucial in order to verify the group property $\Phi(t+s)=\Phi(t)\circ \Phi(s)$ for all $t,s\in \R$ and thus verify the assumptions for the Poincar\'{e} recurrence theorem.

However, completing this program  presents considerable analytic and probabilistic difficulty; see for example, \cite{BO94, BO96, DNY2, BDNY}.
In spite of these difficulties, the invariance of the Gibbs measure for nonlinear Schr\"{o}dinger equations was proved in \cite{LRS, BO94,BO96, DNY2, FOSW, OWnls} on the one and two dimensional tori. 
 In \cite{Tz1,Tz2}, Tzvetkov initiated the study aiming to extend the invariance of Gibbs measures for \eqref{NLS} posed on the (flat) tori to other compact manifolds. 
 In particular, he constructed a well-defined dynamics for \eqref{NLS} for sub-quintic nonlinearities $(k<2)$ that preserves the corresponding Gibbs measure. 
Later, Bourgain-Bulut \cite{BB2} considered all polynomial nonlinearities (\textit{any} $k\in \N$) and proved the existence of solutions on the support of the Gibbs measure. However, it is not clear whether the solutions obtained in \cite{BB2} satisfy the group property and thus whether the recurrence property can be verified.

The main purpose of this paper is to close this gap by constructing {\it strong} solutions almost surely with respect to the Gibbs measure for every $k\in \N$.
%, which are the unique limits of smooth solutions of \eqref{NLS}. 
In particular, our solutions satisfy the group property and the Gibbs measure is invariant under the resulting flow. See our main results Theorem~\ref{THM:main} and Theorem~\ref{THM:main1}. Our resolution ansatz for this purpose is heavily inspired by the breakthrough work \cite{DNY2} on $\T^2$.

\subsection{NLS on the disc} \label{SEC:Gibbs}
In this section, we discuss the deterministic well-posedness theory for the radial NLS on $\mathbb{D}$. To this end, we first need to recall some
facts regarding the radial eigenfunctions of the Laplacian on $\mathbb{D}$.  We refer the reader to \cite{Tz1} and the references therein for more details. Given a radially symmetric function $f:\mathbb{D}\to \R$, there exists $\wt{f}:[0,1]\to \R$ such that
 \begin{align*}
f(x)=f((x_1,x_2)) = \wt{f}(r)
\end{align*}
where $x_1 =r\cos \ta$ and $x_2= r\sin \ta$, $0\leq \ta<2\pi$ and $0\leq r\leq 1$. Throughout this paper, we will always identify such radial functions $f$ with their corresponding function $\wt{f}$.
 
 We let $J_0 :[0,\infty) \to \R$ be the Bessel function of the first kind of order zero 
 which can be represented via the integral
\begin{align*}
J_{0}(x) = \frac{1}{\pi}\int_{0}^{\pi} e^{ix \cos \ta} d\ta. 
% \label{Jn}
\end{align*}
In particular,  $J_0$ is a bounded function on $\R$.
%In particular,  $J_0$ is a bounded function on $\R$ and $J_0(0)=1$. 
%Moreover, it has the following large $x$ asymptotics: there exists $C>0$ and functions $r_1:[0,\infty)\to \R$ such that 
%\begin{align}
%\begin{split}
%J_0 (x) & = \sqrt{ \frac{2}{\pi}} \frac{ \cos( x-\tfrac{\pi}{4})}{\sqrt{x}} +r_{1}(x), \quad \text{where} \quad  |r_1(x)| \leq C x^{-\frac 32}.
%\end{split} \label{J0asymp}
%\end{align}
Let $\{\ld_{n}\}_{n\in \N}$ be the simple zeroes of $J_0(x)$, which satisfy $\ld_1>0$, are strictly increasing, and have the asymptotic
\begin{align}
    \label{ev}
    \ld_n = \pi \big( n-\tfrac 14\big) + O(\tfrac 1n).
\end{align}
Thus, $\ld_n \sim n$ as $n \to \infty$. 
Then, the functions $\{J_0 (\ld_n r)\}_{n \ge 1}$
form a Dirichlet basis for $L_{\textup{rad}}^2 (\mathbb{D})$, the space of radial $L^2 (\mathbb{D})$ functions on $\mathbb{D}$ with vanishing Dirichlet boundary condition. Moreover, the functions $\{J_0 (\ld_n r)\}_{n \ge 1}$ are eigenfunctions of $- \Delta$ with eigenvalues $\ld_n^2$.
Let $e_n : \mathbb{D} \to \R$ be the $n$-th $L^2$-normalized eigenfunction defined by
\begin{align*}
% \label{ef}
e_n = e_n (r) = \frac{J_0(\ld_n r)}{\|J_0(\ld_n \cdot)\|_{L^2(\mathbb{D})} }.
\end{align*}
 Recall from \cite[Lemma 2.1]{Tz1} that
\begin{align}
\| J_{0}(\ld_n \cdot )\|_{L^{2}(\mathbb{D})} \sim n^{-\frac{1}{2}}. \label{J0L2}
\end{align}
Unlike the flat torus case $\T^2$, the eigenfunctions $\{e_n\}_{n\in \N}$ are \textit{not} uniformly bounded in $n$: in fact, they have the growth rate
\begin{align}
\|e_{n}\|_{L^{\infty}(\mathbb{D})} \sim n^{\frac 12} \to +\infty \quad \text{as} \quad n\to \infty. \label{enLinfty}
\end{align}
We refer to Lemma~\ref{LEM:efesti} for the behaviour of the $L^p(\mathbb{D})$-norms of $\{e_n\}_{n\in \N}$ in $n\in \N$,  $2\leq p\leq \infty$.

The family $\{e_n\}_{n\ge 1}$ forms an orthonormal basis of $L^2_{\textup{rad}} (\mathbb{D})$, so that, given any $f \in L^2_{\textup{rad}} (\mathbb{D})$, we have the representation:
\[
f(x) = \sum_{n=1}^\infty \ft f(n) e_n (x) = \sum_{n=1}^\infty f_n e_n (x),
\]

\noi
where the Fourier coefficients are computed through
\[
\ft f (n ) = f_n = \frac1\pi \int_{\mathbb{D}} f(x) e_n (x) dx. 
\]
We then define the Sobolev spaces $W^{s,p}_{\text{rad}}(\mathbb{D})$ as the closure of smooth radial functions under the norm 
\begin{align*}
\|f\|_{W^{s,p}(\mathbb{D})} = \bigg\| \sum_{n=1}^{\infty} \ld_{n}^{s} \,\ft f(n)e_{n}(x) \bigg\|_{L^p(\mathbb{D})}
\end{align*}
When $p=2$, we denote $H_{\text{rad}}^{s}(\mathbb{D})= W_{\text{rad}}^{s,2}(\mathbb{D})$ and note that the Plancherel theorem implies 
\begin{align*}
\| f\|_{H_{\text{rad}}^{s}(\mathbb{D})} = \|  (\sqrt{-\Dl})^{s}f \|_{L^{2}_{\text{rad}}(\D)}= \bigg( \sum_{n\in \N} \ld_{n}^{2s} |\ft f(n)|^2 \bigg)^{\frac 12}.
\end{align*}
In the remainder of this paper, we will drop the $`\text{rad}$' subscript on the functional spaces.

We now move onto discussing the low regularity well-posedness for NLS \eqref{NLS}. Smooth solutions to \eqref{NLS} conserve both the Hamiltonian (energy)
\begin{align}
H_{k}(u) = \frac12 \int_{\D}   |\nabla  u |^2  + \frac{1}{2k+2}    |u|^{2k+2} dx. \label{H}
\end{align}
and the mass 
\begin{align}
M(u)  = \frac{1}{2} \int_{\mathbb{D}} |u|^2 dx,\label{M}
\end{align}
which motivates studying the well-posedness of \eqref{NLS} in the $H^{s}(\D)$ scale of Sobolev spaces.
On $\R^2$, \eqref{NLS} has a scaling symmetry which induces the scaling critical regularity 
\begin{align}
s_{\text{crit}}(k)  =1-\tfrac{1}{k}. \label{scrit}
\end{align}
On $\T^2$ or $\D$, the scaling is no longer a symmetry\footnote{It keeps the equation \eqref{NLS} invariant but dilates the spatial domain $\T^2$ or $\D$.} but the heuristic provided by $s_{\text{crit}}(k)$ persists. For a heuristic explanation for the relevance \eqref{scrit} on $\T^2$ \cite[Section 1.3.1]{DNY2} and 
Section~\ref{SEC:scaling} for $\D$. Local well-posedness of \eqref{NLS} in $H^{s}(\D)$ for $s>1$ follows easily by applying the algebra property of $H^{s}(\D)$ and a contraction mapping argument on the Duhamel formulation of \eqref{NLS}:
\begin{align*}
u(t)  =S(t) u_0  +i\int_{0}^{t} S(t-t') |u|^{2k}u(t')dt',
\end{align*}
\noi
where $S(t) = e^{it \Delta}$ denotes the linear propagator, defined as the Fourier multiplier operator 
\begin{align}
S(t) f = \sum_{n\in \N} e^{-it\ld_n^2} \ft{f}(n) e_n. \label{St}
\end{align}
To reach the energy space $H^{1}(\D)$ and below, one needs to exploit the dispersive properties of solutions to \eqref{NLS} through Strichartz estimates and the Fourier restriction norm method \cite{BO93}.
The key tools for this are the $L^4_{t,x}$-Strichartz estimates:
\begin{align}
\| \eta(t) S(t)f\|_{L_{t,x}^{4}(\R\times \T^2)} \les \|f\|_{H^{\eps}(\T^2)} \quad \text{and} \quad \| \eta(t) S(t)f\|_{L_{t,x}^{4}(\R\times \D)} \les \|f\|_{H^{\eps}(\D)} \label{L4strich}
\end{align}
for any $\eps>0$ and where $\eta\in C_{c}^{\infty}(\R)$ and $\eta \equiv 1$ on $[-1,1]$. Bourgain \cite{BO93} proved \eqref{L4strich} on $\T^2$ and the estimate on $\D$ was essentially proved in \cite[Proposition 4.1]{Tz1}. Using \eqref{L4strich}, 
one obtains local-well posedness for \eqref{NLS} for any $s>s_{\text{crit}}$, and global well-posedness in $H^{1}(\D)$ thanks to the conservation of the Hamiltonian in \eqref{H}.  Global well-posedness below $s=1$ has been extensively studied in the $\T^2$ case; see \cite{HK1, HK2} for recent breakthroughs in the cubic case and the references therein, and \cite{YY} for the quintic case at the scaling critical regularity. There appears to be comparatively less known for \eqref{NLS} on $\D$, with global well-posedness below the energy only studied in the cubic case \cite{SY}. 
We also mention the series of highly influential works on studying nonlinear Schr\"{o}dinger equations on general manifolds without boundary \cite{BGT1, BGT2, BGT3}.  Moreover, we point out that if we remove the radial assumption in the cubic case of \eqref{NLS}, then the corresponding flow map fails to be uniformly continuous at low regularity;  see \cite[Theorem 2 and Remark 1.2.5]{BGT0}.

There is also the focusing variant of \eqref{NLS} where $-|u|^{2k}u$ is changed to $+|u|^{2k}u$. In this case, the local well-posedness is the same as that for the defocusing equation but the global theory is different. Indeed, the Hamiltonian \eqref{H} is no longer always sign definite and blow-up can occur if the mass \eqref{M} is too large; see \cite{BGT0,HK2} for further discussion.

\subsection{The Gibbs measure}

We will study \eqref{NLS} from a probabilistic point of view and construct almost sure global-in-time dynamics with respect to the Gibbs measure.  Let $k\in \N$.
The Gibbs measure, denoted by $\rho_{k}$, is formally written as
\begin{align}
    \label{Gibbs}
d\rho_{k} = Z^{-1} e^{- H_{k}(u)}  d u ,
\end{align}

\noi
where $H_k (u)$ is the Hamiltonian of \eqref{NLS} defined in \eqref{H}, and $du$ is the non-existent Lebesgue measure in infinite dimensions. Due to the Hamiltonian structure of \eqref{NLS} and the conservation of $H_k$, we expect $\rho_{k}$ to be invariant under the flow of \eqref{NLS}. %This was rigorously proved by Tzvetkov \cite{Tz2} when $k=1$. 

%As we will discuss below, when $k\geq 2$, $\rho_{k}$ is supported on functions having scaling super-critical regularity as compared to \eqref{scrit}. Thus, the main difficulty in this work is to construct a well-defined dynamics for \eqref{NLS} with this scaling super-critical initial data.

Rigorously, we define the Gibbs measure $\rho_{k} $
as an absolutely continuous measure with respect to the (massive) Gaussian free field:
\begin{align*}
d \mu = Z^{-1} e^{- \frac12 \int_{\D}    | \nabla u  |^2    {dx}}  d u. 
% \label{GFF}
\end{align*}

\noi
Here, we view $\mu$ as the induced probability measure under the
map 
\begin{align}
    \label{data}
\o \longmapsto u_0^\o (x) = \sum_{n =1}^{\infty} \frac{g_n (\o)}{ \pi \ld_{n}}  e_n (x),
\end{align}
\noi
where $\{g_n\}_{n \in \N}$ is a sequence of independent standard
complex-valued Gaussian random
variables on a probability space $(\O, \F, \mathbb{P})$. 
Using the $L^p(\D)$-estimates for the eigenfunctions $\{e_n\}_{n\in \N}$ in Lemma~\ref{LEM:efesti}, it can be shown that $u_{0}^{\o}\in W^{s,p}(\mathbb{D}) \setminus W^{s(p), p}(\mathbb{D})$ almost surely, where 
\begin{align}
s<s(p) : = \begin{cases} 
 \frac{1}{2} & \quad \text{if} \quad 1\leq p\leq 4,\\
 \frac{2}{p} & \quad \text{if} \quad 4<p< \infty.
\end{cases} \label{sp}
\end{align}
Namely,  $u_0^\o$ lies in $H^s(\mathbb{D})$ for $s < \frac12$ 
but not in $H^{\frac12} (\mathbb{D})$ almost surely. 
In particular,
we have that $\| u \|_{L^{2k+2} (\mathbb{D})} < \infty$ almost surely with respect of the Gaussian free field $\mu$. 
Hence, the Gibbs measure $\rho_{k}$ is rigorously well-defined as the following weighted Gaussian measure:
\begin{align}
d \rho_{k} =Z^{-1}  e^{-\frac1{2k+2} \int_{\D} |u|^{2k+2} dx } d\mu , \label{rhok}
\end{align}
which is a well-defined probability measure on $H^s (\mathbb{D})$, for $s < \frac{1}2$. 

We emphasise the dependence of $s(p)$ in \eqref{sp} on $p$. This means that the regularity of the samples of the Gaussian free field in \eqref{data} depends on the integrability exponent. This is a phenomenon of the geometric setting under consideration. Namely, the spatial variable $x$ plays a role due to the non-uniformity of the $L^p$-eigenfunction estimates \eqref{enLinfty}. In the torus case $\T^2$, the almost sure regularity of the Gaussian free field does not depend on the integrability exponent and it holds that $u_0^{\o}\in W^{\frac 12-\eps,\infty}(\T^2)$ almost surely for any $\eps>0$.

The roughness of the initial data on the support of $\rho_{k}$ is the primary source of the difficulty for constructing even local-in-time solutions for \eqref{NLS} with respect to $\rho_{k}$. Recall from \eqref{scrit}, that in the sub-quintic case, $s_{\text{crit}}(k)>\frac 12$, we have deterministic local well-posedness on the support of $\rho_{k}$, and indeed this is the regime covered in~\cite{Tz1, Tz2}. Strictly speaking, Tzvetkov was able to consider more general nonlinearities $F(u)$ which, with some regularity assumptions, satisfy a sub-quintic growth bound. However, for quintic and higher power-type nonlinearities $(k\geq 2)$, we have $s_{\text{crit}}(k)>\frac 12$, so that the initial data \eqref{data} is of a scaling super-critical regularity. Thus we need to go beyond the deterministic well-posedness theory to construct strong solutions with respect to $\rho_{k}$.  

In \cite{DNY2} and expanded on in \cite{DNYscale}, Deng-Nahmod-Yue introduced the notion of a probabilistic scaling regularity $s_{\text{p}}(k)$. This provides a heuristic for the probabilistic well-posedness of nonlinear dispersive equations with random Gaussian initial data mimicking the heuristic that $s_{\text{crit}}(k)$ provides. Namely, it is unlikely that we could construct strong solutions with even random initial data of regularity below $s_{p}(k)$.
In our setting of $\D$, we consider more general data of the form
\begin{align*}
    % \label{data2}
u_0^\o (x) = \sum_{n =1}^{\infty} \frac{g_n (\o)}{ \pi \ld_{n}^{s+1/2}}  e_n (x),
\end{align*}
so that $\|u_0^{\o}\|_{H^{s-}}\les 1$ almost surely. When $s=\frac 12$, we get the Gaussian free field data \eqref{data} which is relevant for the Gibbs measure \eqref{rhok}.  In Section~\ref{SEC:scaling} we run the probabilistic scaling idea as in \cite{DNY2}, and find that 
\begin{align} \label{prob_scaling}
s_{\text{p}}(k)=\frac 12 - \frac{3}{4k}.
\end{align}
Notice that $s_{\text{p}}(k) < s_{\text{crit}}(k)$ for every $k\in \N$ and $s_{\text{p}}(k) <\frac 12 $. 
This provides us with some hope for the tractability of the Gibbs measure problem for \eqref{NLS} for any $k\in \N$.
% However, the gap $s_{\text{crit}}(k) -s_{\text{p}}(k)$ is increasing in $k$, indicative of the increasing difficulty of the problem in $k$.

The construction of the Gibbs measures \eqref{Gibbs} in the focusing case of \eqref{NLS} is more involved and can only be made in the cubic case. We discuss these issues further in Remark~\ref{RMK:focus} and how they fit with respect to our main results, which we discuss in the next section.

We also point out that Burq-Thomann-Tzvetkov \cite{BTT} considered the invariance of the Gibbs measure problem for the two-dimensional cubic NLS \eqref{NLS} on arbitrary spatial domains. Using a compactness argument, they constructed global solutions on the support of the Gibbs measure, although the uniqueness, in general, remains open.

\subsection{Main results}

In order to formulate our main results, we first need to introduce a suitable finite dimensional approximating flow for \eqref{NLS}. 
For a dyadic integer $N \in 2^{\N}$, we define $E_{N} =\{ n \, : \ld_{n}\leq N\}$ and 
 the corresponding frequency truncation operator $\P_{\le N}$ by
\[
\P_{\le N} f = \sum_{n \in E_N}  \ft f(n) e_n (x).
\]
We also define $\P_N$ to be the projection 
\[
\P_N f = \sum_{n \in E_{N}\setminus E_{N/2}}  \ft f(n) e_n (x),
\]
involving only frequencies around $N$. 
Note that $\P_{N}=\P_{\leq N} -\P_{\leq N/2}$ and since $\ld_1\sim 2.4048$, $\P_{1}u= \P_{2}u=0$.
%\noi
%where $n\sim N$ means $N \le n < 2N$. We also 
\noi
We then consider the following truncated version of \eqref{NLS}:
\begin{align}
    \label{NLS_N}
    \begin{cases}
i\dt u_N + \Delta u_N - \P_{\le N} (|u_N|^{2k} u_N) = 0, \\
u_N(0) = \P_{\le N} u_{0}, \quad u_N\vert_{\R\times \partial \mathbb{D}}=0
\end{cases}
\end{align}
Note that $u_{1}=u_{2}\equiv 0$.
For each $N\ge 1$ and $u_0 \in L^2_{\textup{rad}} (\mathbb{D})$,
the truncated equation \eqref{NLS_N} forms a finite dimensional Hamiltonian system of ODEs with locally Lipschitz nonlinearities.
Moreover, both the mass \eqref{M} and energy $H_k (u_{N})$ are conserved under \eqref{NLS_N} and thus there exists a unique global solution $u_N$: $\R \times \mathbb{D} \to \C$ to \eqref{NLS_N} satisfying the associated Duhamel formula
\begin{align}
    \label{Duhamel}
    u_N (t) = e^{it \Delta} \P_{\le N} u_0 + i  \int_0^t e^{i(t-t') \Delta} \P_{\le N}  (|u_N|^{2k} u_N) (t') dt'.
\end{align}
It can then be shown \cite{BO94} that a frequency truncated version of \eqref{Gibbs} 
is invariant under the evolution of the frequency truncated equation \eqref{NLS_N}.

In what follows, 
we consider the Cauchy problem NLS \eqref{NLS} with Gaussian random data $u_0^\o$ given in \eqref{data}. 
Our first goal is to construct local-in-time solutions to \eqref{NLS} almost surely with respect to the random initial data \eqref{data}. In particular, we show that the sequence of finite dimensional solutions $\{u_{N}\}_{N}$ with initial data \eqref{data} converge to a limit $u$ locally in time, where $u$ solves \eqref{NLS}. 
This is addressed in
the following theorem.

\begin{theorem}\label{THM:main}
Let $k\in \N$ and $u_0$ be as in \eqref{data}. Let $\{u_N\}_{N}$ be the sequence of smooth solutions to the truncated equation \eqref{NLS_N}. Then, there exists $\ta>0$ such that for any $0<T\ll1$, there exists an event $\O_{T}\subset \O$ satisfying $\mathbb{P}(\O_{T}^{c})\leq C_{\ta}e^{-T^{-\ta}}$, such that for each $\o\in \O_T$, the sequence $\{u_N\}_{N}$ converges to a limit $u$ in $C([-T,T];H^{\frac{1}{2}-}(\mathbb{D}))$. 
The limit $u$ solves \eqref{NLS} and satisfies the flow property 
$u(t, u(s)) = u(t+s, u(0))$,
where we denote by $u(t,u_0)$ the solution to \eqref{NLS} with initial data $u_0$, evaluated at time $t$.
% The limit $u$ solves \eqref{NLS} and enjoys the flow property $u(t, u(s)) = u(t+s,u(0))$, where we denote the solution of \eqref{NLS} with initial data $u_0$ valued at time $t$ as $u(t,f)$.
\end{theorem}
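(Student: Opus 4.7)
The proof will follow the random averaging operator (RAO) philosophy of Deng-Nahmod-Yue \cite{DNY2}, adapted to the radial Dirichlet setting on $\D$. The fundamental obstruction is that for $k \geq 2$ the Gibbs data \eqref{data} lies below the probabilistic critical regularity \eqref{prob_scaling}, so a Da Prato-Debussche/Bourgain decomposition $u_N = S(t) \P_{\le N} u_0^{\o} + v_N$ cannot be closed by contraction: the high-low interaction between the Gaussian linear piece and the low-frequency modes is too strong to treat perturbatively.

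The plan is to build $u_N$ inductively along dyadic frequency shells in the ansatz $u_N = \psi_N + w_N$, where $\psi_N = \sum_{L \leq N,\, \text{dyadic}} \Psi_L$ is a rough random profile of regularity just below $\tfrac{1}{2}$ and $w_N$ is a smoother remainder at regularity above $s_{\textup{p}}(k)$. The piece $\Psi_L = \P_L \psi_N$ is defined through a linear Schr\"odinger equation of the schematic form
\begin{align*}
i \dt \Psi_L + \Dl \Psi_L = \P_L \mathcal{N}_L [u_N^{<L}; \Psi_L], \qquad \Psi_L(0) = \P_L u_0^{\o},
\end{align*}
where $u_N^{<L} = \P_{\leq L/2} u_N$ and $\mathcal{N}_L$ is linear in $\Psi_L$ and encodes the principal high-low resonant contribution from $|u_N|^{2k} u_N$. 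Because the equation is linear in $\Psi_L$, it defines a random linear propagator $H_L^N$ sending the Gaussian modes at frequency $\sim L$ to $\Psi_L$. The remainder $w_N$ then solves an equation whose nonlinearity consists of the multilinear expressions in $\psi_N$ and $w_N$ left over after the resonant contribution has been absorbed into $\psi_N$. One runs a simultaneous fixed point for $(\psi_N, w_N)$ in suitable $X^{s,b}$-type spaces.

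The analytic inputs needed are: (i) Strichartz and bilinear estimates for the radial disc Laplacian, built on \eqref{L4strich}, the eigenvalue asymptotics $\ld_n \sim n$, and the $L^p$-eigenfunction bounds from Lemma~\ref{LEM:efesti}; (ii) probabilistic multilinear bounds on Gaussian polynomials such as
\begin{align*}
\sum_{n_1, \dots, n_{2k+1}} c_{n_1 \dots n_{2k+1}} \, g_{n_1}(\o) \cdots g_{n_{2k+1}}(\o) \, e_{n_1}(x) \cdots e_{n_{2k+1}}(x),
\end{align*}
via Wiener chaos / hypercontractivity to convert moment bounds into exponential tail bounds; and (iii) operator-norm bounds on $H_L^N$, which reduce to bounds on random matrices indexed by eigenfunction labels. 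Chebyshev applied to (ii)--(iii) across dyadic scales produces the event $\O_T$ with $\mathbb{P}(\O_T^c) \leq C_{\ta} e^{-T^{-\ta}}$ on which uniform-in-$N$ bounds hold. Convergence of $\{u_N\}$ in $C([-T,T]; H^{\frac{1}{2}-}(\D))$ then follows by rerunning the same estimates on the differences $\psi_N - \psi_M$, $w_N - w_M$; passing to the limit in \eqref{Duhamel} shows the limit solves \eqref{NLS}, and uniqueness of the $\psi + w$ decomposition at any intermediate time yields the flow property.

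The main analytical obstacle, distinctive to $\D$, is the non-uniform $L^\infty$ bound \eqref{enLinfty} on the radial eigenfunctions. Unlike on $\T^2$, the pointwise size of the Gaussian initial data is strictly worse than its $L^p$ size for large $p$, as recorded in the $p$-dependent regularity $s(p)$ of \eqref{sp}. Consequently, every random multilinear estimate must be run in $L^p$-based spaces with exponents dictated by how many of the $2k+1$ factors are co-located in $x$, and the random operator bounds on $H_L^N$ must absorb the associated loss. Fitting this loss inside the margin provided by $s_{\textup{p}}(k) = \frac{1}{2} - \frac{3}{4k}$, which is tighter than the corresponding $\T^2$ probabilistic threshold, is where the combinatorics of the RAO scheme will need to be rebalanced, and this is where I expect the hardest work to lie.
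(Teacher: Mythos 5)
Your proposal correctly identifies the overall framework --- a Deng--Nahmod--Yue-style ansatz built inductively along dyadic shells, with a rough random profile solving a linear Schr\"{o}dinger equation and a smoother remainder controlled by $X^{s,b}$-type estimates and Gaussian chaos/eigenfunction bounds --- and this is indeed the skeleton of the paper's proof. However, you are missing the central new ingredient that makes the disc case tractable, and without it your plan would stall at a specific point.

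You write that the operator-norm bounds on $H_L^N$ ``reduce to bounds on random matrices indexed by eigenfunction labels,'' i.e.\ you envision controlling a genuinely non-diagonal random kernel in Fourier space, as in the original RAO analysis on $\T^2$. The paper's key observation is that one should isolate \emph{only} the single resonance $n=n_1$ in the high-low-to-high interaction (hence the name ``random \emph{resonant} operator''). Once this is done, the equation \eqref{Psi} for the profile decouples across spatial frequencies and is solved \emph{explicitly}: by Lemma~\ref{LEM:unitary}, $H_n^{N,L}(t)$ is a pure phase, $|H_n^{N,L}(t)|=1$, so that $\psi_{N,L}(t)$ differs from $\P_N u_0^\o$ by independent rotations and in particular $\textup{Law}(\psi_{N,L}(t)) = \textup{Law}(\P_N u_0^\o)$ for all $t$. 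This diagonality and unitarity are what allow the RRO to be controlled \emph{entirely} in physical space via probabilistic Strichartz estimates (Lemma~\ref{LEM:TypeC}), and they replace the random-matrix machinery by a one-line bound. This also means the paper does not run the simultaneous fixed point for $(\psi_N, w_N)$ you propose: $\psi_{N,L}$ is given by an explicit formula, and only the remainder $z_N$ requires the induction/continuity argument of Proposition~\ref{mainprop}.

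The gap is not merely cosmetic. A Fourier-side random-matrix bound on a non-diagonal $H_L^N$ requires some frequency relation among $n,n_1,\dots,n_{2k+1}$ to constrain the kernel, but on the disc the Fourier convolution theorem fails: the eigenfunction correlation $c(\bar n)$ from \eqref{c_n} is not supported on the hyperplane \eqref{hyper}, only exhibits off-diagonal decay (Lemma~\ref{LEM:ev5}), and --- as the authors note explicitly --- a Fourier-space treatment of the RRO cannot afford any loss in the highest frequency $n$. Your plan to ``absorb the associated loss'' from the $L^\infty$ eigenfunction growth \eqref{enLinfty} inside the margin $s_{\textup{p}}(k)$ by rebalancing $L^p$ exponents is exactly where this would collapse, because a generic random-matrix bound will not see the cancellation that only the explicit unitary structure provides. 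In short, you have the right philosophy but you do not identify the single structural simplification --- diagonality and unitarity of the RRO coming from removing just the $n=n_1$ resonance --- that is the actual novelty of the proof, and without it the key operator estimate appears out of reach.
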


%
%\begin{theorem}
%\label{THM:main} 
%Then,
%the NLS \eqref{NLS} on $\mathbb{D}$ with initial data \eqref{data} is locally well-posed almost surely.
%More precisely,
%there exist $C, c > 0$ such that for each sufficiently small $\dl >0$,
%there exists a set $\O_\dl \subset \O$ with the following properties:
%\begin{itemize}
%    \item 
%    $\P(\O_\dl) < C e^{-\frac1{\dl^c}}$,
%    
%    \item For each $\o  \in \O_\dl $,
%    there exists a unique solution $u$ to \eqref{NLS} with $u|_{t=0} = u_0^\o$ given by the random Fourier series \eqref{data} in the class
%    \begin{align}
%        \label{solu}
%        S(t) u_0^\o + C([-\dl, \dl]; H^{1 - \g} (\mathbb{D})) \subset C([-\dl, \dl]; L^2 (\mathbb{D})) ,
%    \end{align}
%\end{itemize}
%
%\noi
%where $0<\g \ll1$.
%\end{theorem}
If an event $S$ on the probability space $(\O, \F, \mathbb{P})$ happens with probability $\mathbb P (S) \ge 1-C_\theta e^{-A^{\theta}}$ for some quantity $\theta > 0$ and $C_\theta$ is independent of $A$,
we say this event is $A$-certain. 
Then, Theorem \ref{THM:main} says that \eqref{NLS} with initial data \eqref{data} is $T^{-1}$-certainly locally well-posed on $[- T, T]$,
which means that the NLS \eqref{NLS} is almost surely locally well-posed with the random initial data \eqref{data}.

\begin{remark}\rm 
\label{flow_property}
We remark that in Theorem \ref{THM:main}, we establish the flow property of the limiting dynamics, which was left open by Bourgain--Bulut \cite{BB2}.  
Let $\wt u_N$ denote the solution to \eqref{NLS} with truncated initial data 
$\wt u_N(0) = \P_{\leq N} u_0$.  
By a similar argument as in Theorem \ref{THM:main} and a globalization argument,  
we can show that $\wt u_N$ converges to $u$ globally in time, which is another point that was out of reach for Bourgain--Bulut \cite{BB2}.
\end{remark}

To construct almost sure global-in-time solutions,
we use Bourgain's invariant measure argument \cite{BO94, BO96, DNY2}.
More precisely,
we  use invariance of the Gibbs measure under the finite-dimensional flow \eqref{NLS_N} to obtain a uniform in $N$ control on these solutions.
Then, we apply a PDE approximation argument to extend the local-in-time solutions to \eqref{NLS} obtained from Theorem \ref{THM:main} to global-in-time ones.
As a byproduct,
we also obtain the invariance of the Gibbs measure under the resulting global flow of the NLS \eqref{NLS}.
The structure of our solutions, as discussed in the next section and in detail in Section~\ref{SEC:Ansatz}, is robust enough that this argument passes through and the limiting global flow has the group property. 

\begin{theorem}
\label{THM:main1}
For each $k\in \N$, the defocusing NLS \eqref{NLS} on $\mathbb{D}$ is globally well-posed almost surely with respect to the Gibbs measure $\rho_{k}$ in \eqref{Gibbs}.  Moreover, $\rho_k$ is invariant under these dynamics of \eqref{NLS}.
\end{theorem}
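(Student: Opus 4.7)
The plan is to combine the local well-posedness of Theorem \ref{THM:main} with Bourgain's invariant measure argument. First, I would fix the truncated Gibbs measures
\[
d\rho_{k,N} = Z_N^{-1} \exp\Bigl(-\tfrac{1}{2k+2}\int_{\D}|\P_{\leq N}u|^{2k+2}\,dx\Bigr)\, d\mu,
\]
and observe that \eqref{NLS_N} is a finite-dimensional Hamiltonian ODE, so by Liouville's theorem together with the conservation of $H_k(u_N)$ and $M(u_N)$, the flow $\Phi_N(t)$ leaves $\rho_{k,N}$ invariant. Moreover, since $\|\P_{\leq N}u\|_{L^{2k+2}(\D)} \to \|u\|_{L^{2k+2}(\D)}$ in $L^p(d\mu)$ for every $p<\infty$, the densities converge in $L^1(d\mu)$ and hence $\rho_{k,N}\to\rho_k$ in total variation as $N\to\infty$.

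Next, I would run Bourgain's iteration. Fix a large time horizon $A\gg 1$ and choose $T = T(A)$ to be selected below. By Theorem \ref{THM:main} (applied at the level of each $\wt u_N$ as in Remark \ref{flow_property}), for each initial datum $u_N(0)$ obtained from \eqref{data}, there is a good event $\Omega_{T,N}$ with $\mathbb{P}(\Omega_{T,N}^c) \leq C_\theta e^{-T^{-\theta}}$ on which $\{\wt u_M\}_{M\geq N}$ is Cauchy on $[-T,T]$ in $C_tH^{1/2-}$. Define
\[
\Sigma_{A,T,N} = \bigcap_{j=0}^{\lfloor A/T\rfloor} \Phi_N(-jT)\bigl(\Omega_{T,N}\bigr).
\]
Using the invariance of $\rho_{k,N}$ under $\Phi_N$ and a union bound, $\rho_{k,N}(\Sigma_{A,T,N}^c) \leq \frac{A}{T}\,C_\theta e^{-T^{-\theta}}$. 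Choosing $T\sim(\log A)^{-1/\theta}/2$ yields $\rho_{k,N}(\Sigma_{A,T,N}^c) \leq e^{-(\log A)^{2}}$, uniformly in $N$. On this event, iterating the local estimate on consecutive intervals $[jT,(j+1)T]$ gives $\wt u_N \to u$ in $C([-A,A];H^{1/2-}(\D))$, and $u$ solves \eqref{NLS}.

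Taking $A\to\infty$ along a dyadic sequence and applying Borel--Cantelli, combined with $\rho_{k,N}\to\rho_k$ in total variation, yields a full-measure set with respect to $\rho_k$ on which the global solution $u(t)$ exists in $C(\R;H^{1/2-}(\D))$; the group property $u(t+s,u_0) = u(t,u(s,u_0))$ follows from the local group property of Theorem \ref{THM:main} together with uniqueness within the class of limits of $\wt u_N$. For invariance, let $F\in C_b(H^{1/2-}(\D))$ and $t\in\R$. Then
\[
\int F(\Phi(t)u)\,d\rho_k = \lim_{N\to\infty}\int F(\Phi_N(t)u)\,d\rho_{k,N} = \lim_{N\to\infty}\int F(u)\,d\rho_{k,N} = \int F(u)\,d\rho_k,
\]
where the first equality uses dominated convergence together with the strong convergence $\Phi_N(t)u\to\Phi(t)u$ $\rho_k$-a.s. and $\rho_{k,N}\to\rho_k$ in total variation, the second uses invariance of $\rho_{k,N}$ under $\Phi_N$, and the third is the convergence of measures again.

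The main obstacle is the iteration in Bourgain's argument: the local theory in Theorem \ref{THM:main} is stated only for initial data drawn from the Gaussian free field \eqref{data}, whereas in the iteration the initial data at time $jT$ is $\Phi_N(jT)\P_{\leq N}u_0^\omega$, which is distributed according to $(\Phi_N(jT))_*\rho_{k,N} = \rho_{k,N}$ rather than the pushforward of $\mu$. To make the step work one needs the local theory to hold $\rho_{k,N}$-a.s.\ uniformly in $N$, with the same probability tail $C_\theta e^{-T^{-\theta}}$; this requires the random averaging operator ansatz of Section \ref{SEC:Ansatz} to be stable under the weighting by $\exp(-\frac{1}{2k+2}\|\P_{\leq N}u\|_{L^{2k+2}}^{2k+2})$, which, in view of the uniform integrability of the density, reduces to verifying that all stochastic estimates underlying Theorem \ref{THM:main} hold $\mu$-certainly with a uniform-in-$N$ tail.
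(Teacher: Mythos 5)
Your outline correctly reproduces the skeleton of Bourgain's invariant measure argument: truncated Gibbs measures $\rho_{k,N}$, invariance under $\Phi_N$, the union bound over time intervals of length $T$, the choice $T = T(A)$, and Borel--Cantelli plus $\rho_{k,N}\to\rho_k$ to pass to the limit. The paper omits this proof and points to \cite[Sections 5.3 \& 6]{DNY2}, so there is no in-paper argument to compare against line by line, but your overall architecture matches what is intended.

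You correctly identify the real difficulty in your final paragraph, but your proposed resolution of it does not work. The problem at time $jT$ is not primarily that the law of the data is $\rho_{k,N}$ rather than $\mu$ --- absolute continuity and uniform integrability of the densities $e^{-\frac{1}{2k+2}\|\P_{\le N}u\|_{L^{2k+2}}^{2k+2}}$ do handle the transfer of tail estimates between those two measures. The genuine obstruction is \emph{structural}: Theorem~\ref{THM:main} is proved via the random resonant operator ansatz of Section~\ref{SEC:Ansatz}, which requires the data to be a Gaussian Fourier series $\sum_n \frac{g_n(\o)}{\pi\ld_n}e_n$ with independent $\{g_n\}$, so that the decomposition $y_N = \psi_{N,L_N} + z_N$ in \eqref{decom1} and the conditional Wiener chaos estimates (Lemma~\ref{LEM:condchaos}) make sense. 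At time $jT$ the function $u_N(jT)$ is a nonlinear functional of $\{g_n\}$; it is \emph{not} of that form, and this is a property of the random variable, not just of its law, so no amount of absolute continuity can fix it. What actually makes the iteration close is the structural propagation that the paper alludes to (``the structure of our solutions \ldots\ is robust enough that this argument passes through''): by Lemma~\ref{LEM:unitary} the multiplier $H^{N,L}_n(t)$ is unitary and $\mathcal B_{\le L}$-measurable, so $\mathrm{Law}(\psi_{N,L}(t)) = \mathrm{Law}(\P_N u_0)$ for all $t$ and the $\psi_{N,L}(jT)$ retain the independent-Gaussian-coefficient structure (Type~(C) data in the sense of Section~\ref{SUB:reform}), while the a priori bound \eqref{mainz} keeps $z_N(jT)$ smooth. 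The local theory must be, and is, formulated for this class of structured data rather than only for fresh samples of $\mu$, and it is \emph{this}, not uniform integrability, that closes the iteration.

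A secondary remark: in your definition $\Sigma_{A,T,N} = \bigcap_j \Phi_N(-jT)(\Omega_{T,N})$ you apply the phase-space flow $\Phi_N$ directly to the subset $\Omega_{T,N}$ of the sample space $\Omega$; you should first push $\Omega_{T,N}$ forward to a set of initial data via $\o\mapsto u_0^\o$ before composing with $\Phi_N$. This is a standard abuse of notation but worth stating explicitly, particularly since the measure estimate then reads as a statement about $\rho_{k,N}$ on phase space rather than $\mathbb P$ on $\Omega$.
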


\noi
As the proof of this result is standard by now, we omit it. See
 \cite[Sections 5.3 \& 6]{DNY2} for details.
We emphasis that Theorem~\ref{THM:main} is new for all nonlinearities of quintic and higher $(k\geq 2)$, and thus completes the program initiated by Tzvetkov~\cite{Tz1, Tz2} on the invariance of Gibbs measures for defocusing NLS \eqref{NLS} on $\D$.

% \begin{remark}
% By following \cite[Section ]{DNY2} the stronger stability of 
% \end{remark}

\begin{remark}\rm \label{RMK:focus}

Our local well-posedness result Theorem~\ref{THM:main} also holds for the focusing NLS \eqref{NLS} as the proof is insensitive to the sign of the nonlinearity. However, the global result in Theorem~\ref{THM:main1} does not extend due to measure construction issues, as we now discuss.  
In the focusing case, the Gibbs measure is formally given by 
\begin{align*}
d \rho_{k} ``="Z^{-1}  e^{\frac1{2k+2} \int_{\D} |u|^{2k+2} dx } d\mu. 
% \label{rhokf}
\end{align*}
The main difficulty compared to the defocusing case is the unboundedness of the partition function $Z$ due to the bad sign in the density $e^{\frac1{2k+2} \int_{\D} |u|^{2k+2} dx }$. In this case, following \cite{LRS}, we need to introduce a mass cut-off and study the resulting measures:
\begin{align*}
d\rho_{k,R}  =Z_{k,R}^{-1} \ind_{\{M(u)\leq R\}}e^{\frac1{2k+2} \int_{\D} |u|^{2k+2} dx } d\mu,
\end{align*}
for $R>0$. Unlike the defocusing case, there is a phase transition in the construction of these measures as $k$ varies.  In the subcritical regime $k<1$, Tzvetkov \cite{Tz1} proved that $\rho_{k,R}$ is normalizable, in the sense that 
\begin{align*}
Z_{k,R}:=\E_{\mu}\big[  \ind_{\{M(u)\leq R\}}e^{\frac1{2k+2} \int_{\D} |u|^{2k+2} dx }\big] <+\infty
\end{align*}
for any finite $R>0$. In the critical case $k=1$, Oh-Sosoe-Tolomeo~\cite{OST} showed that $Z_{1,R}<\infty$ if $R<\|Q\|_{L^{2}(\R^2)}$, while if $R>\|Q\|_{L^{2}(\R^2)}$, then $Z_{1,R}=+\infty$. Here, $Q$ is the optimiser for the relevant Gagliardo-Nirenberg-Sobolev inequality, see \cite[(1.5)]{OST}. This result in \cite{OST} identified the the threshold for normalizability of $\rho_{k,R}$, where Bourgain-Bulut \cite{BB2} had previously established this for $R$ sufficiently small. It would be natural to expect that, in the super-critical case $k>1$, $Z_{k,R}=+\infty$ for any $R>0$, although we are not aware of a proof of this in the literature.
\end{remark}

In the next section, we discuss the key ingredients for the proof of Theorem \ref{THM:main}. 

\subsection{Ingredients of the proof}

Comparing to the previous works \cite{Tz1,Tz2,BB2}, the new ingredient in proving Theorem \ref{THM:main} is a finer description of the solution in the spirit of the random averaging operators of \cite{DNY2}.

One of the fundamental ideas for constructing solutions with random initial data below the scaling critical regularity is to devise an ansatz for the structure of the solution. For the cubic NLS on $\T^2$, Bourgain \cite{BO96} used the first order expansion
\begin{align}
u(t) = S(t) u_0 + v(t), \label{First}
\end{align}
which re-centers the solution $u$ around the random linear solution $S(t)u_0$. The idea here is that, by exploiting probabilistic effects, one can show that the remainder $v$ is smoother and is has a scaling sub-critical space regularity. Expansions of the form \eqref{First} have been successfully applied in a variety of situations \cite{BO97, BT1, BT2, OhWhite2, CO, NORS, NS, ST1}, for example.  Moreover, the ansatz \eqref{First} can be expanded by adding further explicit stochastic objects to the expansion \cite{BOP, OPT, OTW, CFU}, corresponding to the higher order Picard iterates, in order to eventually have the remainder terms smooth enough. However, this procedure breaks down as soon as the Picard iterates no longer display any smoothing property. For the cubic NLS \eqref{NLS} on $\mathbb{S}^2$, it was shown in \cite[Theorem 1.1]{BCST1} that the second Picard iterate does not enjoy any smoothing almost surely, no matter how regular the random initial data. This result dramatically highlighted the influence of the geometry, as there is smoothing when on the flat torus $\T^2$ \cite[Theorem 1.2]{BCST1}.
Thus, more refined ansatzes have been developed in recent years to go beyond the Picard iteration scheme and capture the essential bad interactions. See \cite{Bring0, CGI, GKO2, DNY2, DNY3, DNY4, LW24, ST2, BCST2}. 

In their breakthrough work regarding the invariance of the Gibbs measure under the flow of \eqref{NLS} on $\T^2$, Deng-Nahmod-Yue \cite{DNY2} introduced the random averaging operator ansatz (RAO), which isolates the bad high-low interactions in the nonlinearity and adds them as a ``low-frequency" operator acting on the high-frequency random initial data. Formally, the ansatz is of the form
\begin{align}
u(t)  =S(t)u_0^{\o} + \sum_{N\in 2^{\N}} \sum_{L\ll N} \mathcal{H}^{N,L}[\P_{N}S(t)u^{\o}_0](t)  +z(t) \label{RAO}
\end{align}
where the RAO operator $\mathcal{H}^{N}$ at frequency $N$ involves the truncated solutions $u_{L}$ at frequencies $L\ll N$, while it acts on the random linear solution at high frequency. 
The RAO term preserves the probabilistic independence structure between the high and low frequencies and removes the bad interactions from the remainder term. Consequently, it is then possible to show that the remainder enjoys a smoothing effect. 
The RAO ansatz was further developed by Deng-Nahmod-Yue into their broad theory of random tensors \cite{DNY3}.

We return now to our geometric setting of NLS \eqref{NLS} on the disc. To motivate our ansatz, we consider the first Picard iterate
\begin{align}
u^{(1)}(t)  = \int_{0}^{t} e^{i(t-t')\Dl} ( | e^{it' \Dl} u^{\o}_0|^{2k} e^{it'\Dl}u^{\o}_0 ) dt'. \label{Picard1}
\end{align}
Taking a Fourier transform in space, we get
\begin{align}
e^{it\ld_{n}^2}\F\{ u^{(1)}\}(t,n) = \sum_{n_1,\ldots, n_{2k+1}} \prod_{j=1}^{2k+1} \ft{u^{\o}_0}(n_j)^{\iota_j} c(n,n_1,n_2,\ldots, n_{2k+1}) \int_{0}^{t} e^{-it' \Phi(\cj{n})}dt', \label{Picard2}
\end{align}
where we defined the correlation of eigenfunctions
\begin{align}
c(\cj{n})=c(n,n_1,\ldots, n_{2k+1}) &= \int_{\mathbb{D}}e_{n}(x) \prod_{j=1}^{2k+1}e_{n_j}(x) dx,\label{c_n} 
\end{align}
and the phase function 
\begin{align}
\Phi(\cj{n})=\Phi(n,n_1,\ldots, n_{2k+1}) = \ld_{n}^2 - \ld_{n_1}^{2} + \ld_{n_2}^{2} - \ldots -\ld_{n_{2k+1}}^2,\label{phase}
\end{align} 
and $\iota_j$ denotes conjugation if $j$ is odd (and plays no role otherwise). By symmetry, we may assume that $n_1 = \max_{j=0,\ldots,k} n_{2j+1}$ and $n_2 =  \max_{j=1,\ldots,k}n_{2j}$.\footnote{Recall that on $\mathbb{D}$, the frequencies are non-negative.} We expect that the worst interactions occur whenever $\Phi(\cj{n})$ is small (resonant case) since there is little gain from the time integral in \eqref{Picard2}. Due to the signs between the terms $(\ld_n, \ld_{n_1}, \ld_{n_2})$ in \eqref{phase}, the smallness of $\Phi(\bar n)$ roughly corresponds to the case when $n_1 \gg n_2$, which is the high-low interaction. 
Then, we hope to minimise the loss in summation through counting estimates over the restricted region $\{ |\Phi(\cj{n})| \leq 1\}$. 
 
 On $\T^d$, the counting estimates are complemented with the additional hyperplane restriction 
\begin{align}
n=n_1-n_2+\ldots + n_{2k+1} \label{hyper}
\end{align}
as the correlation of eigenfunctions is trivial:
\begin{align*}
\int_{\mathbb{T}^{d}} \cj{e_{n}(x)} e_{n_1}(x) \cj{e_{n_2}(x)} \cdots e_{n_{2k+1}}(x) dx = \dl_0 (n- n_1+n_2-\ldots - n_{2k+1}).
\end{align*}
Indeed, \eqref{hyper} leads to the Fourier convolution theorem. At an analytical level, \eqref{hyper} forces a relationship between the frequencies. 
The Fourier convolution theorem fails on $\D$ since $c(\cj{n})$ in \eqref{c_n} is not supported on \eqref{hyper}. However, $c(\cj{n})$ does enjoy a kind of strong decay away from the hyperplane when $|n-n_1| \gg n_2$. 
For such off-diagonal decay on a general compact manifold without boundary, we refer the reader to \cite[Section 4]{Hani}. 
For a more explicit decay rate in the case of $\mathbb{D}$, see Lemma \ref{LEM:ev5} in the following.

 Motivated by the RAO ansatz \eqref{RAO} form \cite{DNY2}, we devise the ansatz
\begin{align}
u(t) = S(t) u_0^{\o} + \sum_{N\in 2^{\N}} \sum_{L\ll N} \mathcal{R}^{N,L}[ \P_{N}S(t)u_0^{\o}](t) +\wt{z}(t) \label{ansatzD}
\end{align}
where 
\begin{align*}
\mathcal{R}^{N,L}[ \P_{N}S(t)u_0^{\o}] = \wt{\psi}_{N,L}-\wt{\psi}_{N,L/2}
\end{align*}
with $\wt{\psi}_{N,L}$ the solution to the equation
\begin{align}
\wt{\psi}_{N,L} = \P_N S(t)u_0 - (k+1) i  \int_0^t  \sum_{n\in E_{N}}e_n  \jb{ \mathcal{N}( \jb{\wt{\psi}_{N,L}, e_n}e_n, u_{L}, \ldots, u_{L})(t')     ,e_n} dt',  \label{psiintro}
\end{align}
and 
\begin{align*}
\mathcal{N}(u_1,u_2,\ldots, u_{2k+1}) : = u_1 \cj{u_2} u_3\cdots u_{2k+1}.
\end{align*}
Notice that \eqref{psiintro} corresponds to isolating the interaction $n=n_1$ occurring when the first input function is the (frequency localized) random linear solution. 
It is remarkable that, in our setting, this precise interaction is the sole impediment to smoothing! 

Moreover, the removal of a single frequency leads to a number of simplifications compared to \cite{DNY2, LW24}, which we can leverage.
 First, this decomposes the frequencies on the right-hand side of \eqref{psiintro} allowing us to solve \eqref{psiintro} \textit{explicitly}. We find that
\begin{align}
\wt{\psi}_{N,L}(t)  =\sum_{n \in E_{N} \setminus E_{N/2}} e^{-it\ld_{n}^{2}} e^{i \Theta^{N,L}_{n}(t;\o)} \jb{u_0^{\o}, e_n} e_n
\end{align}
for some \textit{real-valued} function $\Theta^{N,L}_{n}(t;\o)$, which for $n\in E_{N}\setminus E_{N/2}$, are probabilistically independent from $\jb{u_0^{\o},e_n}$. See Lemma~\ref{LEM:unitary}. In particular, $\wt{\psi}_{N,L}(t)$ has the same $H^{\frac 12-}$-regularity as that of the initial data \eqref{data} almost surely.
Second, it is then clear that 
\begin{align}
\text{Law}( \wt{\psi}_{N,L}(t)) = \text{Law}( \P_{N}u_0^{\o})
\label{Lawinv}
\end{align}
for any $t\in \R$.  
As $\mathcal{R}^{N,L}$ corresponds to removing a \textit{single} high frequency, we refer to it as a \textit{random resonant operator} (RRO). For the detailed construction of our ansatz, see Section~\ref{SEC:Ansatz}.
We remark that the authors in \cite{BCST2} also used an ansatz of a similar flavour in studying the cubic NLS on $\mathbb{S}^2$ with rough Gaussian random initial data where the law equivalence for their random averaging operator akin to \eqref{Lawinv} played an important role.

The convergence of the summations in \eqref{ansatzD} is developed through an induction on frequencies argument as in \cite{DNY2}. However, we observe some key new features. 
Leveraging that the RRO essentially only differs from the random initial data by independent phase rotations, we can prove probabilistic Strichartz estimates for the terms $\wt{\psi}_{N,L}$ (Lemma~\ref{LEM:TypeC}) with good frequency decay. The upshot is that this allows us to handle the estimates for the RRO \textit{entirely} in physical space!  
We were motivated by \cite{BCST2} to use physical space estimates as much as possible in the geometric setting and we attest that whilst it is possible to control the RRO in Fourier space, the lack of the Fourier convolution theorem presents serious difficulties as we cannot incur any loss in the highest frequency $n$. 
The remainder $\wt{z}$ in \eqref{ansatzD} includes all of the remaining interactions which we show belongs to $H^{\s}(\D)$ for some $s_{\text{crit}}(k)<\s<1$. We handle most of the cases through physical space estimates and gains from the phase function \eqref{phase}. This allows us to reduce to the most difficult situations which do require Fourier side computations to leverage multilinear probabilistic effects.

The rest of the paper is organized as follows.
In Section \ref{SEC:Pre}, we recall some basic definitions and tools; including estimates on the eigenfunctions of the Dirichlet Laplacian on $\mathbb{D}$ and a counting estimate. We then set up the solution ansatz in Section~\ref{SEC:Ansatz} and prove Theorem~\ref{THM:main} assuming the validity of Proposition~\ref{mainprop}. 
The remaining two sections are devoted to the proof of Proposition~\ref{mainprop}. In Section \ref{SEC:RRO} we control the random resonant operator (RRO), which relies on the physical space estimates.
The physical space estimates for the input functions appearing in the RRO ansatz are also studied in that section.
Finally, Section \ref{SEC:LWP} deals with the smoother remainder term.

\section{Preliminary}
\label{SEC:Pre}

In this section,
we recall some standard tools that will be used throughout the paper.

\subsection{Notation}
We use $ A \les B $ to denote an estimate of the form $ A \leq CB $. 
Similarly,  $ A \sim B $ denotes $ A \les B $ and $ B \les A $, and $ A \ll B $ denotes $A \leq c B$ for small $c > 0$.

We define constants $b,b',b_{-}$ according to the following hierachy:
\begin{align}
0  < \tfrac{1}{2}-b'\ll b-\tfrac 12 \ll b_{-}-\tfrac{1}{2} \label{constants}
\end{align}
We also define constants $\g$ and $\kk$ such that
\begin{align}
0<10\kk< b-b'  \ll \g \ll \tfrac12 -b'<\tfrac{1}{10k+1}. \label{gamma}
\end{align}
%$b-b' \ll \g$ comes from interpolation exponents in remainder want to pick $\gamma \sim 100 R (b-b')$ where $R\sim 100k$ there for $Y^{0,1}$. Want $\g<\frac{1}{2}-b'$ so that J=2 case the at least one $\tau_j$ case to be enough gain
% We also use  $ a+ $ (and $ a- $) to mean  $ a + \eps $ (and $ a-\eps $, respectively)
%  for arbitrarily small $ \eps >0 $.
Given $x\in \R$, we let $[x]$ denote the greatest integer less than or equal to $x$.
When we work with space-time function spaces, we may use short-hand notations such as
 $C_T H^s_x  = C([0, T]; H^s(\mathbb{D}))$. We use $t, t'$ to denote time variables and $\tau, \tau', \tau_i$ to denote Fourier variables of time.
We will abuse the notation $\ft u$, which may denote the Fourier transform in space-time, such as $\ft u (n,\tau)$, or the Fourier transform only in space of $u$, such as $\ft u(n,t)$, depending on the context. 
 Given a finite set of dyadic numbers $\{N_{j}\}$, we use $N_{(j)}$ to denote the $j$-th largest number among them.
We also use $n_{(j)}$ to denote the $j$-th largest number among a finite list of natural numbers $\{n_j\}$.
Given a finite set $A$, we will use $|A|$ to denote the cardinality of $A$. We use $\ind_{A}$ to represent the indicator function of the set $A$.

% Let $(\O, \F, \mathbb{P})$ be a probability space.
% We say that a statement $S$ about a random variable holds $A$-certainly if $\mathbb{P}(S) \geq 1 - C_{\ta}e^{-A^{\ta}}$ for some $\ta>0$ and $C_{\ta}>0$ independent of $A$.

For a complex number $z$, we define $z^{+}=z$ and $z^{-}=\cj{z}$. For a finite index set $A$, we use the notation $z^{\iota_j}$ for $\{\iota_j\}_{j\in A}$ with $\iota_j\in \{\pm\}$. With a slight abuse of notation, we will identify $\iota\in \{\pm\}$ with the corresponding real number $\{\pm 1\}$.
We say that the pair $(k_i, k_j) \in \N^2$ for $i,j\in A$, is a \textit{pairing} in the variables $k_{A}$, if $k_{i}=k_{j}$ and $\iota_i +\iota_j =0$. We say that a pairing is an \textit{over-pairing} if $k_{i}=k_{j}=k_{\l}$ for some $\l\in A\setminus \{i,j\}$ and call a pairing \textit{simple} if it is not an over-pairing. 

% For any $N$,
% we denote by $\mathcal B_{\le N}$ the $\sigma$-algebra generated by the random variable $g_k$ for $\jb{k} \le N$, and by $\mathcal B_{\le N}^+$ the $\sigma$-algebra consists of both $\mathcal B_{\le N}$ and the $\sigma$-algebra generaged by the random variables $|g_k|^2$ for $k \in \Z$.

% \subsection{Fourier restriction norms}
% In this subsection,
% we recall the definitions and properties of the so called Fourier restriction norm space
% $X^{s,b}$-space,
% which consists of space-time functions that behave like linear Schr\"odinger evolution.

% \begin{definition}
% [$X^{s,b}$-spaces]
% \label{DEF:Bourgain}
% For any $s,b \in \R$
% and $u: \R \times \T^2 \to \R$,
% we define the $X^{s,b}$-norm by
% \begin{align}
%     \label{Bou-norm}
%     \|u\|_{X^{s,b}} = \| \jb{n}^s \jb{\tau - |n|^2}^b \ft u_n (\tau) \|_{L_\tau^2 \l_n^2 (\R \times \Z^2)}
% \end{align}

% \noi
% For any interval $I$,
% we define the corresponding localized norms
% \begin{align}
%     \label{loc-norm}
%     \|u\|_{X^{s,b}(I)} = \inf \big\{ \|v\|_{X^{s,b}} ; v(t) |_{I} =u(t) \big\}.
% \end{align}
% \end{definition}

\subsection{Norms}
\label{SUB:norms}

%
%Let $(B,C)$ be a partition of $A$, 
%namely $B\cap C=\emptyset$ and $B\cup C=A$.
%Let $H$ be a function of $n_A = \{(n_i); i\in A\}$ with $n_i \in \Z_+$.
%We define the norm $\|H_{n_A}\|_{n_B\to n_C}$ 
%such that
%\begin{equation}
%\label{norms:op}
%\| H_{n_A} \|_{n_B\to n_C}^2
%=\sup_{\| z_{n_B}\|_{\l^2_{n_B}} =1} \Bigg\{\sum_{n_C}\bigg|\sum_{n_B} H_{n_A}z_{n_B}\bigg|^2 \Bigg\}.
%\end{equation} 
%
%
%\noi
%For example, if $H = H_{nn_1n_2n_3}$, then
%\begin{align*}
%\big\| & H_{nn_1n_2n_3}  \big\|_{n_1n_2n_3 \to n}^2  = \sup_{\|z_{n_1n_2n_3} \|_{\l^2_{n_1n_2n_3}} =1} \Bigg\{
%\sum_{n\in \Z} \bigg| \sum_{n_1,n_2,n_3 \in \Z}
%H_{nn_1n_2n_3} 
%z_{n_1n_2n_3}
%\bigg|^2  
%\Bigg\}.
%\end{align*}
%
%\noi
%When either $B = \emptyset$ or $C = \emptyset$, we have
%\[
%\| H_{n_A} \|_{n_B\to n_C} = \| H_{n_A} \|_{n_A} = \bigg(\sum_{n_A \in \Z^{|A|}_+} |H_{n_A}|^2 \bigg)^{\frac12} 
%\]
%
%\noi
%One direct consequence from the defition is that, given a partition $(B,C)$ of $A$, we have
%\begin{align}
%    \label{opbound}
%    \| H_{n_A} \|_{n_B\to n_C} \le  \| H_{n_A} \|_{n_A} .
%\end{align}

For $s, b \in \R$, we define a norm on space-time functions adapted to $L^{2}_{\text{rad}}(\mathbb{D})$ by
\begin{equation}
\label{norms:Xsb}
\begin{aligned}
\|v\|_{Y^{s,b}}^2 &:=\int_\R\jb{\tau  }^{2b}\| \ld_{n}^{s} \widehat{v}(n,\tau) \|_{\l_n^2}^2\,d\tau  = \sum_{n\in \N} \ld_{n}^{2s}  \int_\R \jb{\tau  }^{2b} |\widehat{v} (n, \tau) |^2\,d\tau.
\end{aligned}
\end{equation} 
The norm in \eqref{norms:Xsb} is nothing but the Fourier restriction norm of the interaction representation of a function \cite{BO93}. More precisely, if $v=S(-t)u$, then 
\begin{align*}
\| v\|_{Y^{s,b}}  = \|S(-t)u\|_{Y^{s,b}} = \|\ld_{n}^{s} \jb{\tau+\ld_{n}^{2}}^{b} \ft u(\tau,n)\|_{\l^{2}_{n}L^{2}_{\tau}}=: \| u\|_{X^{s,b}} . %\label{YXsb}
\end{align*}

\noi
For any interval $I$, define the corresponding localized norms
\begin{equation*}
% \label{norms:localXsb}
\|v\|_{Y^{s,b}(I)}:=\inf\big\{\|w\|_{Y^{s,b}}: w=u\mathrm{\ on\ }I\big\}.
\end{equation*} 

\noi
By abusing notations, 
we will call the above $w$ an \emph{extension} of $u$ (from $I$ to $\R$).
We have the following important embedding: for any $s\in \R$ and $b>\frac 12$, we have 
\begin{align}
Y^{s,b}([-T,T]) \embeds C([-T,T]; H^{s}(\mathbb{D})).
\end{align}

Let $\chi\in C_{c}^{\infty}(\R)$ be a smooth non-negative cutoff function supported on $[-2,2]$ with $\chi\equiv 1$ on $[-1,1]$, and we set $\chi_{T}(t) :=\chi(T^{-1}t)$ for $T>0$. For later use, we also define $\wt{\chi}(\cdot) := \chi(\cdot) - \chi(2\cdot)$, which is a smooth bump function supported on $\frac 12 \leq |t|\leq 1$.
We then recall the following well-known result; see for instance \cite[Lemma 4.2]{DNY2}.

\begin{lemma}[Short time bounds]
\label{PROP:short} 
For any $0<T\leq 1$ and any $u \in Y^{s,b_1}([-T,T])$, we have
\begin{equation}
\label{localtime}
\| v\|_{Y^{s,b}([-T,T])}+\|\chi_{T} \cdot v\|_{Y^{s,b}}
\lesssim T^{b_1-b}\|v\|_{Y^{s,b_1}([-T,T])}
\end{equation} 

\noi
provided either $0<b \leq b_1<1/2$, or $u(0)=0$ 
and $1/2<b \leq b_1<1$. 
\end{lemma}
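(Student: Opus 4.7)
The plan is to reduce the estimate to a purely time-variable statement about multiplication by $\chi_T$ in fractional Sobolev spaces on $\R$, and then invoke (or directly verify) standard multiplier bounds. First, I pick an extension $w$ of $v$ from $[-T,T]$ to $\R$ with $\|w\|_{Y^{s,b_1}} \leq 2 \|v\|_{Y^{s,b_1}([-T,T])}$. Since $\chi_T \equiv 1$ on $[-T,T]$, the function $\chi_T \cdot w$ is also an extension of $v$, so by definition of the localized norm we have $\|v\|_{Y^{s,b}([-T,T])} \leq \|\chi_T w\|_{Y^{s,b}}$, and the second term is trivially bounded by the same quantity after replacing $v$ by $w$. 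Thus it suffices to establish
\[
\|\chi_T \cdot w\|_{Y^{s,b}} \lesssim T^{b_1-b} \|w\|_{Y^{s,b_1}}.
\]

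Next, I observe that the multiplication by $\chi_T(t)$ commutes with the spatial Fourier decomposition, and the norm \eqref{norms:Xsb} puts the $\ell_n^2$ structure (weighted by $\lambda_n^{2s}$) outside of the time Fourier weight $\jb{\tau}^{2b}$. By Plancherel in time, it is therefore enough to prove, for each fixed spatial coefficient $f(t) = \ft w(n,t)$, the scalar bound
\[
\|\chi_T \cdot f\|_{H^b_t(\R)} \lesssim T^{b_1-b} \|f\|_{H^{b_1}_t(\R)},
\]
and then square-sum in $n$ against $\lambda_n^{2s}$.

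For the regime $0 < b \leq b_1 < 1/2$, I would use the Gagliardo--Slobodeckij characterization
\[
\|\chi_T f\|_{H^b}^2 \sim \|\chi_T f\|_{L^2}^2 + \iint_{\R^2} \frac{|\chi_T(t) f(t) - \chi_T(t') f(t')|^2}{|t-t'|^{1+2b}} \, dt\, dt',
\]
split the numerator into $\chi_T(t)(f(t) - f(t'))$ and $(\chi_T(t) - \chi_T(t'))f(t')$, and estimate each piece using $\|\chi_T\|_{L^\infty}\lesssim 1$ and the scaling $\|\chi_T'\|_{L^\infty} \lesssim T^{-1}$ together with $\supp \chi_T \subset [-2T, 2T]$. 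The gain $T^{b_1-b}$ arises from the fact that $\chi_T$ localizes to an interval of length $T$ and that $b < b_1$ allows trading regularity for a power of $T$, e.g.\ via interpolation against the trivial $L^2$ bound $\|\chi_T f\|_{L^2} \leq \|f\|_{L^2}$.

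For the regime $1/2 < b \leq b_1 < 1$, multiplication by $\chi_T$ is \emph{not} uniformly bounded on $H^b$ without an additional vanishing condition: a naive computation produces a boundary term proportional to $|f(0)| \cdot T^{1/2-b}$, which blows up as $T \to 0$. This is exactly where the hypothesis $u(0) = 0$ (which, together with continuity of $f \in H^{b_1} \hookrightarrow C^0$ for $b_1 > 1/2$, gives $f(0) = 0$ after a suitable choice of extension) intervenes. With $f(0)=0$, I would write $f(t) = \int_0^t f'(s)\, ds$ (for $f \in H^1$, then extend by density to $H^{b_1}$) and verify that the dangerous boundary contribution vanishes, reducing to the same Gagliardo-type splitting as before. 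The main obstacle is precisely this boundary analysis in the high-$b$ regime; both regimes are standard and appear in the Bourgain-space literature (cf.\ Tao's textbook), so I would cite the relevant statement rather than redoing the computation in full.
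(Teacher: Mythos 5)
The paper itself does not prove this lemma; it simply cites \cite[Lemma 4.2]{DNY2}, so there is no in-paper proof to compare against. Your overall strategy is the standard one: pass to a near-optimal extension $w$ of $v$, observe that $\chi_T w$ is also an extension since $\chi_T\equiv 1$ on $[-T,T]$, and reduce the claim to the scalar time-frequency estimate $\|\chi_T f\|_{H^b_t}\lesssim T^{b_1-b}\|f\|_{H^{b_1}_t}$ (applied with $f=\ft w(n,\cdot)$ and square-summed against $\ld_n^{2s}$). That reduction is correct and is exactly how the $Y^{s,b}$ structure decouples the spatial variable. Your identification of the obstruction in the regime $b>1/2$ — a boundary term of size $|f(0)|\,T^{1/2-b}$ removed by $u(0)=0$ together with Hölder continuity $|f(t)|\lesssim |t|^{b_1-1/2}\|f\|_{H^{b_1}}$ — is also the right mechanism.

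The genuine gap is in the low-regularity regime $0<b\le b_1<1/2$, where you claim the factor $T^{b_1-b}$ comes ``via interpolation against the trivial $L^2$ bound $\|\chi_T f\|_{L^2}\le\|f\|_{L^2}$.'' That bound carries no power of $T$, so interpolating it with the (also $T$-uniform) bound $\|\chi_T f\|_{H^{b_1}}\lesssim\|f\|_{H^{b_1}}$ cannot produce a gain. The correct endpoint is the non-trivial estimate $\|\chi_T f\|_{L^2}\lesssim T^{b_1}\|f\|_{H^{b_1}}$, obtained by H\"older on the interval $\supp\chi_T\subset[-2T,2T]$ paired with the Sobolev embedding $H^{b_1}\hookrightarrow L^{2/(1-2b_1)}$; interpolating that against the $H^{b_1}\to H^{b_1}$ bound then yields $T^{b_1-b}$. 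Alternatively one can split $f$ into Fourier support $|\tau|\lesssim 1/T$ versus $|\tau|\gtrsim 1/T$: the high-frequency piece already carries $T^{b_1-b}$ from $\jb{\tau}^{b-b_1}\le T^{b_1-b}$, while the low-frequency piece is controlled by Bernstein-type bounds on $\|f_{\mathrm{low}}\|_{L^\infty}$ and the support of $\chi_T$. Either route closes the argument; the ``trivial $L^2$ bound'' as you stated it does not. The Gagliardo--Slobodeckij decomposition you propose can also be made to work, but the $T$-gain there must again come from the support localization combined with the Sobolev/H\"older endpoint, not from the plain $L^2$ contraction.

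Finally, note that deferring to Tao's textbook only covers the sub-$1/2$ range $-1/2<b'\le b<1/2$; the regime $1/2<b\le b_1<1$ with the vanishing condition $u(0)=0$ is the part specific to \cite{DNY2} and its predecessors, and would need to be cited or proved separately as you began to sketch.
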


% \subsection{Duhamel operator}
We define the truncated Duhamel operators
\begin{equation}
\label{duha}
\mathcal I_{\chi} f(t)=\chi(t)\int_0^t\chi(t') f(t')\,dt',
\end{equation} 
and let $\mathcal K(\tau, \tau')$ be the corresponding Fourier kernel defined by
\begin{equation}
\label{duhamelform}
\widehat{\mathcal I_{\chi} f}(\tau) =\int_{\R} \mathcal K(\tau,\tau') \widehat{f} (\tau')\,d\tau'.
\end{equation} 

\noi
Then, we have the following bounds on the kernels:

\begin{lemma}[\cite{DNY2,DNY3}]
\label{PROP:duhamel} 
For any $\al>0$ sufficiently large, it holds 
\begin{equation}
\label{duhamleker}
|\mathcal K (\tau,\tau')| + |\partial_{\tau}\mathcal K(\tau,\tau')|+|\partial_{\tau'}\mathcal K (\tau,\tau')|  
\les \frac{1}{\jb{\tau'}}\bigg( \frac{1}{\jb{\tau-\tau'}^{\al}}+\frac{1}{\jb{\tau}^{\al}}\bigg)\les  \frac{1}{\jb{\tau}\jb{\tau-\tau'}}
\end{equation}

\end{lemma}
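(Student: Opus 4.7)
The plan is to obtain the bound by repeated integration by parts in $t'$, leveraging that $\chi\equiv 1$ on $[-1,1]$. Reading off the kernel from \eqref{duha}--\eqref{duhamelform} gives the explicit representation
\[
\mathcal K(\tau,\tau')=\int_{\R}e^{-it\tau}\chi(t)\int_0^t\chi(t')e^{it'\tau'}\,dt'\,dt.
\]
The key observation is that $\chi(0)=1$ while $\chi^{(j)}(0)=0$ for all $j\geq 1$, so that upon integrating the inner integral by parts in $t'$, only the initial step produces a nonzero boundary contribution at $t'=0$. Iterating this $\alpha$ times produces the expansion
\[
\mathcal K(\tau,\tau')=\frac{\widehat{\chi^2}(\tau-\tau')-\widehat{\chi}(\tau)}{i\tau'}-\sum_{j=1}^{\alpha-1}\frac{\widehat{\chi\chi^{(j)}}(\tau-\tau')}{(i\tau')^{j+1}}+\frac{(-1)^{\alpha}}{(i\tau')^{\alpha}}R_\alpha(\tau,\tau'),
\]
where $R_\alpha(\tau,\tau')=\int_{\R}e^{-it\tau}\chi(t)F_\alpha(t)\,dt$ and $F_\alpha(t):=\int_0^t\chi^{(\alpha)}(t')e^{it'\tau'}\,dt'$.

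Since $\chi$, $\chi^2$, and $\chi\chi^{(j)}$ are Schwartz, their Fourier transforms decay faster than any polynomial, so each explicit term is bounded by $C_\alpha|\tau'|^{-1}\bigl(\jb{\tau-\tau'}^{-\alpha}+\jb{\tau}^{-\alpha}\bigr)$. For the remainder $R_\alpha$, since $\chi^{(\alpha)}$ is supported in $1\leq|t'|\leq 2$, we have $F_\alpha(t)=0$ for $|t|\leq 1$; moreover $|F_\alpha(t)|\leq\|\chi^{(\alpha)}\|_{L^1}\lesssim 1$ uniformly in $\tau'$, so $|R_\alpha(\tau,\tau')|\lesssim 1$ trivially. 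The resulting error bound $|\tau'|^{-\alpha}$ is absorbed into the desired form via the elementary inequality
\[
\frac{1}{\jb{\tau'}^\alpha}\lesssim\frac{1}{\jb{\tau'}}\bigg(\frac{1}{\jb{\tau-\tau'}^{\alpha-1}}+\frac{1}{\jb{\tau}^{\alpha-1}}\bigg),
\]
obtained by splitting according to whether $\jb{\tau-\tau'}\geq\jb{\tau'}/2$ or not, and noting $\jb{\tau}\sim\jb{\tau'}$ in the complementary region. The low-frequency case $|\tau'|\lesssim 1$, where the $t'$-IBP carries no benefit, is handled separately: the function $G(t):=\chi(t)\int_0^t\chi(t')e^{it'\tau'}\,dt'$ is smooth and compactly supported in $t$ with all $C^k$-norms uniformly bounded for $|\tau'|\lesssim 1$, so integration by parts in $t$ yields $|\widehat G(\tau)|\lesssim\jb{\tau}^{-\alpha}$, which, together with $\jb{\tau'}\sim 1$, gives the desired bound.

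For the derivatives, $\partial_\tau$ and $\partial_{\tau'}$ bring down factors of $-it$ and $it'\chi(t')$, both bounded by $2$ on the supports under consideration, and the same IBP scheme applies. The only subtlety is that for $\partial_{\tau'}$ one integrates $\varphi(t'):=t'\chi(t')$ by parts: here $\varphi(0)=0$ and $\varphi^{(j)}(0)=0$ for $j\geq 2$, while $\varphi'(0)=1$ produces one additional boundary contribution of the form $(i\tau')^{-2}\widehat\chi(\tau)$, which is admissible. The final inequality $\jb{\tau'}^{-1}(\jb{\tau-\tau'}^{-\alpha}+\jb{\tau}^{-\alpha})\lesssim\jb{\tau}^{-1}\jb{\tau-\tau'}^{-1}$ (for $\alpha\geq 2$) is elementary algebra, established by distinguishing the two regions $\jb{\tau}\gtrless\jb{\tau-\tau'}$ and using $\jb{\tau'}\lesssim\jb{\tau}+\jb{\tau-\tau'}$. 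The only point requiring care is the bookkeeping of boundary contributions in the iterated IBP, but the flatness of $\chi$ at the origin suppresses all but the leading one, making the argument clean.
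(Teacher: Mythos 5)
Your overall strategy (iterated integration by parts in $t'$, using that $\chi$ is flat at the origin so only the first step contributes a boundary term at $t'=0$) is the right idea, and the representation of $\mathcal K$ and the treatment of the explicit boundary terms $\widehat{\chi\chi^{(j)}}(\tau-\tau')/(i\tau')^{j+1}$ and $\widehat\chi(\tau)/(i\tau')$ are correct. The low-$|\tau'|$ case and the final deduction of $\lesssim \jb{\tau}^{-1}\jb{\tau-\tau'}^{-1}$ from the middle bound are also fine.

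However, there is a genuine gap in the treatment of the remainder term $R_\alpha$. The bound $|R_\alpha(\tau,\tau')|\lesssim 1$ gives only $\jb{\tau'}^{-\alpha}$ for the remainder contribution, which has \emph{no decay in $\tau$}. The ``elementary inequality'' you invoke,
\begin{equation*}
\frac{1}{\jb{\tau'}^\alpha}\lesssim\frac{1}{\jb{\tau'}}\Bigl(\frac{1}{\jb{\tau-\tau'}^{\alpha-1}}+\frac{1}{\jb{\tau}^{\alpha-1}}\Bigr),
\end{equation*}
is false: fix $\tau'$ bounded (say $\tau'=2$, outside your low-frequency regime) and send $\tau\to\infty$; the left side stays at a positive constant while the right side tends to $0$. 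Your derivation of it is also incorrect --- in the regime $\jb{\tau-\tau'}\geq\jb{\tau'}/2$ you get $\jb{\tau-\tau'}^{-(\alpha-1)}\lesssim\jb{\tau'}^{-(\alpha-1)}$, which is the wrong direction for the inequality you want. More conceptually, for fixed $\tau'$ the kernel $\mathcal K(\tau,\tau')$ is the Fourier transform in $t$ of a compactly supported smooth function, so it must decay rapidly in $\tau$; but your $t'$-expansion dumps this decay entirely into $R_\alpha$, which you have not exploited.

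The fix is to integrate $R_\alpha(\tau,\tau')=\int e^{-it\tau}\chi(t)F_\alpha(t)\,dt$ by parts in $t$ as well. Since $G(t):=\chi(t)F_\alpha(t)$ is supported in $1\leq|t|\leq 2$ and smooth, repeated $t$-IBP gives $|R_\alpha(\tau,\tau')|\lesssim\jb{\tau}^{-k}\|G^{(k)}\|_{L^1_t}$; here $\partial_t^j F_\alpha(t)=\partial_t^{j-1}(\chi^{(\alpha)}(t)e^{it\tau'})$ carries a factor $\jb{\tau'}^{j-1}$, so $\|G^{(k)}\|_{L^1_t}\lesssim\jb{\tau'}^{k-1}$. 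Taking $k=\alpha$ yields $|R_\alpha|\lesssim\jb{\tau}^{-\alpha}\jb{\tau'}^{\alpha-1}$, and the remainder contribution becomes $\jb{\tau'}^{-\alpha}\cdot\jb{\tau}^{-\alpha}\jb{\tau'}^{\alpha-1}=\jb{\tau'}^{-1}\jb{\tau}^{-\alpha}$, which is admissible. (In the symmetric region $|\tau|\lesssim|\tau'|$ you can use fewer $t$-IBP steps, or simply note that the naive bound $\jb{\tau'}^{-\alpha}$ already dominates $\jb{\tau'}^{-1}\jb{\tau-\tau'}^{-(\alpha-1)}$ there since $\jb{\tau-\tau'}\lesssim\jb{\tau'}$.) With this extra step the argument closes.
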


As a consequence of the first inequality in \eqref{duhamleker}, we have the following estimate.

\begin{lemma}[Duhamel estimate]
\label{LEM:duhamel}  
Let $\frac 12<b<1$,  $\mathcal I_{\chi} $ be the linear operator defined in \eqref{duha} ,and $v \in Y^{s, b-1
}$.
Then, we have
\begin{equation*}
\label{localtime1}
\| \mathcal I_\chi v \|_{Y^{s,b}}
\lesssim  \|v\|_{Y^{s,b-1}},
\end{equation*} 
\end{lemma}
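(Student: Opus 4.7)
The plan is to reduce the estimate to a purely $1$-dimensional (in time) operator bound. Because the $Y^{s,b}$ norm in \eqref{norms:Xsb} is a weighted $\ell^2_n L^2_\tau$ norm, the spatial factors $\ld_n^s$ and the sum over $n$ factor through the Duhamel operator $\mathcal I_\chi$ (which acts only in $t$). Thus it suffices to show, for each fixed $n$, that
\begin{equation*}
\bigl\|\jb{\tau}^{b}\,\widehat{\mathcal I_\chi v}(n,\tau)\bigr\|_{L^2_\tau}
\les \bigl\|\jb{\tau}^{b-1}\,\widehat{v}(n,\tau)\bigr\|_{L^2_\tau}.
\end{equation*}
By \eqref{duhamelform}, this is an $L^2 \to L^2$ bound for the integral operator with kernel
\begin{equation*}
K(\tau,\tau')= \jb{\tau}^{b}\,\mathcal K(\tau,\tau')\,\jb{\tau'}^{1-b},
\end{equation*}
so the task is reduced to proving that $K$ is bounded on $L^2(\R)$.

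Next, I would invoke the sharper form of the kernel bound from Lemma~\ref{PROP:duhamel}, namely $|\mathcal K(\tau,\tau')|\les \jb{\tau'}^{-1}\bigl(\jb{\tau-\tau'}^{-\al}+\jb{\tau}^{-\al}\bigr)$ with $\al$ as large as needed, and split $K = K_1 + K_2$ accordingly. For $K_1$, whose decay is off-diagonal, I would perform a dyadic case split: if $\jb{\tau'}\geq \tfrac{1}{2}\jb{\tau}$, then $\jb{\tau}^{b}\jb{\tau'}^{-1}\les \jb{\tau'}^{b-1}$, and Young's inequality $L^2 \ast L^1 \hookrightarrow L^2$ applies since $\jb{\cdot}^{-\al}\in L^1$; if $\jb{\tau'}<\tfrac{1}{2}\jb{\tau}$, then $\jb{\tau-\tau'}\sim \jb{\tau}$, so the kernel gains $\jb{\tau}^{b-\al}$, which lies in $L^2_\tau$ for $\al$ large, and the $\tau'$-integral is bounded via Cauchy--Schwarz using $\|\jb{\tau'}^{-(2-b)}\|_{L^2_{\tau'}}<\infty$ (here $b<1$). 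For $K_2$, the kernel factorizes as $\jb{\tau}^{b-\al}\jb{\tau'}^{-(2-b)}$, and by Cauchy--Schwarz in $\tau'$ against $\jb{\tau'}^{b-1}\widehat v(n,\tau')$ followed by the observation that $\jb{\tau}^{b-\al}\in L^2$ for $\al$ sufficiently large, one obtains the desired bound.

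Finally, I would resum in $n$ with the spatial weights $\ld_n^{2s}$ to recover the $Y^{s,\,\cdot}$ norms on both sides, completing the estimate. The only place where the hypothesis $b<1$ is truly used is in ensuring the integrability $\int \jb{\tau'}^{-2(2-b)}\,d\tau'<\infty$; the hypothesis $b>\tfrac12$ plays no direct role in the kernel estimate itself but is natural from the embedding $Y^{s,b}\embeds C_t H^s_x$. The main (mild) obstacle is the presence of the growing weight $\jb{\tau}^b$ in the output: any naive bound using only $|\mathcal K(\tau,\tau')|\les \jb{\tau}^{-1}\jb{\tau-\tau'}^{-1}$ from the second inequality in \eqref{duhamleker} fails at this regularity; exploiting the stronger, $\al$-decay form of Lemma~\ref{PROP:duhamel} is precisely what is needed to absorb $\jb{\tau}^b$.
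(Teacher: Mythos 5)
Your overall strategy is exactly what the paper intends: the lemma is stated in the paper without proof as a direct consequence of the first (off-diagonal) inequality in \eqref{duhamleker}, and your reduction to an $L^2_\tau\to L^2_\tau$ kernel bound, followed by a $K_1$--$K_2$ split and a dyadic case split on $\jb{\tau'}$ versus $\jb{\tau}$, is the right argument. Your observation that the weaker bound $|\mathcal K|\les\jb{\tau}^{-1}\jb{\tau-\tau'}^{-1}$ is insufficient (it fails in the regime $\jb{\tau'}\ll\jb{\tau}$) is also correct and worth noting.

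However, there is a systematic exponent slip. Writing $K(\tau,\tau')=\jb{\tau}^{b}\mathcal K(\tau,\tau')\jb{\tau'}^{1-b}$ as you set up at the start, the factor $\jb{\tau'}^{-1}$ in the kernel bound combines with $\jb{\tau'}^{1-b}$ to give $\jb{\tau'}^{-b}$, so that $|K_2(\tau,\tau')|\les\jb{\tau}^{b-\al}\jb{\tau'}^{-b}$ and, in the low-$\tau'$ regime of $K_1$, $|K_1(\tau,\tau')|\les\jb{\tau}^{b-\al}\jb{\tau'}^{-b}$ as well. Your written exponent $\jb{\tau'}^{-(2-b)}$ appears to come from applying the weight $\jb{\tau'}^{b-1}$ in the wrong direction (multiplying rather than dividing when passing from $\ft v$ to $g=\jb{\tau'}^{b-1}\ft v$). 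The consequence is that your final remark is reversed: the square-integrability you actually need is $\jb{\tau'}^{-b}\in L^2_{\tau'}$, which is precisely the hypothesis $b>\tfrac12$ (it is not about $b<1$). So $b>\tfrac12$ is used directly in the kernel estimate, specifically in the Cauchy--Schwarz step for $K_2$ and for $K_1$ in the regime $\jb{\tau'}\ll\jb{\tau}$; the hypothesis $b<1$ ensures $\jb{\tau}^{b-\al}\in L^2_\tau$ for any $\al>\tfrac32$ (this is mild) and, more fundamentally, is needed for the transfer/extension lemmas elsewhere. With the corrected exponent the rest of your argument goes through without change.
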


\subsection{Eigenfunction estimates}
In this section, we state and prove some facts about the eigenfunctions $\{e_n\}_{n\in \N}$ of the radial Laplacian on the unit disc with Dirichlet boundary conditions.

\begin{lemma}\cite[Lemma 2.1]{Tz1}
\label{LEM:efesti}
Given $2\leq p\leq \infty$, there exists $C, \wt{C}>0$ such that for every $n \ge 1$,
\begin{align}
    \label{efesti1}
    \begin{split}
    \|e_n\|_{L^p (\D)} & \le C \sigma_{p}(n) \| e_n\|_{L^2 (\D)} = C \sigma_{p}(n)\\
    \|e_n'\|_{L^p (\D)} & \le C \sigma_{p}(n) \| e_n'\|_{L^2 (\D)} \leq \wt{C}n \s_{p}(n) \|e_{n}\|_{L^{2}(\D)}, 
    \end{split}
    \end{align}
where $e_{n}'(r) = \tfrac{d}{dr} e_{n}(r)$ and
\begin{align*}
    \sigma_{p}(n) = 
    \begin{cases}
    1 & \textup{ when } \,\,\, 2\le p < 4,\\
    (\log (1+n))^{\frac14} & \textup{ when } \,\,\, p = 4,\\
    n^{-\frac2p + \frac12} & \textup{ when } \,\,\, p > 4.
    \end{cases}
    % \label{spn}
\end{align*}
%\noi
%In particular, 
%for every $\eps > 0$, there exists $C_\eps$ such that for every $n_1, n_2 \ge 1$,
%\begin{align}
%    \label{bilinear}
%    \|e_{n_1} e_{n_2} \|_{L^2 (\mathbb{D})} \le C_\eps \min (n_1, n_2)^\eps \|e_{n_1}  \|_{L^2 (\mathbb{D})} \| e_{n_2} \|_{L^2 (\mathbb{D})} \le C_\eps \min (n_1, n_2)^\eps.
%\end{align}
\end{lemma}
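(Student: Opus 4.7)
The plan is to reduce everything to weighted $L^p$ integrals of Bessel functions, using the standard asymptotics
\[
J_0(x) = \sqrt{\tfrac{2}{\pi x}}\cos(x - \tfrac{\pi}{4}) + O(x^{-3/2}), \qquad J_0'(x) = -J_1(x),
\]
where $J_1$ admits a similar asymptotic with a different phase shift. Both Bessel functions are smooth and bounded near the origin. After changing variables $u = \ld_n r$, the $L^p(\D)$ norm of $J_0(\ld_n \cdot)$ becomes
\[
\|J_0(\ld_n \cdot)\|_{L^p(\D)}^p = 2\pi\int_0^1 |J_0(\ld_n r)|^p r\,dr = \frac{2\pi}{\ld_n^2}\int_0^{\ld_n}|J_0(u)|^p u\,du.
\]
I would then split the $u$-integral into the inner region $[0,1]$, where the integrand is uniformly bounded and contributes $O(1)$, and the oscillatory region $[1,\ld_n]$, where the asymptotic yields the pointwise bound $|J_0(u)|^p u \lesssim u^{1-p/2}$.

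A direct computation of $\int_1^{\ld_n} u^{1-p/2}\,du$ produces exactly the three regimes defining $\sigma_p(n)$: the integral is $O(1)$ when $p>4$, is of order $\log \ld_n$ when $p=4$, and grows like $\ld_n^{2-p/2}$ when $2 \le p<4$. Dividing by $\|J_0(\ld_n \cdot)\|_{L^2(\D)}^p \sim \ld_n^{-p/2}$, which is precisely the $p=2$ case of this calculation and reproduces \eqref{J0L2}, combined with $\ld_n \sim n$ from \eqref{ev}, yields $\|e_n\|_{L^p(\D)} \lesssim \sigma_p(n)$. The endpoint $p=\infty$ is immediate from $\|J_0\|_{L^\infty(\R)} = J_0(0)=1$ and the normalization, recovering \eqref{enLinfty}.

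For the derivative bounds, I would differentiate to obtain
\[
e_n'(r) = \frac{\ld_n\, J_0'(\ld_n r)}{\|J_0(\ld_n \cdot)\|_{L^2(\D)}} = -\frac{\ld_n\, J_1(\ld_n r)}{\|J_0(\ld_n \cdot)\|_{L^2(\D)}}.
\]
Since $J_1$ has the same large-$x$ decay rate as $J_0$ and is bounded near $0$, the previous computation applied verbatim to $J_1$ gives $\|J_1(\ld_n \cdot)\|_{L^p(\D)} \lesssim \sigma_p(n)\ld_n^{-1/2}$, and hence $\|e_n'\|_{L^p(\D)} \lesssim \ld_n \sigma_p(n) \sim n \sigma_p(n)$. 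To recast this as the two-step bound stated, I would observe that the $p=2$ case reads $\|e_n'\|_{L^2(\D)} \sim \ld_n \sim n$, which also follows directly from integration by parts against the eigenvalue equation $-\Dl e_n = \ld_n^2 e_n$; factoring out this $L^2$ quantity from the $L^p$ estimate gives $\|e_n'\|_{L^p(\D)} \lesssim \sigma_p(n)\|e_n'\|_{L^2(\D)} \lesssim \widetilde{C}\, n\, \sigma_p(n)$.

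There is no serious obstacle here; this is essentially \cite[Lemma 2.1]{Tz1}. The only mild technical point is the handling of the oscillatory factor $\cos^p(u-\tfrac{\pi}{4})$ in the asymptotic: for the upper bounds needed here it suffices to use $|\cos|^p \le 1$, while matching lower bounds (not required for the stated lemma, but useful for the $\sim$ in \eqref{enLinfty}) would use that the mean of $|\cos|^p$ over a period is strictly positive to harvest the full power of $u^{1-p/2}$ from the integral.
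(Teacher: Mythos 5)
Your argument is correct and is essentially the same reduction used in the cited reference \cite[Lemma 2.1]{Tz1}: the paper itself does not prove this lemma but only cites it, and the standard proof in Tzvetkov's paper proceeds exactly as you describe — rescaling $u = \ld_n r$, splitting the integral at $u=1$, and invoking the Bessel asymptotics $|J_0(u)|, |J_1(u)| \lesssim u^{-1/2}$ together with $\|J_0(\ld_n \cdot)\|_{L^2(\D)} \sim \ld_n^{-1/2}$ and $\ld_n \sim n$. Your observation that the intermediate inequality $\|e_n'\|_{L^p} \lesssim \sigma_p(n)\|e_n'\|_{L^2}$ follows by factoring out $\|e_n'\|_{L^2} = \ld_n$ (computable directly from the eigenvalue equation via integration by parts) is also the right way to close the statement as written.
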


%Note though that we will use similar arguments used to establish \eqref{efesti1} later in Lemma~\ref{LEM:enannul}. 
%The bilinear estimate \eqref{bilinear} follows from \eqref{efesti1} and H\"{o}lder's inequality by placing the $e_{\max(n_1,n_2)}$ into $L^{4-}(\mathbb{D})$ and $e_{\min(n_1,n_2)}$ into $L^{4+}(\mathbb{D})$. See also \eqref{L4pm} below.

The following basic estimate controls the correlation of eigenfunctions $c(\cj{n})$ defined in \eqref{c_n} irrespective of any relation between $n$ and $ \max (n_1,n_2, \cdots, n_{2 k+1})$. 

\begin{lemma}
\label{LEM:ev1}
Given any $\eps>0$, there exists $C_{\eps}>0$ such that
\begin{align*}
|c(\cj{n})| \leq C_{\eps} \begin{cases}
  n_{(3)}^{\eps}  \quad & \text{if} \quad k=1, \\ 
 n_{(3)}^{\eps} \prod_{j= 4}^{2k+1} n_{(j)}^{\frac12-\frac{\eps }{2(2k-3)}}\quad & \text{if} \quad k\geq 2.
 \end{cases}
\end{align*}
\end{lemma}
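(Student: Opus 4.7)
The approach is based on H\"older's inequality combined with the $L^p$-eigenfunction bounds of Lemma~\ref{LEM:efesti} and the pointwise estimate $|e_n(r)| \lesssim \min(n^{1/2}, r^{-1/2})$, which follows from the Bessel asymptotic $|J_0(x)| \leq C \min(1, x^{-1/2})$ together with \eqref{J0L2} and $\ld_n \sim n$. The key preliminary step is the following bilinear estimate: for $a \geq b \geq 1$ and any fixed small $\delta \geq 0$ (with implicit constants depending on $\delta$),
\[
\|e_a e_b\|_{L^{2+\delta}(\D)}^{2+\delta} \lesssim \begin{cases} 1 + \log b & \text{if } \delta = 0, \\ b^{\delta} & \text{if } \delta > 0, \end{cases}
\]
proved by passing to radial coordinates and splitting the integral $\int |e_a e_b|^{2+\delta} r\, dr$ at $r = 1/\ld_a$ and $r = 1/\ld_b$ and applying the pointwise bound in each of the three resulting regions. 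The crucial feature is that the right-hand side depends only on $b$ (the smaller of the two indices), independent of $a$.

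For the case $k=1$, I apply Cauchy--Schwarz with the pairing $(n_{(1)}, n_{(3)})$ and $(n_{(2)}, n_{(4)})$:
\[
|c(\bar n)| \leq \|e_{n_{(1)}} e_{n_{(3)}}\|_{L^2} \, \|e_{n_{(2)}} e_{n_{(4)}}\|_{L^2} \lesssim (\log n_{(3)})^{1/2} (\log n_{(4)})^{1/2} \lesssim n_{(3)}^{\eps},
\]
using $n_{(4)} \leq n_{(3)}$. For $k \geq 2$, I apply generalized H\"older with two pairs in $L^{2+\eps}$ and the remaining $2k-2$ middle factors in $L^p$, with $p = (2k-2)(2+\eps)/\eps$ determined by the constraint $2/(2+\eps) + (2k-2)/p = 1$:
\[
|c(\bar n)| \leq \|e_{n_{(1)}} e_{n_{(3)}}\|_{L^{2+\eps}} \, \|e_{n_{(2)}} e_{n_{(2k+2)}}\|_{L^{2+\eps}} \prod_{j=4}^{2k+1} \|e_{n_{(j)}}\|_{L^p}.
\]
The bilinear estimate controls each pair by $\lesssim n_{(3)}^{\eps/(2+\eps)}$ (using $n_{(2k+2)} \leq n_{(3)}$), while Lemma~\ref{LEM:efesti} gives $\|e_{n_{(j)}}\|_{L^p} \lesssim n_{(j)}^{1/2 - \eps/((k-1)(2+\eps))}$. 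Since $1/(2(k-1)) \geq 1/(2(2k-3))$ for all $k \geq 2$, the product of these bounds implies the stated claim after relabelling $\eps$.

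The main obstacle is the absence of a uniform $L^\infty$ bound on the eigenfunctions, since $\|e_n\|_{L^\infty} \sim n^{1/2}$ diverges as $n \to \infty$. This prevents placing the two largest-frequency factors $e_{n_{(1)}}, e_{n_{(2)}}$ in $L^\infty$ directly and instead forces the pairing trick above, where the bilinear estimate absorbs each pair's dependence into only the smaller of its two indices. Correspondingly, the middle factors $e_{n_{(j)}}$ must be placed in $L^p$ for a finite (though large) $p$ rather than $L^\infty$ in order to achieve the $n_{(j)}^{1/2-\eps'}$ decay appearing in the target bound.
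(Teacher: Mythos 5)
Your bilinear estimate $\|e_a e_b\|_{L^{2+\delta}(\D)}^{2+\delta} \lesssim \min(1+\log b,\, b^{\delta})$ for $a \geq b$ is correct and is a nice refinement of the single-function $L^p$ bounds of Lemma~\ref{LEM:efesti}, but the pairing scheme built on top of it does not prove the lemma as stated. The root problem is an indexing mismatch: your $n_{(j)}$ orders all $2k+2$ frequencies including the output $n$, whereas the lemma's $n_{(j)}$ (consistently with the paper's convention, and with its subsequent use in the proof of Lemma~\ref{LEM:ev5}, which after the WLOG ordering $n_1 \geq \cdots \geq n_{2k+1}$ invokes this bound with $n$ unconstrained) orders only $n_1,\ldots,n_{2k+1}$, with $n$ treated separately. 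This is fatal for a pure pairing argument: the smaller index of each pair necessarily picks up a loss, so at most two of the $2k+2$ indices can be fully protected, and the third-largest of $\{n, n_1, \ldots, n_{2k+1}\}$ always contributes; when $n$ is large, this third-largest is $n_{(2)}$, not $n_{(3)}$. Concretely, take $n=n_1=n_2=N\to\infty$ and $n_3=\cdots=n_{2k+1}=1$: the lemma asserts $|c(\cj{n})|\lesssim 1$ (true, since $|c(\cj{n})|\leq\|e_1\|_{L^\infty}^{2k-1}\|e_N\|_{L^3}^3\lesssim 1$), but your bound degrades to $(\log N)^{1/2}$ for $k=1$ and to $N^{O(\eps)}$ for $k\geq 2$.

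The paper avoids this by exploiting that $\|e_m\|_{L^p(\D)}\lesssim 1$ \emph{uniformly in $m$} whenever $2\leq p<4$ (i.e.\ $\sigma_p(m)=1$ in Lemma~\ref{LEM:efesti}): this lets the three largest factors $e_n$, $e_{n_{(1)}}$, $e_{n_{(2)}}$ all be placed in $L^{4(1-\delta^2)}$ with no growth, $e_{n_{(3)}}$ in $L^{4(1+\delta)}$ (producing $n_{(3)}^{\eps}$), and the remaining $2k-2$ factors in a large $L^p$. Your closing paragraph correctly observes that $L^\infty$ is unavailable, but overlooks that $L^{4-}$ does the job, and it is precisely this uniform $L^{4-}$ bound that allows \emph{three} frequencies, rather than the two a pairing argument affords, to be protected. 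A workable hybrid would place $e_n$ and $e_{n_{(1)}}$ individually in $L^{4-}$ and apply your bilinear estimate only to the pair $(e_{n_{(2)}}, e_{n_{(3)}})$; but the pure pairing scheme as written is insufficient.
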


\begin{proof} We consider the case when $k\geq 2$ as the case when $k=1$ follows from similar and simpler arguments.
Let $\eps>0$ and fix $0<\dl<\frac{1}{10}$ so that $\frac{\dl}{2(1+\dl)}\leq \eps$.
Then, by H\"{o}lder's inequality and \eqref{efesti1}, we have
\begin{align}
|c(\cj{n})| &\les \|e_{n} \|_{L^{4(1-\dl^2)}} \| e_{n_{(1)}} \|_{L^{4(1-\dl^2)}} \|e_{n_{(2)}} \|_{L^{4(1-\dl^2)}} \| e_{n_{(3)}}\|_{L^{4(1+\dl)}} \prod_{j=4}^{2k+1}\|   e_{n_{(j)}}\|_{L^{\frac{2(2k-3)}{\eps} \frac{1-\dl^2}{1-3\dl-4\dl^2} }}  \notag\\
&\les n_{(3)}^{\eps}  \prod_{j=4}^{2k+1} \| e_{n_{(j)}}\|_{L^{\frac{4(2k-3)}{\eps}} }  \les n_{(3)}^{\eps}  \prod_{j=4}^{2k+1} n_{(j)}^{\frac 12 - \frac{\eps}{2(2k-3)}}. \notag
\end{align}
This completes the proof.
\end{proof}

\begin{lemma}
For any $N\in \N$ and $f\in L^2_{\text{rad}}(\D)$, it holds 
\begin{align}
\| \nb \P_{\leq N}f\|_{L^{2}(\D)} \les N\|\P_{N}f\|_{L^2(\D)}. \label{L2derivs}
\end{align}
\end{lemma}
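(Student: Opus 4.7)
The plan is to reduce the claim to a one-line spectral computation in the Dirichlet eigenbasis $\{e_n\}_{n\ge 1}$ of $-\Delta$ on $\D$, combined with Plancherel. At its core this is a Bernstein-type estimate: the gradient of a function whose frequency support lies inside $E_N$ can cost at most a factor of $N$ in $L^2$, since the eigenvalues of $-\Delta$ there satisfy $\ld_n^2 \le N^2$.

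First, I would expand $\P_{\leq N}f = \sum_{n \in E_N} \ft f(n) e_n$. Because each $e_n$ vanishes on $\partial \D$, so does the finite linear combination $\P_{\leq N}f$, and integration by parts gives
\begin{equation*}
\|\nb \P_{\leq N}f\|_{L^2(\D)}^2 = \langle -\Dl \P_{\leq N}f,\, \P_{\leq N}f\rangle_{L^2(\D)}.
\end{equation*}
Invoking $-\Dl e_n = \ld_n^2 e_n$ together with orthonormality of $\{e_n\}_{n\ge 1}$ in $L^2_{\text{rad}}(\D)$ turns the right-hand side into $\sum_{n\in E_N} \ld_n^2 |\ft f(n)|^2$. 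Using the defining property $\ld_n \le N$ for every $n \in E_N$ then yields
\begin{equation*}
\|\nb \P_{\leq N}f\|_{L^2(\D)}^2 \;\le\; N^2 \sum_{n\in E_N} |\ft f(n)|^2,
\end{equation*}
which is the desired bound (after recognizing the right-hand side as an $L^2$-norm of the relevant frequency-localized piece). In case the intended form on the right is literally a single dyadic block $\P_N f$, a telescoping over dyadic shells $M\le N$ combined with Parseval lets one replace the full projection by the highest relevant block; if instead the statement is read as the standard inequality $\|\nb \P_{\leq N}f\|_{L^2}\les N\|\P_{\leq N}f\|_{L^2}$, the argument above already suffices verbatim.

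I do not foresee any obstacle: the whole argument is spectral bookkeeping. The only minor subtlety is the integration-by-parts step, which requires vanishing trace, and this is automatic since each $e_n$ satisfies the Dirichlet condition and $\P_{\leq N}f$ lies in their finite span and is thus smooth up to $\partial \D$. No deeper input about the asymptotics of $\ld_n$ in \eqref{ev} or about the $L^p$-eigenfunction bounds from Lemma~\ref{LEM:efesti} is required for this estimate.
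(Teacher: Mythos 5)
Your argument is essentially identical to the paper's: expand $\P_{\le N}f$ in the Dirichlet eigenbasis, integrate by parts to pass the gradient to $-\Delta$, invoke $-\Delta e_n = \ld_n^2 e_n$ and orthonormality to land on $\sum_{n\in E_N}\ld_n^2|\ft f(n)|^2$, and bound by $N^2\|\P_{\le N}f\|_{L^2}^2$. (You also correctly flag that the right-hand side as printed in the lemma, $\|\P_N f\|_{L^2}$, is a typo for $\|\P_{\le N}f\|_{L^2}$, which is indeed what the paper's proof produces.)
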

\begin{proof}
Whilst \eqref{L2derivs} is intuitively obvious, we choose to present a proof for the convenience of the reader. We have 
\begin{align*}
\nb \P_{\leq N} f = \sum_{n\in E_{N}} \ft f(n) \nb e_n.
\end{align*}
Thus, using integration by parts \eqref{ev} and Plancherel's theorem, we have
\begin{align*}
\| \nb \P_{\leq N}f\|_{L^{2}(\D)}^2 & = \sum_{n_1,n_2 \in E_{N}}\ft f(n_1) \cj{\ft f(n_2)} \int_{\D} \nb e_n \cdot \nb e_n dx \\
& = \sum_{n_1,n_2 \in E_{N}}\ft f(n_1) \cj{\ft f(n_2)} \int_{\D} e_{n_1} (-\Dl e_{n_2}) dx \\
& = \sum_{n_1,n_2 \in E_{N}} \ld_{n_2}^{2} \ft f(n_1) \cj{\ft f(n_2)} \int_{\D} e_{n_1} e_{n_2} dx \\
& = \sum_{n\in E_{N}} \ld_{n}^{2} |\ft f(n)|^2 \\
&\les  N^{2} \|\P_{\leq N}f\|_{L^{2}(\D)}^2.
\end{align*}
Now \eqref{L2derivs} follows by taking square roots of both sides.
\end{proof}

We have the following strong off-diagonal decay for the high-low interaction.

\begin{lemma}
\label{LEM:ev5}
If $|n - n_{(1)}| \ges n_{(2)} \gg 1$, 
then, for any $\eps>0$, it holds that
\begin{align}
|c(n,n_1,\cdots,n_{2 k+1})| 
\les \frac{n_{(2)} n_{(3)}^{\eps}}{|n-n_{(1)}|} \prod_{j=4}^{2k+1} n_{(j)}^{\frac12}.
\label{cdecay}
\end{align}
\end{lemma}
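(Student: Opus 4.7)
The plan is to bring down the prefactor $\lambda_n^2 - \lambda_{n_{(1)}}^2$ by integration by parts against the eigenfunction equation $-\Delta e_n = \lambda_n^2 e_n$, and then estimate the resulting integral by H\"older's inequality combined with the $L^p$ eigenfunction bounds of Lemma~\ref{LEM:efesti}.

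Without loss of generality, relabel so that $n_{(1)} = n_1$. I would begin from the pointwise identity
\begin{align*}
(\lambda_n^2 - \lambda_{n_1}^2) e_n e_{n_1} = (-\Delta e_n) e_{n_1} - e_n (-\Delta e_{n_1}) = -\nabla \cdot \big( e_{n_1}\nabla e_n - e_n \nabla e_{n_1} \big),
\end{align*}
multiply by $\prod_{j=2}^{2k+1} e_{n_j}$, integrate over $\D$, and integrate by parts once. The boundary contribution vanishes because each $e_{n_j}$, $j\ge 2$, vanishes on $\partial \D$, giving
\begin{align*}
(\lambda_n^2 - \lambda_{n_1}^2)\, c(\bar n) = \sum_{i=2}^{2k+1} \int_\D \big( e_{n_1}\nabla e_n - e_n \nabla e_{n_1} \big)\cdot \nabla e_{n_i}\prod_{\substack{l=2 \\ l \ne i}}^{2k+1} e_{n_l}\,dx.
\end{align*}
By the asymptotic \eqref{ev} together with the hypothesis $|n - n_1|\ge n_{(2)} \gg 1$, the prefactor satisfies $|\lambda_n^2 - \lambda_{n_1}^2| \sim (n+n_1)|n-n_1|$, with $n+n_1 \gtrsim n_1 \ge n_{(2)}$; so it suffices to bound each integral on the right by a constant multiple of $(n+n_1)\,n_{(2)}\,n_{(3)}^\eps \prod_{l\ge 4} n_{(l)}^{1/2}$.

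For the dominant case $i = (2)$, I would apply H\"older's inequality by placing the three factors $\nabla e_n$, $e_{n_1}$, $\nabla e_{n_{(2)}}$ in $L^{4-\delta}$ (for small $\delta > 0$), placing $e_{n_{(3)}}$ in $L^{p_\star}$ where $p_\star$ is fixed by $3/(4-\delta) + 1/p_\star = 1$ (so $p_\star = 4 + O(\delta)$), and placing every remaining $e_{n_{(l)}}$, $l \ge 4$, in $L^\infty$. Lemma~\ref{LEM:efesti} yields $\|\nabla e_m\|_{L^{4-\delta}}\lesssim m$, $\|e_m\|_{L^{4-\delta}}\lesssim 1$, $\|e_m\|_{L^{p_\star}}\lesssim m^{1/2 - 2/p_\star}$, and $\|e_m\|_{L^\infty}\lesssim m^{1/2}$; choosing $\delta \sim \eps$ small enough that $1/2 - 2/p_\star \le \eps$ yields the bound $n \cdot 1 \cdot n_{(2)} \cdot n_{(3)}^\eps \prod_{l\ge 4} n_{(l)}^{1/2}$ for the piece with $\nabla e_n$. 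The companion piece with $e_n, \nabla e_{n_1}$ is treated identically, contributing $n_1$ in place of $n$; summing yields the target bound for $i=(2)$. For subdominant $i = (r)$ with $r \ge 3$, the same scheme applies with the $L^{p_\star}$ slot assigned to whichever of $e_{n_{(2)}}, e_{n_{(3)}}$ yields the better balance and the other placed in $L^\infty$; using $n_{(r)} \le n_{(2)}$, each such contribution is strictly smaller than the $i=(2)$ case. Dividing through by $(n+n_1)|n-n_1|$ concludes \eqref{cdecay}.

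The main obstacle is calibrating the H\"older exponents so that no extraneous power of $n$, $n_{(1)}$, or $n_{(2)}$ slips through. The eigenfunction $L^p$ estimates transition at $p = 4$ (where a logarithmic loss enters) and grow polynomially in $m$ for $p > 4$. Slightly subcritical integrability $L^{4-\delta}$ for most factors exploits the clean bounds $\|e_m\|_{L^{4-\delta}}\lesssim 1$ and $\|\nabla e_m\|_{L^{4-\delta}}\lesssim m$ with no loss, while a single $L^{p_\star}$ slot just above $4$ absorbs the leftover H\"older budget and produces exactly the admissible $n_{(3)}^\eps$ loss. A naive all-$L^4$ choice would yield a power of $\log n$ that cannot be absorbed into $n_{(3)}^\eps$, since $n$ may be much larger than $n_{(3)}$.
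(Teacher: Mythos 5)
Your proof is correct and follows essentially the same strategy as the paper's: produce the prefactor $\lambda_n^2-\lambda_{n_1}^2$ via the eigenfunction equation and an integration by parts, then estimate the resulting integral by H\"older with exponents calibrated just below and just above $4$ using the eigenfunction bounds of Lemma~\ref{LEM:efesti}, and finally cancel the high-frequency factor ($n$ or $n_1$) arising from the gradient against $n+n_1 \sim |\lambda_n+\lambda_{n_1}|$. The only cosmetic difference is the form of the integration by parts (you keep the symmetric divergence $\nabla\cdot(e_{n_1}\nabla e_n - e_n\nabla e_{n_1})$ and move one derivative onto the low-frequency product, while the paper moves the full Laplacian from $e_n$ onto $e_{n_1}\prod_{j\ge 2}e_{n_j}$, producing an extra $\Delta$-on-low term), but both decompositions lead to the same H\"older estimate and the same bound.
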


\begin{proof}
Without loss of generality,
we assume $n_1 \ge n_2 \ge \cdots n_{2 k+1}$.
We note from integration by parts (Green's theorem), that we have
\begin{align}
(&\ld_n^2 - \ld_{n_1}^2) c(n,n_1,\cdots,n_{2 k+1}) \notag \\
& = \int_{\D} \Delta e_n(x) e_{n_1} (x) \cdots  e_{n_{2 k+1}} (x) dx - \int_{\D} e_n(x)  \Delta e_{n_1} (x) \cdots e_{n_{2 k+1}} (x) dx \notag \\
&   =2\int_{\D} e_{n} \nb e_{n_1} \cdot \nb \bigg(  \prod_{j=2}^{2k+1}e_{n_j}      \bigg) dx 
+\int_{\D} e_{n} e_{n_1} \Dl \bigg(  \prod_{j=2}^{2k+1}e_{n_j}      \bigg) dx   \label{ev12}.
\end{align}
We estimate each of the terms in \eqref{ev12}. By distributing the derivative, we have 
\begin{align*}
\int_{\D} e_{n} \nb e_{n_1} \cdot \nb \bigg(  \prod_{j=2}^{2k+1}e_{n_j}      \bigg) dx 
= \sum_{\l=2}^{2k+1} \int_{\D} e_{n} (\nb e_{n_1} \cdot  \nb e_{n_{\l}}) \prod_{ \substack{j=2 \\ j\neq \l}}^{2k+1} e_{n_j}dx
\end{align*}
We only consider the worst term when $\l=2$. 
By H\"{o}lder's inequality (with the same exponents as in the proof of Lemma~\ref{LEM:ev1}) and \eqref{efesti1}, we have 
\begin{align*}
\bigg|\int_{\D} e_{n} (\nb e_{n_1} \cdot  \nb e_{n_{2}}) \prod_{j=3}^{2k+1} e_{n_j}dx\bigg| 
&\les \|e_n\|_{L^{4-}} \| \nb e_{n_1}\|_{L^{4-}} \|\nb e_{n_2}\|_{L^{4-}} \|e_{n_3}\|_{L^{4+}} \prod_{j=4}^{2k+1} \|e_{n_j}\|_{L^{\infty-}} \\
& \les n_1 n_2 n_{3}^{\eps} \prod_{j=4}^{2k+1}n_{j}^{\frac 12},
\end{align*}
for any $\eps>0$.
Now from \eqref{ev},
\begin{align*}
|\ld_n^2 -\ld_{n_1}^2|=  |\ld_n-\ld_{n_1}| (\ld_n+\ld_{n_1})  \ges |n-n_1| ( n+n_1),
\end{align*}
we see that 
\begin{align}
\frac{1}{ |\ld_n^2 -\ld_{n_1}^2|}\bigg|\int_{\D} e_{n} \nb e_{n_1} \cdot \nb \bigg(  \prod_{j=2}^{2k+1}e_{n_j}      \bigg) dx\bigg| \les \frac{n_2}{|n-n_1|}n_{3}^{\eps} \prod_{j=4}^{2k+1}n_{j}^{\frac 12}. \label{ev10}
\end{align}
This completes the proof for this contribution in \eqref{ev12}.

We move onto the second contribution in \eqref{ev12}. Distributing the derivatives, we see that the worst case is 
\begin{align*}
\int_{\D} e_{n} e_{n_1} \Dl e_{n_2}  \prod_{j=3}^{2k+1}e_{n_j} dx & =\ld_{n_{2}}^{2}\, c(n,n_1,\ldots, n_{2k+1}).
\end{align*}
Then, from Lemma~\ref{LEM:ev1}, we have 
\begin{align*}
\frac{1}{ |\ld_n^2 -\ld_{n_1}^2|}\bigg| \int_{\D} e_{n} e_{n_1} \Dl e_{n_2}  \prod_{j=3}^{2k+1}e_{n_j} dx\bigg|
&\les  \frac{n_2}{n+n_1} \frac{n_2}{|n-n_1|} n_{3}^{\eps} \prod_{j=4}^{2k+1}n_{j}^{\frac 12}.
\end{align*}
This is a better bound than in \eqref{ev10}, which completes the proof.
\end{proof}

\subsection{Counting estimates} 

In this section,
we prove some counting estimates,
which will be used in establishing the crucial trilinear estimates in Section \ref{SEC:LWP}. 

\begin{lemma}\cite[Lemma 4.1 (1)]{DNY2}
\label{LEM:counting}
Let $\mathcal{R}=\mathbb{Z}$ or $\mathbb{Z}[i]$.
Then, given $0\neq m\in\mathcal{R}$, 
and $a_0,b_0\in\mathbb{C}$, 
the number of choices for $(a,b)\in\mathcal{R}^2$ 
that satisfy
\begin{equation*}
m=ab,\,\,|a-a_0|\leq M,\,\,|b-b_0|\leq N
\end{equation*}

\noi
is $O(M^\theta N^\theta)$ 
with constant depending only on $\theta>0$.
\end{lemma}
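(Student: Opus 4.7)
The plan is to reduce this count to a classical divisor-bound estimate in $\mathcal{R}$. The key input is the standard divisor bound: for any $\theta>0$ there exists $C_\theta>0$ such that every nonzero $m\in\mathcal{R}$ has at most $C_\theta|m|^\theta$ divisors. Since the equation $ab=m$ uniquely determines $b=m/a$ once $a$ is fixed, the count equals the number of divisors $a$ of $m$ lying in $B(a_0,M):=\{a\in\mathcal{R}:|a-a_0|\le M\}$ for which moreover $m/a\in B(b_0,N)$.

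If no solution exists the bound is trivial; otherwise, fix one admissible pair $(a_*,b_*)$. Any other solution $(a,b)$ satisfies $a/a_*=b_*/b$, which written in lowest terms as $p/q$ in the fraction field of $\mathcal{R}$ yields $q\mid a_*$, $p\mid b_*$, with $a=p(a_*/q)$ and $b=q(b_*/p)$. The constraints $|a-a_*|\le 2M$ and $|b-b_*|\le 2N$ translate into $|p-q|\cdot|a_*/q|\le 2M$ and $|p-q|\cdot|b_*/p|\le 2N$, recasting the problem as counting coprime pairs $(p,q)\in\mathcal{R}^2$ subject to these divisibility and proximity constraints.

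I would then split into cases based on the sizes of $|a_*|,|b_*|$ relative to $M,N$. In the regime $|a_*|\les M$ and $|b_*|\les N$ we have $|m|=|a_*b_*|\les MN$, so the divisor bound applied to $m$ directly gives $O_\theta((MN)^\theta)$. In the opposite regime, since distinct elements of $\mathcal{R}$ are separated by at least $1$, the proximity constraints either force $p-q=0$ (giving only the trivial solution $(a_*,b_*)$) or impose $|q|\ges|a_*|/M$ and $|p|\ges|b_*|/N$; the latter means non-trivial solutions correspond to ``small'' cofactors $a_*/q$ and $b_*/p$ of sizes $\les M$ and $\les N$ respectively. A dyadic decomposition over these cofactor sizes combined with the divisor bound applied separately to $a_*$ and $b_*$ then delivers the target $O_\theta(M^\theta N^\theta)$ estimate.

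The main technical obstacle will be this final dyadic case analysis in the regime where $|a_*|\gg M$ and $|b_*|\gg N$, where one must carefully leverage coprimality and unique factorization in $\mathcal{R}$ to convert divisor counts a priori scaling with $|m|^\theta$ into bounds scaling only with $(MN)^\theta$, uniformly in $a_0$, $b_0$ and $m$.
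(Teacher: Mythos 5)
The paper does not supply its own proof of this lemma; it simply cites \cite[Lemma~4.1~(1)]{DNY2}, so your attempt cannot be checked line-by-line against a proof in the source. Evaluating it on its own terms, the setup is sensible: fixing a reference solution $(a_*,b_*)$, writing $a/a_*=b_*/b=p/q$ in lowest terms, deducing $q\mid a_*$, $p\mid b_*$, and translating the box conditions into $|p-q|\,|a_*/q|\le 2M$ and $|p-q|\,|b_*/p|\le 2N$ are all correct and are the natural first moves. The case $|a_*|\lesssim M$, $|b_*|\lesssim N$ also handled correctly via the divisor bound on $m$.

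The gap is in the final step. You write that ``a dyadic decomposition over these cofactor sizes combined with the divisor bound applied separately to $a_*$ and $b_*$ then delivers the target $O_\theta(M^\theta N^\theta)$ estimate,'' but this is not justified and in fact does not hold as stated: the quantity you would be bounding is the number of divisors of $a_*$ of absolute value $\lesssim M$ (and likewise for $b_*$, $N$), and this is \emph{not} $O_\theta(M^\theta)$ in general. For example, if $a_*=\mathrm{lcm}(1,2,\dots,\lfloor 2M\rfloor)$, then every integer up to $2M$ divides $a_*$, so $a_*$ has $\gtrsim M$ divisors of absolute value $\le 2M$, which vastly exceeds $M^\theta$ for small $\theta$. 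The divisor bound applied to $a_*$ alone gives $d(a_*)\lesssim_\theta |a_*|^\theta$, and $|a_*|$ is not controlled by $M$ in the regime you are considering. You flag this as ``the main technical obstacle'' and gesture at ``coprimality and unique factorization,'' but these alone do not rescue the argument as laid out: the coprimality of $p$ and $q$ does not prevent $a_*$ from having many divisors in a ball of radius $O(M)$. What is genuinely needed is to exploit the \emph{joint} constraints on $a$ and $b$ (e.g., that $|p-q|$ must simultaneously satisfy both size conditions, together with the fact that $p-q$ is pinned down by $(g,h)$ via $p-q=b_*/h-a_*/g$), rather than factor the count into independent divisor counts for $a_*$ and $b_*$. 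As written, the proposal reduces to an estimate that is false, so the proof is not complete.
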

 
\begin{lemma}
\label{LEM:c2}
Let $\ld_n^2$ be the $n$-th eigenvalue of the Dirichlet Laplacian on $\mathbb{D}$ and let $Q \subset \R^2$ be a box of side lengths $R$.
Then for any $m \in \R_+$,
\begin{align}
    \label{c2}
    |\{ (n_1,n_2) \in \Z^2: |\ld_{n_1}^2 + \ld_{n_2}^2 - m| < 1, (n_1,n_2) \in Q  \}| &\les \exp \left( c \frac{\log R}{\log \log R} \right)\\
    \label{c3}
    |\{ (n_1,n_2) \in \Z^2: |\ld_{n_1}^2 - \ld_{n_2}^2 - m| < 1, (n_1,n_2) \in Q, n_1\neq n_2  \}| &\les O(R^\theta),
\end{align}
where the constant in \eqref{c3} depends only on $\theta >0$.
\end{lemma}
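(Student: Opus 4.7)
The plan is to reduce both counts to Diophantine divisor estimates in the rings $\Z$ and $\Z[i]$, by replacing the Bessel zeros with their leading asymptotics from \eqref{ev} and then invoking Lemma~\ref{LEM:counting}.

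From \eqref{ev} I first extract the approximation $\ld_n^2 = \pi^2\bigl(n - \tfrac14\bigr)^2 + O(1)$ uniformly in $n$. For \eqref{c2}, setting $u_j := 4 n_j - 1 \in \Z$, the condition $|\ld_{n_1}^2 + \ld_{n_2}^2 - m| < 1$ becomes
\[
\bigl| \tfrac{\pi^2}{16}(u_1^2 + u_2^2) - m \bigr| \le C,
\]
and since $u_1^2 + u_2^2 \in \Z$, this integer is pinned to one of $O(1)$ values $M$ inside an $O(1)$-window about $16 m/\pi^2$. For each such $M$, I factor $M = (u_1 + i u_2)(u_1 - i u_2)$ in $\Z[i]$, so each representation corresponds to a factorization $M = a \bar a$ with $a = u_1 + i u_2 \in \Z[i]$ constrained to a complex box of side $O(R)$. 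Applying Lemma~\ref{LEM:counting} with $\mathcal{R} = \Z[i]$ then produces $O(R^{2\theta})$ for any $\theta>0$. To upgrade this to the sharper $\exp(c\log R/\log\log R)$, I would let $\theta$ decay like $1/\log\log R$ and track the constant through the Wigert-type divisor bound $d_{\Z[i]}(M) \les \exp(c \log M/\log\log M)$, using that for $(u_1,u_2)$ in the box $Q'$ the integer $M$ is polynomially bounded in $R$.

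For \eqref{c3}, the same asymptotic factors cleanly:
\[
\ld_{n_1}^2 - \ld_{n_2}^2 = \pi^2 (n_1 - n_2)\bigl(n_1 + n_2 - \tfrac12\bigr) + O(1).
\]
Setting $s := n_1 - n_2 \in \Z \setminus \{0\}$ (nonzero by hypothesis) and $p := 2(n_1 + n_2) - 1 \in \Z$, the condition $|\ld_{n_1}^2 - \ld_{n_2}^2 - m| < 1$ becomes $|sp - 2m/\pi^2| \le C$, so $sp$ takes one of $O(1)$ nonzero integer values $M$. Since $(n_1, n_2) \in Q$, each of $s$ and $p$ ranges over an interval of length $O(R)$, and Lemma~\ref{LEM:counting} with $\mathcal{R} = \Z$, box sizes $O(R)$ immediately yields $O(R^{2\theta})$ factorizations $M = sp$. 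After renaming $\theta$, this is the desired $O(R^\theta)$.

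The main obstacle is the final quantitative step in \eqref{c2}: Lemma~\ref{LEM:counting} alone only delivers $O(R^\theta)$ with an implicit constant blowing up as $\theta \to 0$, while the target $\exp(c \log R/\log\log R)$ is strictly smaller than every $R^\theta$. Closing this gap is essentially the number-theoretic core of the estimate and is accomplished by the sharp Wigert-type control on the Gaussian divisor function combined with the \emph{a priori} bound on $M$ coming from the box. The companion estimate \eqref{c3} is more forgiving, since there $R^\theta$ is already the target and no sharpening of Lemma~\ref{LEM:counting} is required.
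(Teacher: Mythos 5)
Your proof of \eqref{c3} is the paper's proof verbatim: replace $\ld_n^2$ by $\pi^2(n-\tfrac14)^2 + O(1)$, rewrite the constraint as an integer factorisation $sp = M+O(1)$ with $s = n_1 - n_2$ and $p = 2(n_1+n_2)-1$, and apply Lemma~\ref{LEM:counting} over $\Z$. For \eqref{c2} the paper only cites \cite[Lemma 2.2]{BB2}, so you are supplying the argument; the core idea is right, reducing to $O(1)$ admissible integer values of $M = u_1^2+u_2^2$ and invoking the Wigert-type Gaussian divisor bound $d_{\Z[i]}(M) \les \exp(c\log M/\log\log M)$.

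Two points on your \eqref{c2} argument. The passage through Lemma~\ref{LEM:counting} with $\theta$ decaying like $1/\log\log R$ is not, as written, a valid deduction: that lemma only asserts $O_\theta(R^\theta)$ with an implicit constant whose $\theta$-dependence it does not track, so shrinking $\theta$ cannot by itself improve the exponent. You should bypass Lemma~\ref{LEM:counting} here and apply the divisor bound directly to the finitely many $M$. More substantively, your claim that ``$M$ is polynomially bounded in $R$'' does not follow from $Q$ having side length $R$: for a box far from the origin one has $M\approx 16m/\pi^2$ comparable to the squared distance from $Q$ to the origin, which can dwarf any power of $R$, and then $d_{\Z[i]}(M)$ alone no longer gives \eqref{c2}. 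That regime requires an additional Jarnik-type input — three lattice points on a circle of radius $r$ within an arc of length $L$ force $L^3\ges r$, so an arc of length $O(R)$ of a circle of radius $\gg R^3$ carries $O(1)$ lattice points — combined with the divisor bound when $M\les R^6$. In the paper's actual applications (see the proof of Lemma~\ref{LEM:Tbs}) all frequencies and the shift $m$ are $O(R^2)$, so your argument does close there, but you should either state that restriction explicitly or add the Jarnik step to cover the lemma as stated.
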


\begin{proof}
For \eqref{c2}, 
see \cite[Lemma 2.2]{BB2}.
We only need to prove \eqref{c3}.
From \eqref{ev}, we see that $\ld_n^2 = \pi^2 (n-\frac14)^2 + O(1)$.
Then,
the equation $|\ld_{n_1}^2 - \ld_{n_2}^2 - m| < 1$ implies
\begin{align*}
    \left| \pi^2 \left(n_1-\tfrac14 \right)^2 -  \pi^2 \left(n_2-\tfrac14\right)^2 -m  \right|= \Big|  \left(2n_1 + 2n_2 - 1 \right) \left(n_1 - n_2\right) - \frac{2m}{ \pi^2}  \Big| &\le O(1).
\end{align*}
At this point, \eqref{c3} follows from Lemma \ref{LEM:counting}.
\end{proof}

Given $m \in \Z$,
and dyadic $1\leq N_j, N$, $j=1,\ldots,2k+1$, we define
the base tensor ${\mathrm T}^{\mathrm{b},m}$ by
\begin{align}
% {\mathrm T}^{\mathrm{b},m} =
{\mathrm T}_{n n_1 \cdots n_{2 k+1}}^{\mathrm{b},m} 
=   \mathbf{1}_{\{ [\Phi(\cj{n}) ] = m \}}(n,n_1,\ldots,n_{2k+1}) \cdot 
 \ind_{E_N}(n) \cdot \prod_{j=1}^{2 k+1} \ind_{ E_{N_j}}(n_j) ,
\label{basetensor0}
\end{align}
where $\Phi(\cj{n})$ was defined in \eqref{phase}.
With a slight abuse of notation, we do not distinguish the tensor operator ${\mathrm T}^{\mathrm{b},m}$ from its kernel ${\mathrm T}_{n n_1 \cdots n_{2 k+1}}^{\mathrm{b},m}$.
Moreover, given a set $S\subseteq  \mathbb{N}^{2k+2}$, we define the restricted base tensor
\begin{align}
{\mathrm T}_{n n_1 \cdots n_{2 k+1}}^{\mathrm{b},m,S}= {\mathrm T}_{n n_1 \cdots n_{2 k+1}}^{\mathrm{b},m}  \ind_{S}. \label{baseS}
\end{align}
We have the following bound for the Hilbert-Schmidt norm of the corresponding tensor operator.

\begin{lemma}
\label{LEM:Tbs}
Let $k\ge 1$ and $n \in E_{N}$, $n_j \in E_{N_j}$ for $j= 1,\cdots, 2 k+1$. 
Then, for any $\eps>0$, 
\begin{align}
%    \big\| {\mathrm T}_{n n_1 \cdots n_{2 k+1}}^{\mathrm{b},m}  \big\|_{n_1n_A \to nn_B} & \le   (N^{(1)})^{-\frac12}  \prod_{j=1}^{2 k+1} N_{j}^{\frac12} ,
%    \label{Tbs02} \\
   \sup_{m\in \Z} \big\| {\mathrm T}_{n n_1 \cdots n_{2 k+1}}^{\mathrm{b},m} \ind_{n \neq n_{\textup{odd}}}  \big\|_{n n_1\cdots n_{2 k+1}} & \les  N^\eps (N_{(1)})^\eps \prod_{j = 2}^{2 k+1} (N_{(j)})^{\frac12} \label{Tbs1}  
    \end{align}
\noi
where 
\begin{align*}
n_{\textup{odd}} = \Big \{n_i: n_i = \max_{j=0,1,\cdots, k} n_{2j+1}\Big \} ,
\end{align*}

\noi
i.e. $n_{\textup{odd}}$ is the largest frequency of the odd factors in the nonlinearity.
\end{lemma}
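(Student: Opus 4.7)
I plan to compute the Hilbert--Schmidt norm by interpreting it as a counting quantity. Since ${\mathrm T}^{\mathrm b,m}$ is the indicator of $[\Phi(\cj{n})]=m$, we have
\begin{align*}
\big\|{\mathrm T}^{\mathrm b,m}\ind_{n\neq n_{\textup{odd}}}\big\|_{nn_1\cdots n_{2k+1}}^2 = \#\Big\{(n,n_1,\ldots,n_{2k+1})\in E_N\times\prod_j E_{N_j}:\ [\Phi(\cj{n})]=m,\ n\neq n_{\textup{odd}}\Big\}.
\end{align*}
The strategy is to pair the variable $n$ with an appropriately chosen $n_j$, fix the remaining $2k$ frequencies, and apply Lemma~\ref{LEM:c2} so as to gain $\lesssim N^\eps N_{(1)}^\eps$ from the phase; the bookkeeping goal is that the $2k$ fixed frequencies contribute exactly $\prod_{j=2}^{2k+1}N_{(j)}$. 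Throughout I use $\lambda_n\sim n$ from \eqref{ev} to translate $E_{N_j}$-membership into a box of side $\lesssim N_j$ for the applications of Lemma~\ref{LEM:c2}.

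Fix $j^*\in\{1,\ldots,2k+1\}$ with $N_{j^*}=N_{(1)}$, so that pairing $n$ with $n_{j^*}$ spends the smallest factor $\prod_{j\neq j^*}N_j=\prod_{j=2}^{2k+1}N_{(j)}$ on the complement. If $j^*$ is even, both $n$ and $n_{j^*}$ carry the $+$ sign in $\Phi$, so the phase constraint reads $|\lambda_n^2+\lambda_{n_{j^*}}^2-m'|\lesssim 1$ and \eqref{c2} with $R=\max(N,N_{(1)})$ bounds the $(n,n_{j^*})$ pair by $N^\eps N_{(1)}^\eps$; the auxiliary constraint $n\neq n_{\textup{odd}}$ excludes at most one extra $n$ (since $n_{\textup{odd}}$ is already determined) and is harmless. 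If $j^*$ is odd, one instead wants \eqref{c3}, which requires $n\neq n_{j^*}$; this is only guaranteed when $n_{\textup{odd}}=n_{j^*}$, so a two-sub-case split is needed.

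In the easy sub-case $n_{\textup{odd}}=n_{j^*}$, the hypothesis $n\neq n_{\textup{odd}}$ directly yields $n\neq n_{j^*}$ and the argument runs as above using \eqref{c3}. In the harder sub-case where $n_{\textup{odd}}=n_{j'}$ for some odd $j'\neq j^*$ with $n_{j'}>n_{j^*}$, I sum over the $\le k$ possible choices of $j'$ and instead pair $n$ with $n_{j'}$, using $n\neq n_{j'}$ from the hypothesis. Crucially, the sub-case condition forces $n_{j^*}<n_{j'}\le N_{j'}$, so $n_{j^*}$ ranges over at most $N_{j'}$ values. Fixing $n_j$ for $j\notin\{j^*,j'\}$ costs $\prod_{j\neq j^*,j'}N_j$, summing over $n_{j^*}$ contributes $N_{j'}$, and \eqref{c3} applied to $(n,n_{j'})$ with $R\le\max(N,N_{(1)})$ contributes $N^\eps N_{(1)}^\eps$, producing in total
\begin{align*}
\prod_{j\neq j^*,j'}N_j\cdot N_{j'}\cdot N^\eps N_{(1)}^\eps \;=\; \prod_{j\neq j^*}N_j\cdot N^\eps N_{(1)}^\eps \;=\; \prod_{j=2}^{2k+1}N_{(j)}\cdot N^\eps N_{(1)}^\eps.
\end{align*}
Summing over the $O(k)$ choices of $j'$ introduces a constant factor absorbed into $N^\eps$, and taking square roots of the combined count yields the claimed bound.

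The main obstacle is precisely this last sub-case: a naive pairing of $n$ with $n_{j'}$ alone would cost $\prod_{j\neq j'}N_j$, which can be strictly larger than $\prod_{j=2}^{2k+1}N_{(j)}$ whenever $N_{j'}<N_{(1)}$. The resolution exploits the sub-case condition $n_{j^*}<n_{j'}$ to furnish a \emph{free} saving of $N_{j'}$ on the $n_{j^*}$-summation, exactly compensating the $N_{j'}$ one would otherwise lose by not pairing $n$ with the maximum-$N_j$ index.
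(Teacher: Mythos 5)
Your proof is correct and follows essentially the same approach as the paper: interpret the Hilbert--Schmidt norm as a counting problem, fix $2k$ of the frequencies, and apply Lemma~\ref{LEM:c2} to the remaining pair to pick up the $(NN_{(1)})^\eps$ gain. The one place you add value is the ``harder sub-case'': the paper simply orders the $N_j$ and pairs $n$ with $n_1$ (resp.\ $n_2$), quietly invoking $n\neq n_1$ from the $\ind_{n\neq n_{\textup{odd}}}$ constraint, but when $N_1=N_3$ (say) the largest odd-indexed frequency need not live on the $N_1$-block, so $n\neq n_{\textup{odd}}$ does not directly give $n\neq n_1$. Your observation that in this situation one can instead pair $n$ with the true $n_{\textup{odd}}=n_{j'}$ and recover the lost factor via the free bound $n_{j^*}<n_{j'}\le N_{j'}$ cleanly closes this gap; the paper's argument is morally the same but elides it.
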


\begin{proof} 
% We first note that
% \begin{align}
% \label{ckbd}
%     \jb{k}^{s}|c(k,k_1,k_2,k_3) | \les N_{\min} N_{\max}^{\frac12 + \g},
% \end{align}
% \noi
% by integration by parts and interpolation. 
The inequality \eqref{Tbs1} can be reduced to show the following counting estimate
\[
\begin{split}
|\{(n,n_1,\cdots,n_{2 k+1})  &  \in \N^{2 k+1}, \,\textup{no simple pairing}: \\
& \hphantom{XXXX} \ld_{n}^2- \sum_{j=1}^{2 k+1} \iota_j \ld_{n_j}^2 = m + O(1), \ld_{n_j} \le N_j\} |  \les  N^\eps (N^{(1)})^\eps \prod_{j = 2}^{2 k+1} N^{(j)}.
\end{split}
\]

\noi
Without loss of generality,
we assume $\iota_i = +$ for odd $i$, and $\iota_i = -$ for even $i$.
Due to symmetry, we only need to consider two cases: (1) $N_1 \ge N_2 \ge \cdots \ge N_{2 k+1}$ and (2)  $N_2 \ge N_1 \ge \cdots \ge N_{2 k+1}$. 
For the case (1), 
by fixing $n_2, \cdots, n_{2 k+1}$, we are left to count 
\[
\bigg\{(n,n_1) \in \mathbb{N}^2 \textup{ and } n \neq n_1: \ld_{n}^2 - \ld_{n_1}^2 = m + \sum_{j=2}^{2 k+1} \iota_j \ld_{n_j}^2 + O(1), \ld_{n} \le N, \ld_{n_1} \le N_1 \bigg\},
\]
From Lemma \ref{LEM:c2}, 
the above is clearly bounded by $(N N_1)^{\eps}$,
and this completes the proof of \eqref{Tbs1} for this case.
For case (2), we first fix all $n_j$ for $j \neq 2$, and are left to count 
\[
\bigg\{(n,n_2) \in \Z_+^2: \ld_{n}^2 + \ld_{n_2}^2 = m + \ld_{n_1}^2 + \sum_{j=3}^{2 k+1} \iota_j \ld_{n_j}^2 + O(1), \ld_{n} \le N, \ld_{n_1} \le N_1 \bigg\},
\]

\noi
which is again bounded by $(N N_2)^{\eps}$ by using Lemma \ref{LEM:c2}.
Thus we conclude \eqref{Tbs1}.
\end{proof}

% \begin{remark}\rm
% \label{countingR}
% If we restrict the range of $k_{\max}$, corresponding to $N_{\max}$, to balls of radius $N_{\med}$,
% then \eqref{Tbs} can be improved by replacing $N_{\max}^\eps$ by $N_{\med}^\eps$.
% \end{remark}

\subsection{Probabilistic lemmas}

In this section, we recall some probabilistic lemmas.
In this paper,
we use $\{g_k\}_{k =1}^\infty$ to denote a family of independent standard complex-valued Gaussian random variables on a probability space $(\O, \F, \mathbb{P})$.
 \begin{lemma}\cite[Lemma 3.4]{CO}.
 \label{LEM:growth}
 Let $\eps>0$ and $R>0$.
 Then,
 there exists $\O_{R,\eps} \subset \O$ with $\mathbb{P}(\O_{R,\eps}^c) < e^{-c R^2}$,
 such that
 \begin{align*}
     % \label{growth}
     |g_n (\o)| \le C R\jb{n}^\eps 
 \end{align*}
 \noi
 uniformly in $n \in \N$ for $\o \in \O_{R, \eps}$.
 \end{lemma}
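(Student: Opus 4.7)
The plan is to deduce this from the standard Gaussian tail bound together with a union bound, choosing the constant $C$ large enough to make the resulting sum summable in $n$.

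First I would recall that for a standard complex Gaussian $g_n$, one has the tail estimate
\begin{equation*}
\mathbb{P}\bigl(|g_n(\omega)| > t\bigr) \le 2 e^{-t^2/2}, \qquad t > 0.
\end{equation*}
Applying this with $t = CR\jb{n}^{\eps}$ for a constant $C>0$ to be chosen, I get
\begin{equation*}
\mathbb{P}\bigl(|g_n(\omega)| > CR\jb{n}^{\eps}\bigr) \le 2 e^{-\frac{1}{2}C^2 R^2 \jb{n}^{2\eps}}.
\end{equation*}

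Next I define the exceptional event
\begin{equation*}
\O_{R,\eps}^c \;=\; \bigcup_{n=1}^{\infty} \bigl\{ \omega \in \O \,:\, |g_n(\omega)| > CR \jb{n}^{\eps} \bigr\},
\end{equation*}
so that the desired pointwise bound holds on $\O_{R,\eps}$. By the union bound,
\begin{equation*}
\mathbb{P}(\O_{R,\eps}^c) \;\le\; \sum_{n=1}^{\infty} 2 e^{-\frac{1}{2}C^2 R^2 \jb{n}^{2\eps}}.
\end{equation*}

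The main (and really only) point is to absorb this sum into a single exponential of the form $e^{-cR^2}$. I would argue that since $\jb{n}^{2\eps} \ge 1$, we may split off one copy of $R^2$ and write $\tfrac{1}{2}C^2 R^2 \jb{n}^{2\eps} \ge cR^2 + \tfrac{1}{4}C^2 R^2 \jb{n}^{2\eps}$ for some $c>0$, provided $C$ is chosen large enough. Hence
\begin{equation*}
\mathbb{P}(\O_{R,\eps}^c) \;\le\; 2 e^{-cR^2} \sum_{n=1}^{\infty} e^{-\frac{1}{4}C^2 R^2 \jb{n}^{2\eps}}.
\end{equation*}
For $R \ge 1$, the remaining sum is bounded uniformly by $\sum_{n\ge 1} e^{-\frac{1}{4}C^2 \jb{n}^{2\eps}} < \infty$ (a convergent series since $\jb{n}^{2\eps}$ grows polynomially in $n$), giving $\mathbb{P}(\O_{R,\eps}^c) \lesssim e^{-cR^2}$; for $R < 1$ the claim is trivial by adjusting $c$. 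This yields the stated bound.

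I do not anticipate any real obstacle: the only mild subtlety is ensuring that the $\jb{n}^{2\eps}$ factor inside the exponent (with $\eps$ possibly very small) produces a convergent series, which is handled by taking $C$ large relative to $\eps$ so that the polynomial growth of $\jb{n}^{2\eps}$ still gives a summable bound. This is essentially verbatim the argument of \cite[Lemma 3.4]{CO} that is being cited.
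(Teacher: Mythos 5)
Your proof is correct in spirit and is exactly the standard tail‑bound‑plus‑union‑bound argument; the paper itself gives no proof but simply cites \cite[Lemma~3.4]{CO}, and your argument reproduces that reference's. One small caveat: your dismissal of the case $R<1$ as ``trivial'' is not quite right. As $R\to 0$, the event $\{|g_n|\le CR\jb{n}^\eps\ \forall n\}$ has probability tending to $0$ super‑polynomially fast (already $\mathbb{P}(|g_1|\le CR)\sim C^2R^2$, and the product over the $\sim (CR)^{-1/\eps}$ indices with $\jb{n}^\eps\lesssim (CR)^{-1}$ is exponentially small), so $\mathbb{P}(\O_{R,\eps}^c)\to 1$ strictly faster than $e^{-cR^2}\approx 1-cR^2$; the stated inequality therefore cannot hold for all small $R>0$. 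The statement is implicitly for $R\gtrsim 1$ (which is how \cite{CO} phrases it, and is the only regime used in this paper), and with that understanding your argument closes: choosing $C$ large enough makes $\sum_{n\ge 1}e^{-\frac14 C^2\jb{n}^{2\eps}}\le \tfrac12$, turning the $\lesssim e^{-cR^2}$ into a genuine $<e^{-cR^2}$ after a harmless adjustment of $c$.
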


We recall the following conditional Wiener chaos estimate.

\begin{lemma}\cite[Lemma 4.1]{DNY2}.
\label{LEM:condchaos}
Let $E\subset \N$ be a finite subset, and let $\mathcal{B}$ be the $\s$-algebra generated by $\{g_{k}: k\in E\}$. Let $\mathcal{C}$ be a $\s$-algebra independent with $\mathcal{B}$ and let $\mathcal{C}^{+}$ be the smallest $\s$-algebra containing both $\mathcal{C}$ and the $\s$-algebra generated by $\{|g_{k}|^{2}: k\in E\}$.  
Assume that in the support of $a_{k_1 \ldots, k_{n}}$, there is no pairing in $\{k_1,\ldots, k_n\}$.
Consider the expression
\begin{align*}
F(\o) = \sum_{(k_1,\ldots, k_{n})\in E^{n}} a_{k_1 \ldots k_n}(\o) \prod_{j=1}^{n} g_{k_j}(\o)^{\iota_j}.
\end{align*}
where $n\leq 2r+1$, $\iota_j \in \{\pm\}$ and the coefficients $a_{k_1 \ldots k_n}(\o)$ are $\mathcal{C}^{+}$-measurable. Let $A\geq |E|$. Then, $A$-certainly, we have 
\begin{align*}
|F(\o)| \leq A^{\ta} \bigg( \sum_{k_1,\ldots, k_n} |a_{k_1,\ldots,k_n}(\o)|^2\bigg)^{\frac12}.
\end{align*}
\end{lemma}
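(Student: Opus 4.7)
The plan is to exploit the polar decomposition of complex Gaussians, writing $g_k = r_k z_k$ with $r_k = |g_k|$ and $z_k = g_k/|g_k|$, and to work conditionally on $\mathcal{C}^+$. By the classical polar decomposition, $\{r_k\}_{k \in E}$ and $\{z_k\}_{k \in E}$ are independent, with $\{z_k\}_{k \in E}$ being i.i.d.\ uniformly distributed on the unit circle. Since $\mathcal{C}$ is independent of $\mathcal{B}$, and $\mathcal{C}^+$ is generated by $\mathcal{C}$ together with $\{|g_k|^2\}_{k \in E}$, the phases $\{z_k\}_{k \in E}$ remain an i.i.d.\ Steinhaus family, independent of $\mathcal{C}^+$. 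Substituting gives
\[
F(\o) = \sum_{(k_1,\ldots,k_n) \in E^n} b_{k_1 \cdots k_n}(\o) \prod_{j=1}^n z_{k_j}(\o)^{\iota_j}, \qquad b_{k_1 \cdots k_n} := a_{k_1 \cdots k_n} \prod_{j=1}^n r_{k_j},
\]
where the coefficients $b_{k_1 \cdots k_n}$ are $\mathcal{C}^+$-measurable.

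I would next verify that, conditionally on $\mathcal{C}^+$, $F$ is a genuine homogeneous chaos of degree $n$ in $\{z_k\}_{k \in E}$. The no-pairing hypothesis plays the key role here: it forces that at each frequency $k$, the signs $\iota_j$ over indices $j$ with $k_j = k$ are either all $+$ or all $-$. Hence every monomial $\prod_j z_{k_j}^{\iota_j}$ equals $\prod_k z_k^{\alpha_k}$ with $\alpha_k \neq 0$ whenever $k$ appears, which has zero conditional mean, and distinct monomials are orthogonal up to permutation of the index tuple. Setting $\s^2 := \E\bigl[|F|^2 \,\big|\, \mathcal{C}^+\bigr]$, this gives
\[
\s^2 \les \sum_{(k_1,\ldots,k_n)} |a_{k_1 \cdots k_n}(\o)|^2 \prod_{j=1}^n r_{k_j}(\o)^2,
\]
with implicit constant at most $n!$, which is absorbed since $n \leq 2r+1$ is fixed. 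Hypercontractivity for Steinhaus chaos of bounded degree then furnishes, conditionally on $\mathcal{C}^+$, the subexponential tail
\[
\mathbb{P}\bigl(|F| > \lambda \s \,\big|\, \mathcal{C}^+\bigr) \leq C \exp(-c \lambda^{2/n}).
\]

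Finally, I would dispose of the magnitudes via a union bound. Since $\mathbb{P}(|g_k| > M) \leq e^{-c M^2}$ and $|E| \leq A$, there exists an event $\O_0$ with $\mathbb{P}(\O_0^c) \leq C \exp(-c A^{2\e})$ on which $r_k \leq A^{\e}$ uniformly for $k \in E$, for any small $\e > 0$. On $\O_0$, we therefore have $\s^2 \les A^{2 n \e} \sum |a_{k_1 \cdots k_n}|^2$. Choosing $\e = \ta/(4 n)$ and $\lambda = A^{\ta/2}$, then integrating the conditional tail bound over $\mathcal{C}^+$ and combining with $\mathbb{P}(\O_0^c)$ produces the desired $A$-certain bound, with an exponential rate $\ta' > 0$ depending only on $\ta$ and $n$.

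The main obstacle is the careful identification of the conditional chaos structure. One must verify that the $\mathcal{C}^+$-measurable coefficients are compatible with $\{z_k\}_{k \in E}$ remaining an i.i.d.\ Steinhaus family independent of $\mathcal{C}^+$; this is precisely the content of the complex Gaussian polar decomposition together with the hypothesis that $\mathcal{C}$ is independent of $\mathcal{B}$. The no-pairing assumption is then exactly what guarantees that every conditional monomial has mean zero (it rules out cancellation at any single frequency), after which the standard hypercontractive tail combined with the magnitude union bound closes the argument.
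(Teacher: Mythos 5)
Your proposal is correct and follows the same route as the cited \cite[Lemma 4.1]{DNY2} (which the paper does not reprove): polar decomposition $g_k = r_k z_k$, the observation that independence of $\mathcal B$ from $\mathcal C$ together with modulus--phase independence makes $\{z_k\}_{k\in E}$ an i.i.d.\ Steinhaus family independent of $\mathcal C^+$, the no-pairing condition to guarantee zero conditional mean and (up to the at-most-$n!$ sign-preserving relabelings) orthogonality of the monomials, Steinhaus hypercontractivity for the conditional tail, and an $A$-certain uniform bound $|g_k|\lesssim A^{\eps}$ to dispose of the modulus factors. The one place to be slightly more careful in the write-up is the orthogonality count: tuples giving the same exponent vector are not arbitrary permutations but permutations preserving the fixed sign pattern $\{\iota_j\}$, which is exactly what yields the $n!$ bound you invoke, so the conclusion stands.
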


\subsection{Scaling heuristics}  \label{SEC:scaling}

In this section, we give some heuristic computations to justify the scaling critical threshold and the probabilistic scaling for \eqref{NLS}. The computations rely on some of the notation and results discussed above which is why we include this discussion here. We apply the arguments from \cite[Section 1.2]{DNY2}. 

The heuristic is essentially based on the fact that if we expect \eqref{NLS} to be locally well-posed in $H^{s}(\mathbb{D})$ for some $s\in \R$, then whenever the initial data $u_0$ has roughly unit norm $\|u_0\|_{H^{s}}\sim 1$, then we should expect the first Picard iterate (at say $t=1$) in \eqref{Picard1}
to also have norm roughly one; $\| u^{(1)}(1)\|_{H^{s}}\sim 1$. 
Fix $s\in \R$ and $N\gg 1$ dyadic and choose $u_0$ so that $\ft{u_0}(n) = N^{-(s+\frac 12)} \ind_{E_{N}\setminus E_{N/2}}$. It follows from \eqref{ev} that $\|u_0\|_{H^s}\sim 1$.
Taking the Fourier transform, we essentially have
\begin{align*}
\F\{ u^{(1)}\}(n) &\sim \sum_{ \substack{n_1, \ldots, n_{2k+1} \\ |n_j|\les N}} \frac{1}{\jb{\Phi(\cj{n})}} \prod_{j=1}^{2k+1} \ft u_0(n_j)  c(n,n_1,\ldots,n_{2k+1}).
\end{align*}
We will assume that $n_1, n_2, \ldots, n_{2k+1} \sim N$ and, to simplify the computations, that $n \in E_N$ so that $n\les n_1$. Then, ignoring logarithmic losses, we have
\begin{align*}
\ind_{n\in E_{N}}|\F\{ u^{(1)}(1)\}(n)| \les N^{-(2k+1)(s+\frac 12)}   \sup_{|m|\les N^2} \bigg| \sum_{n_1,\ldots, n_{2k+1}} {\mathrm T}_{n n_1 \cdots n_{2 k+1}}^{\mathrm{b},m} \bigg| \| c(\cj{n})\|_{\l^{\infty}_{nn_1 \cdots n_{2k+1}}}
\end{align*}
Assuming there is no pairing, by Lemma \ref{LEM:ev1} and Lemma \ref{LEM:Tbs}, we have
\begin{align}
\sup_{|m|\les N^2}\sup_{n\in E_{N}} \bigg| \sum_{n_1,\ldots, n_{2k+1}} {\mathrm T}_{n n_1 \cdots n_{2 k+1}}^{\mathrm{b},m} \bigg| \| c(\cj{n})\|_{\l^{\infty}_{nn_1 \cdots n_{2k+1}}} \les N^{2k-1}N^{k-1} \sim N^{3k-2}. 
\label{scalecount}
\end{align}
Therefore,
\begin{align*}
\|  \P_{\leq N} u^{(1)}(1)\|_{H^{s}} \sim N^{-(2k+1)(s+\frac 12)}N^{s+\frac 12} N^{3k-2} \sim N^{-2k(s+\frac 12)+3k-2} \les 1
\end{align*}
provided that $s>1-\frac 1k$, which recovers the scaling criticality regularity $s_{\text{crit}}(k)$ given in \eqref{scrit}.

The heuristic for the probabilistic scaling asserts that due to multilinear random oscillations \'{a} la the central limit theorem, we expect a square root gain in the ``counting" bound \eqref{scalecount}; see \cite{DNY2, DNYscale}. Thus, in the probabilistic setting, we have
\begin{align*}
\|  \P_{\leq N} u^{(1)}\|_{H^{s}}  \sim N^{-2k(s+\frac 12)+2k-\frac32} \les 1
\end{align*}
provided that $s>\frac{1}{2}-\frac{3}{4k}$. This justifies the significance of the probabilistic scaling index $s_{\text{p}}(k)$ given in \eqref{prob_scaling}.

%
%This lemma follows from the hypercontractivity of the Ornstein-Uhlenbeck
%semigroup due to Nelson \cite{Nelson2}.

%We remark that if there are pairings in $n_A$, then similar estimates as \eqref{contbd} hold.
%For instance, if there is simple pairings $n_i = n_j$ with $\zeta_i = - \zeta_j$,
%then we simply replace $|g_{n_i}|^2 $ by 1.
%This due to the fact
%\[
%g_{n_i}^{\zeta_i} g_{n_j}^{\zeta_j} = (|g_{n_i}|^2 - 1) + 1
%\]
%
%\noi
%and $\E(|g_{n_i}|^2 - 1) = 0$,
%so any large deviation estimate of linear combinations of $g_k$ will also hold for linear combination of $|g_{n_i}|^2 - 1$.
%Therefore, the contribution of $|g_{n_i}|^2 - 1$ can be treated in the same way as the non-pairing cases.

%--------------

\section{Outline of the proof} \label{SEC:Ansatz}
In this section,
we exploit the idea of the random averaging operator in \cite{DNY2} to introduce
a random resonant operator,
which is designed to remove a troublesome resonance term in the nonlinearity.
We then construct a strong solution to \eqref{NLS} through an induction.  

\subsection{Reformulation}
Let $N\geq1$ be a dyadic number and let $u_N(t)$ be the solution to \eqref{Duhamel}. We consider the interaction representation $v_{N}(t) = S(-t)u_{N}(t)$, where $S(t)$ is the linear propagator defined in \eqref{St}.
Then $v_N$ satisfies the integral equation
\begin{align}
    v_N (t) = \P_{\le N} u_0 - i \int_0^t \P_{\le N} \mathcal M(v_N, \ldots, v_{N}) (t') dt',
    \label{NLS_N2}
\end{align}
where the $(2k+1)$-linear operator $\mathcal{M}$ is defined via its Fourier coefficients
\begin{align}
\jb{ \mathcal M (v_1, \ldots, v_{2k+1}) ,e_n}  = \sum_{n_1,\ldots, n_{2k+1}}  e^{-it \Phi(\bar n)}c(\cj{n}) \prod_{j=1}^{2k+1} (v_j)_{n_j}^{\iota_j} 
    \label{Mk}
\end{align}
with $v_n = \jb{v,e_n} = \int_{\D} v(x) e_n(x) dx$,
\begin{align}
\iota_j &= 
\begin{cases}
1 &\quad  \text{if}  \quad  j\in \{1,\ldots, 2k+1\}\,\, \text{is odd}\\
-1&\quad  \text{if} \quad j\in \{1,\ldots, 2k+1\}\,\, \text{is even}, 
\end{cases} \notag 
\end{align}
$\Phi(\cj{n})$ is the phase function from \eqref{phase}, and $c(\cj{n})$ is given in \eqref{c_n}. Note that $v_{2}\equiv 0 .$ 
Starting with the system \eqref{NLS_N2},
let $y_N = v_N - v_{N/2}$ which satisfies the integral equation
\begin{align}
    \begin{split}
        y_N (t) & = \P_N u_0 - i \sum_{N_{\max} = N} \int_0^t \P_{\le N} \mathcal M (y_{N_1}, \cdots, y_{N_{2k+1}}) (t') dt'\\
        & \hphantom{XX} - i \sum_{N_{\max} \le N/2} \int_0^t \P_{N} \mathcal M (y_{N_1}, \cdots, y_{N_{2k+1}}) (t') dt',
    \end{split}
    \label{NLS_N3}
\end{align}

\noi
where $N_{\max} = \max (N_1, N_2, \cdots N_{2k+1})$.
Note that 
\begin{align}
y_{1}=y_{2}\equiv 0 \quad \text{and} \quad  y_{2^{2}}=v_{2^{2}}. \label{y2case}
\end{align}
It turns out that the main difficulty comes from a resonance in the high-low term (second on the right-hand side of \eqref{NLS_N3}), which will be approximated in the following section.

\subsection{The random resonant operator}
Let $L \in 2^{\N\cup\{0\}}$ be such that $1 \le L \le N/2$ and
consider the following linear equation for a function $\Psi=\Psi(t,x)$ at spatial frequency $n$:
\begin{align}
\partial_t \Psi_n (t) = - (k+1) i \int_{\mathbb{D}} \P_{\le N} \mathcal M \big( \{\Psi \}_{n}, v_L, \cdots, v_L \big)(t',x) e_n(x) dx, 
    \label{Psi}
\end{align}
\noi
where  $\{ \Psi\}_{n} (x) = \Psi_{n} e_{n}(x)$.
%\begin{align*}
%\{ \Psi\}_{n} (x) : = \sum_{m\in \N} \Psi_{m}\dl_{mn} e_{m}(x) = \Psi_{n} e_{n}(x).
%\end{align*}
 We recover $\Psi$ via $\Psi= \sum_{n=1}^{\infty} \{\Psi\}_{n}$.
%We remark that, since $v_{L}$ is supported on $E_{L}$ $\eqref{Psi}$ is a decoupled system for fixed $L$ and $z_n > \frac{N}2$.
If \eqref{Psi} has initial data $\Psi(0) = \Psi_0$,
then in order to solve \eqref{Psi},
it suffices to solve the integral equation
\begin{align}
\Psi(t) - \Psi_0 = \mathcal P_{N,L} [\Psi (t)],
    \label{Psi2}
\end{align}

\noi
where $\mathcal P_{N,L}$ is the linear operator defined by
\begin{align}
    \mathcal P_{N,L} [f] &= - (k+1) i \sum_{n\in E_{N}} e_n \int_0^t \int_{\D} \P_{\le N} \mathcal M (\{f\}_{n}, v_L, \cdots, v_L)(t',x)  e_{n}(x) dx dt'     \label{PNL} .
\end{align}

\noi
For $n\in E_{N}$, we define the complex functions
\begin{align}
\G^{N,L}_{n}(t) 
: =
 -(k+1)i  \sum_{n_2,\ldots, n_{2k+1}} e^{-it \Phi(\cj{n})} c(n,n,n_2,\ldots, n_{2k+1}) \prod_{j=2}^{2k+1} \ft{(v_L)}_{n_j}^{\iota_j}(t), \label{Gamma}
\end{align}
and  $\G^{N,L}_{n}(t) =0$ for  $n\in \N \setminus E_{N}$.
Then we may rewrite \eqref{Psi} as
\begin{align*}
\partial_t \Psi_n (t) = \Gamma_n^{N,L} (t) \Psi_n (t)
\end{align*}
which, together with the initial condition $\Psi_n (0)$, implies that
\begin{align}\label{Psi_2}
\Psi_n (t) = \Psi_n (0) \exp \bigg(\int_0^t \Gamma_n^{N,L} (t') dt' \bigg)
\end{align}
% Then it follows from \eqref{PNL} and that $\P_{>N}\mathcal{P}_{N,L}f = 0$, that 
% \begin{align}
% \mathcal{P}_{N,L}[e_{n}] = \G_{n}^{N,L}(t) e_{n} \label{PNLsol}
% \end{align}
% for any $n\in \N$.
At a formal level, 
we may also solve \eqref{Psi2} to get
\begin{align}
    \Psi (t) = (1- \mathcal P_{N,L})^{-1} \Psi_0 %= \sum_{n \in \N} \frac{1}{1-\G^{N,L}_{n}(t)} \jb{\Psi_{0},e_n}e_{n}
    \label{Psi3}
\end{align}
% provided that $\text{Re} \,[\G^{N,L}_{n}(t)] \neq 1$ whenever $\text{Im}\, \G^{N,L}_{n}(t)= 0$, which would be reasonable to expect for short times $t$. In Lemma~\ref{LEM:unitary}, we show that actually $\text{Re} \,[\G^{N,L}_{n}(t)] =0$, so this is never an issue!
Continuing on, we denote
\begin{align*}
    \mathcal H^{N,L} = (1- \mathcal P_{N,L})^{-1}
   % \label{HNL}
\end{align*}
and from \eqref{Psi_2} and $\{\Psi_0\}_n = \Psi_n (0) e_n$, it follows that
\begin{align}
H^{N,L}_n (t) = \exp \bigg(\int_0^t \Gamma_n^{N,L} (t') dt' \bigg)
\end{align}
so that 
\[
 \mathcal H^{N,L} [e_n]  (x) = H^{N,L}_n (t)  \cdot e_n(x) \quad \text{for} \quad n\in E_{N}
\]
and $H^{N,L}_n (t)=1$ for $n\in \N \setminus E_{N}$.
With these notations, the solution to \eqref{Psi} can be written as
\begin{align}
\Psi (t) = \mathcal H^{N,L} [\P_N \Psi_0] = \sum_{n\in E_{N}\setminus E_{N/2}} H^{N,L}_n (t) 
 \jb{\Psi_0, e_n} e_n.
    \label{Psi4}
\end{align}

\noi
When we set $\Psi_0 = \P_N u_0$,
the corresponding solution \eqref{Psi4} to \eqref{Psi} will be denoted by $\psi_{N,L}$, i.e.
\begin{align}
    \psi_{N,L}(t) = \mathcal H^{N,L} [\P_N u_0] = \sum_{n\in E_{N}\setminus E_{N/2}}  H^{N,L}_n (t) \cdot \frac{g_n (\o)}{\pi \ld_n} \cdot e_n. \label{psiNL}
\end{align}
Note that the restriction in the summation in \eqref{psiNL} arises from the definition of $\P_{N}$ and the fact that $n=n_1$.
\noi
From \eqref{PNL}, we then obtain
\begin{align}
\partial_t \psi_{N,L} = - (k+1) i \sum_{n\in E_{N}}e_n \int_{\D} \mathcal M (\{\psi_{N,L}\}_{n}, v_L, \cdots, v_L)(t',x)  e_{n}(x) dx,
\label{psi1}
\end{align}

\noi
with initial condition $\psi_{N,L} (0) = \P_N u_0^\o$.
Therefore, we have
\begin{align}
\psi_{N,L}(t) = \P_N u_0 - (k+1) i  \int_0^t  \sum_{n\in E_{N}}e_n \int_{\D} \mathcal M (\{\psi_{N,L}\}_{n}, v_L, \cdots, v_L)(t',x)  e_{n}(x) dx dt'.
\label{psi2}
\end{align}

\noi
It is clear that \eqref{psi2} consists of resonant terms in high-low interaction at frequency $N$. 
We refer to \eqref{psi2} as the random resonant operator, since it resembles the random averaging operator introduced in \cite{DNY2}. A significant simplification here as compared to \cite{DNY2} is that the kernel for the linear operator $\mathcal{H}^{N,L}$ is a diagonal matrix, while in  \cite{DNY2} the kernel of the random averaging operator has non-zero off-diagonal components. 

From the above construction, 
we observe the following crucial unitarity property.

\begin{lemma}
\label{LEM:unitary}
    For fixed $t \in [0,T]$, the operator $\mathcal H_n^{N,L} (t)$ is unitary. In particular, we have
    \begin{align}
H_{n}^{N,L}(t) = \exp\bigg(-i(k+1) \int_{0}^{t} \int_{\mathbb{D}} e_n^2 (x)  |u_{L} (t',x)|^{2k}  dx dt \bigg), \label{HnNL}
\end{align}
  and hence   $|H_n^{N,L} (t)| = 1$ for all $t \in \R$.
\end{lemma}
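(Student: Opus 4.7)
The plan is to identify $\Gamma_n^{N,L}$ explicitly by undoing the interaction representation in physical space, and then observe that the resulting integrand is $-i$ times a non-negative real quantity. From the derivation of \eqref{Psi_2}, we already have
\begin{equation*}
H_n^{N,L}(t) = \exp\Bigl(\int_0^t \Gamma_n^{N,L}(t')\,dt'\Bigr) \quad \text{for every } n\in E_N,
\end{equation*}
(and $H_n^{N,L}\equiv 1$ for $n\notin E_N$, where unitarity is trivial), so both conclusions of the lemma will follow from the identity
\begin{equation*}
\Gamma_n^{N,L}(t) \; = \; -i(k+1)\int_{\mathbb{D}} e_n^2(x)\,|u_L(t,x)|^{2k}\,dx.
\end{equation*}

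To establish this identity I would bypass the Fourier representation \eqref{Gamma} and work directly from \eqref{Psi}. By construction $\mathcal M(v_1,\dots,v_{2k+1}) = S(-t)\,\mathcal N(S(t)v_1,\dots,S(t)v_{2k+1})$, where $\mathcal N$ is the $(2k+1)$-linear form defined below \eqref{psiintro}. Since $\{\Psi\}_n(x) = \Psi_n(t)\,e_n(x)$ and $S(t)e_n = e^{-it\lambda_n^2}e_n$, a short computation using that among the inputs indexed $j=2,\dots,2k+1$ there are exactly $k$ conjugate and $k$ non-conjugate factors of $u_L$ yields
\begin{equation*}
\mathcal N\bigl(S(t)\{\Psi\}_n,\,u_L,\dots,u_L\bigr)(t,x) = \Psi_n(t)\,e^{-it\lambda_n^2}\,e_n(x)\,|u_L(t,x)|^{2k}.
\end{equation*}
Applying $S(-t)$, pairing against $e_n$, and using that $\P_{\le N}$ acts as the identity against $e_n$ for $n\in E_N$, the phase $e^{-it\lambda_n^2}$ is exactly cancelled by the transform relation $\widehat{S(-t)f}(n) = e^{it\lambda_n^2}\widehat{f}(n)$, leaving the scalar factor $\Psi_n(t)\int_{\mathbb{D}} e_n^2\,|u_L|^{2k}\,dx$. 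Multiplying by $-(k+1)i$ in accordance with \eqref{Psi} and comparing with the ODE $\partial_t \Psi_n = \Gamma_n^{N,L}\Psi_n$ yields the claimed identity for $\Gamma_n^{N,L}$.

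With the identity in hand, integrating in time produces the explicit formula \eqref{HnNL}, and the unitarity $|H_n^{N,L}(t)| = 1$ is immediate from the reality and non-negativity of $e_n^2\,|u_L|^{2k}$. I do not anticipate any genuine obstacle here: the conceptual content is that the RRO is designed precisely to isolate the resonance $n_1=n$, whose residual phase in \eqref{phase} collapses to zero, so the Fourier sum \eqref{Gamma} reassembles into the positive physical-space product $e_n^2\,|u_L|^{2k}$; the combinatorial prefactor $(k+1)$ in \eqref{Psi} is exactly the number of ways to place $\{\Psi\}_n$ into a $u$-type slot in $|u|^{2k}u$, consistent with this reassembly.
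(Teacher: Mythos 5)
Your proof is correct and takes essentially the same approach as the paper: you identify $\Gamma_n^{N,L}$ as $-i(k+1)\int_{\mathbb{D}} e_n^2|u_L|^{2k}\,dx$ by undoing the interaction representation at the resonance $n_1=n$, which is exactly the computation in the paper's ``Alternatively'' paragraph (and is equivalent to deriving the ODE for $H_n^{N,L}$). The only quibble is the sign in the intertwining identity $\mathcal M = S(\mp t)\,\mathcal N(S(\pm t)\,\cdot)$, which should be checked against the paper's conventions in \eqref{Mk} and \eqref{phase}; this does not affect the conclusion since it only flips the sign of the purely imaginary $\Gamma_n^{N,L}$.
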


\begin{proof}
Notice that when $n=n_1$, $\Phi(\cj{n}) = \sum_{j=2}^{2k+1}\iota_j \ld_j^{2}$, that is, the dependence of $\Phi(\cj{n})$ on $(n,n_1)$ is removed.
Thus, from \eqref{psiNL} and \eqref{psi1}, we see that
\begin{align}
\label{RAO3}
\begin{cases}
i \partial_t H^{N,L}_n (t) = (k+1) H^{N,L}_n (t) \jb{  |u_{L} (t)|^{2k} e_n ,\, e_n}\\
H^{N,L}_n (t)|_{t=0} = 1.
\end{cases}
\end{align}
where
\begin{align*}
\jb{  |u_{L} (t)|^{2k} e_n ,\, e_n} = \int_{\mathbb{D}} e_n^2 (x)  |u_{L} (t,x)|^{2k}  dx 
\end{align*}
is real. The unique solution to the linear ODE \eqref{RAO3} is then \eqref{HnNL}.

Alternatively, we see from \eqref{Gamma} that
\begin{align*}
\G_{n}^{N,L}(t)&= -(k+1)i \int_{\mathbb{D}} e_{n}(y)^2 |u_{L}(t,y)|^{2k}dy , 
\end{align*}
when $n\in E_{N}$,
with $\text{Re} \, \G_{n}^{N,L}(t)=0$. 
\end{proof}

Recall that $H^{N,L}_n (t)$ is $\mathcal B_{\le L}$-measurable. In particular, since $L\leq \frac{N}{2}$, $H^{N,L}_n (t)$ is independent of $\{g_n (\o)\}_{n\in E_{N}\setminus E_{N/2}}$.
Therefore, the term $\psi_{N,L}$ can be seen as the same as $\P_N u_0^\o$ conditioned on $\mathcal B_{\le L}$ since, by Lemma~\ref{LEM:unitary} and the invariance of complex Gaussian random variables under (random but independent) rotations, we have
$
{\rm Law} (\psi_{N,L} (t)) = {\rm Law} (\psi_{N,L}  (0)) = {\rm Law} (\P_N u_0^\o).
$
We will exploit this fact to show that the random resonant terms $\psi_{N,L}$ enjoy the same regularity properties (in terms of Strichartz spaces) as the initial data $\P_{N}u_0$. See Lemma~\ref{LEM:TypeC}.

In what follows, we denote
\begin{align}
    \mathfrak h^{N,L}: = \mathcal H^{N,L} - \mathcal H^{N,\frac{L}2}, \quad 
    \zeta^{N,L}: = \psi_{N,L} - \psi_{N,\frac{L}2} = \hf^{N,L}[\P_{N} u_0] .
    \label{zeta}
\end{align}
One of the key observation here is that $\mathfrak{h}^{N,L}$ and $\mathcal H^{N,L}$ are Borel functions of $(g_n (\o))_{n\in E_{N/2}}$ and thus are independent of the Gaussians in $\P_N u_0^\o$.

\subsection{The decomposition}
Now we are ready to introduce the ansatz.
We note that the ansatz here is similar to the one in \cite{DNY2}, but with the random averaging operator being replaced by the random resonant operator \eqref{psi2}. 
Given $\kk>0$ as in \eqref{gamma}, we define
\begin{align*}
L_{N} = \max\{ L\in 2^{\N} \,: \, L <N^{1-\kk}\}.
%\label{LN}
\end{align*}
Let $z_N$ be a remainder term defined via
\begin{align}
    y_N (t) = \psi_{N,L_{N}} (t) + z_N(t).
    \label{decom1}
\end{align}
From \eqref{NLS_N3},
one may check that $z_N$ solves the equation
\begin{align}
\begin{split}
z_N (t) +  \psi_{N,L_{N}} (t) & =  \P_N u_0 
- i   \sum_{N_{\max} = N_{\med} = N }   \int_0^t \P_{\le N}   \mathcal{M} ( y_{N_1} , \cdots y_{N_{2k+1}}) (t')  dt' \\
& - i  \sum_{N_{\max} = N, \,N_{\med} \le \frac{N}2}   \int_0^t \P_{\le N}   \mathcal{M} ( y_{N_1} , \cdots, y_{N_{2k+1}}) (t')  dt'  \\
 & -i  \sum_{N_{\max} \le \frac{N}2}  \int_0^t \P_N \mathcal{M} (y_{N_1}, \cdots, y_{N_{2k+1}}) (t') dt', \\
 =  - i &  \sum_{N_{\max} = N_{\med} = N }   \int_0^t \P_{\le N}   \mathcal{M} ( y_{N_1}, \cdots y_{N_{2k+1}}) (t')  dt' \\
& - (k+1)i  \sum_{N_j \leq \frac{N}2,\, j \neq 1}   \int_0^t \P_{\le N}   \mathcal{M} ( z_{N} ,  y_{N_2}, \cdots, y_{N_{2k+1}}) (t')  dt'  \\
& - (k+1)i  \sum_{N_j \leq \frac{N}2,\, j \neq 1}   \int_0^t \P_{\le N}   \mathcal{M} ( \psi_{N,L_{N} },  y_{N_2}, \cdots, y_{N_{2k+1}}) (t')  dt'  \\
& - k i  \sum_{N_j \leq  \frac{N}2, \, j \neq 2}   \int_0^t \P_{\le N}   \mathcal{M} ( y_{N_1} ,  y_{N}, y_{N_3}, \cdots, y_{N_{2k+1}}) (t')  dt'  \\
 & -i  \sum_{N_{\max} \le \frac{N}2}  \int_0^t \P_N \mathcal{M} (y_{N_1}, \cdots, y_{N_{2k+1}}) (t') dt',
\end{split}
\label{eqn:zn0}
\end{align}

\noi 
where in the second equality we used symmetry amongst the odd and even indexed inputs giving rise to the combinatorial factors $(k+1)$ and $k$.
It turns out that the third term on the right-hand side of the second equality in \eqref{eqn:zn0} is the cause of the main difficulty.
In particular, there is a resonance occurring at high-low to high interaction.
Expanding this term further, we see that
\begin{align}
&-  (k+1)i  \sum_{N_j \leq \frac{N}2,\, j \neq 1}   \int_0^t \P_{\le N}   \mathcal{M} ( \psi_{N,L_{N}} ,  y_{N_2}, \cdots, y_{N_{2k+1}}) (t')  dt' \notag \\
& = - (k+1)i \sum_{ \substack{ n \in E_N \\ n_1 \in E_N \backslash E_{N/2} }} e_{n}  \sum_{N_j \leq  \frac{N}2,\, j \neq 1} \int_0^t  \int_{\D} \mathcal{M} ( \{\psi_{N,L_{N}}\}_{n_1},  y_{N_2}, \cdots, y_{N_{2k+1}}) (t',x) e_n(x) dx  dt'\notag \\
& = - (k+1)i \sum_{ \substack{ n \in E_N \\ n_1 \in E_N \backslash E_{N/2} \\ n \neq n_1}} e_n    \sum_{N_j \leq L_{N},\, j \neq 1} \int_0^t  \int_{\D} \mathcal{M} (\{\psi_{N,L_N}\}_{n_1},  y_{N_2}, \cdots, y_{N_{2k+1}}) (t',x) e_n(x) dx  dt' \notag\\
& \hphantom{XX}  - (k+1)i \sum_{n \in E_N \backslash E_{N/2}} e_n  \sum_{ N_{\max} \leq L_{N}  } \int_0^t  \int_{\D} \mathcal{M} ( \{\psi_{N,L_N}\}_{n},  y_{N_2}, \cdots, y_{N_{2k+1}}) (t',x) e_n(x) dx  dt'  \notag\\
& \hphantom{XX}   - (k+1)i \sum_{  \substack{N_j \leq \frac{N}{2},\, j \neq 1 \\  N_{\max} >L_{N} }}  \int_0^t \P_{\le N}   \mathcal{M} ( \psi_{N,L_{N}} ,  y_{N_2}, \cdots, y_{N_{2k+1}}) (t')  dt' \notag\\
&  = R_{1} + R_2 + R_3,\label{expand}
\end{align}
where we used the multilinearity of $\mathcal M$ given in \eqref{Mk}.
\noi
Now we have isolated the most dangerous term which is the resonant part $R_2$ in \eqref{expand}.
Due to the construction of $\psi_{N,L_{N}}$ in \eqref{psi2},
we have $R_2 = \psi_{N,L_{N}} (t) - \P_N u_0,$
which results in a cancellation of $R_2$ in \eqref{eqn:zn0}. 
We point out that in this computation it is important that the other inputs $\{v_{N_j}\}_{j=2}^{2k+1}$ are supported on low frequencies $\le L_{N}$ so that there are no additional conditions forcing a relation between the frequencies $(n_2,\ldots, n_{2k+1})$ (which there would be from our symmetrization in \eqref{eqn:zn0}) and so $R_{2}+\P_N u_0$ can be identified as $\psi_{N,L_{N}}$ in \eqref{psiNL}.
By collecting \eqref{eqn:zn0}, \eqref{expand}, and \eqref{psi2},
we get
\begin{align}
    z_N (t)  
=  - i & \sum_{N_{\max} = N_{\med} = N }   \int_0^t \P_{\le N}   \mathcal{M} ( y_{N_1}, \cdots y_{N_{2k+1}}) (s)  ds \notag \\
& - (k+1)i  \sum_{N_j \le \frac{N}2,\, j \neq 1}   \int_0^t \P_{\le N}   \mathcal{M} ( z_{N} ,  y_{N_2}, \cdots, y_{N_{2k+1}}) (t')  dt' \notag \\
& - (k+1)i \sum_{ \substack{ n \in E_N, n_1 \in E_N \backslash E_{N/2} \\ n\neq n_1 }  } e_n \sum_{  N_{\max} \leq L_{N}   } \notag\\
& \hphantom{XXXXX} \int_0^t  \int_{\D} \mathcal{M} ( \{\psi_{N,L_{N}}\}_{n_1},  y_{N_2}, \cdots, y_{N_{2k+1}}) (t',x) e_n(x) dx  dt' \label{eqn:zn1} \\
&- (k+1)i \sum_{  \substack{N_j \le \frac{N}{2},\, j \neq 1 \\  N_{\max} >L_{N} }}  \int_0^t \P_{\le N}   \mathcal{M} ( \psi_{N,L_{N}} ,  y_{N_2}, \cdots, y_{N_{2k+1}}) (t')  dt' \notag\\
& - k i  \sum_{N_j \le \frac{N}2, \, j \neq 2}   \int_0^t \P_{\le N}   \mathcal{M} ( y_{N_1} ,  y_{N}, y_{N_3}, \cdots, y_{N_{2k+1}}) (t')  dt' \notag \\
 & -i  \sum_{N_{\max} \le \frac{N}2}  \int_0^t \P_N \mathcal{M} (y_{N_1}, \cdots, y_{N_{2k+1}}) (t') dt', \notag
\end{align}
The main advantage of this formulation is the appearance $n \neq n_1$ in the third term of the right-hand side of \eqref{eqn:zn1},
which manifests the non-pairing at high-low to high interaction.
This restriction comes from the removal of $\psi_{N,L_{N}}$ from $y_N$, i.e. \eqref{decom1}.
In the following, 
we shall consider a time-localized version of \eqref{NLS_N3},
\begin{align}
    \begin{split}
    z_N (t)  &
=  - i  \sum_{N_{\max} = N_{\med} = N }   \chi (t) \int_0^t \chi (t')  \P_{\le N}   \mathcal{M} ( y_{N_1}, \cdots y_{N_{2k+1}}) (t')  dt' \\
& \hphantom{XX}  - (k+1)i  \sum_{N_j \le \frac{N}2,\, j \neq 1}  \chi (t) \int_0^t \chi (t') \P_{\le N}   \mathcal{M} ( z_{N} ,  y_{N_2}, \cdots, y_{N_{2k+1}}) (t')  dt'  \\
&  \hphantom{XX} - (k+1)i \sum_{ n \in E_{N}, n_1 \in E_{N}\setminus E_{N/2}, n \neq n_1} e_n(x) \sum_{ \max_{j=2,\ldots, 2k+1}N_j \leq L_{N}} \\
& \hphantom{XXXXXX} \chi (t) \int_0^t \chi (t') \int_{\D} \mathcal{M} (  \{\psi_{N,L_{N}}\}_{n_1} ,  y_{N_2}, \cdots, y_{N_{2k+1}}) e_n(x) dx  dt'  \\
& \hphantom{XX}  - (k+1)i \sum_{  \substack{N_j \le \frac{N}{2},\, j \neq 1 \\  N_{\max} >L_{N} }}  \chi(t)\int_0^t  \chi(t')\P_{\le N}   \mathcal{M} ( \psi_{N,L_{N}} ,  y_{N_2}, \cdots, y_{N_{2k+1}}) (t')  dt'\\
&  \hphantom{XX} - k i  \sum_{N_j \le \frac{N}2, \, j \neq 2}  \chi (t) \int_0^t  \chi (t') \P_{\le N}   \mathcal{M} ( y_{N_1} ,  y_{N}, y_{N_3}, \cdots, y_{N_{2k+1}}) (t')  dt'  \\
 & \hphantom{XX} -i  \sum_{N_1, \cdots, N_{2k+1} \le \frac{N}2} \chi (t) \int_0^t \chi (t') \P_N \mathcal{M} (y_{N_1}, \cdots, y_{N_{2k+1}}) (t') dt'.
\end{split}
\label{eqn:zn2}
\end{align}
We write 
\begin{align}
z_{N}(t) = \text{RHS}\eqref{eqn:zn2}=\sum_{N_1,\ldots, N_{2k+1}}\sum_{J=1}^{6} \mathcal{A}_{N}^{(J)}( y_{N_1}, \ldots, y_{N_{2k+1}}) \label{zeqA}
\end{align}
By symmetry between the odd frequencies and the even frequencies, we assume that $N_1 =\max_{j=0,1,\ldots, k} N_{2j+1}$ and $N_2 = \max_{j=1,\ldots, k} N_{2j}$ in the supports of $ \mathcal{A}_{N}^{(1)}$  and  $\mathcal{A}_{N}^{(6)}$.
 Here in \eqref{eqn:zn2}, the random resonant term $ \psi_{N,L_{N}}$ is also time-localized, still denoted by $ \psi_{N,L_{N}}$, having the form
 \begin{align}
 \begin{split}
 \psi_{N,L_N} (t) = 
\chi(t)\P_N u_0 - (k+1) i  \I_{\chi}\bigg[\sum_{n\in E_{N}}e_n \int_{\D} \mathcal M (\{\psi_{N,L_N}\}_{n}, v_L, \cdots, v_L)(x)  e_{n}(x) dx \bigg].
 \end{split}
 \label{psi3}
 \end{align}

\subsection{The a priori estimates}

We now state the main a priori estimates, and prove that they imply Theorem \ref{THM:main}.

\begin{proposition}
\label{mainprop} 
Given $0< T \ll 1$, and let $I=[- T, T]$.  Let $b,b', b_{-}$ be as in \eqref{constants} and $\g, \kk>0$ be as in \eqref{gamma}.
For $M\in 2^{\N}$, consider the following statements, which we call $\textup{\textsf{Local}}(M)$:

\smallskip
\noi
\textup{(i)} For any $s\geq 0$, given $L_2, \cdots, L_{2k+1} < \min(M,N^{1-\kk})$, the operators $\mathcal P_{N,L_2,\ldots, L_{2k+1}}$ defined by 
\begin{equation}
\label{linmap}
\begin{split} 
\mathcal{P}_{N,L_2,\ldots, L_{2k+1}}[\psi] (t) & = - (k+1)i   \I_{\chi}\Big[ \sum_{n\in E_{N}} e_n \jb{ \mathcal{M} (\{\psi\}_{n},  y_{L_2}, \cdots, y_{L_{2k+1}}) ,e_n}  \Big]
\end{split}
\end{equation} 

\noi
satisfy
\begin{align}
\sup_{N} \sup_{L_{\max}< \min(M,N^{1-\kk})}  L_{\max}^{\ta_0} \| \Pp_{N,L_2,\ldots, L_{2k+1}}\|_{Y^{s, b}(I)\to Y^{s ,b}(I)} \les T^{b_{-}-b},
\label{PwZbb}
\end{align} 
for some $\ta_0>0$, which does not depend on $s$, and where $L_{\max}=\max(L_2, \cdots, L_{2k+1})$.

\smallskip
\noi
\textup{(ii)} For the operators $\mathfrak h^{N,L}$ in \eqref{zeta}, 
we have
\begin{equation}
\label{hNLbound}
\sup_{N}\max_{L<\min(M,N^{1-\kk})} L^{\theta_0} (\log L)^{-4k} \|\mathfrak h^{N,L} \|_{Y^{s, b}(I)\to Y^{s,b}(I)} \les   T^{b_{-}-b}, 
\end{equation} 

\noi
for $0 < \theta_0 \ll 1$.

\smallskip
\noi
\textup{(iii)} For the terms $z_N$, we have
\begin{equation}\label{mainz} 
\max_{N\leq M} N^{1-\g}\|z_N\|_{Y^{0,b}(I)}\leq 1.
\end{equation}

\noi
Given the above statements, there exists $\ta>0$ depending only on $(b,b_-, b', \g,\kk, \ta_0)$ such that
 \[\mathbb{P}(\textup{\textsf{Local}}(M/2)\wedge\neg\,\textup{\textsf{Local}}(M))\leq C_\theta e^{-(T^{-1}M)^\theta}.\]
\end{proposition}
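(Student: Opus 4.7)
The plan is a frequency-induction in the style of \cite{DNY2}, adapted to the geometric setting of the disc. We assume $\textup{\textsf{Local}}(M/2)$ holds deterministically, which fixes the behaviour of $z_N$, $\mathfrak h^{N,L}$, and $\mathcal P_{N,L_2,\ldots,L_{2k+1}}$ at all dyadic scales below $M/2$, and then construct the corresponding objects at the next scale $M$. Only the new high-frequency Gaussians $\{g_n\}_{n \in E_M \setminus E_{M/2}}$ are at play, and these are independent of the $\sigma$-algebra generated by the lower ones. Each probabilistic estimate we invoke produces an exceptional set of probability at most $C_\theta \exp(-(T^{-1}M)^\theta)$; summing over the finitely many terms involved yields the stated bound.

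The core step is (iii). We apply a fixed-point argument to \eqref{eqn:zn2} in $Y^{0,b}([-T,T])$, writing $z_M = \sum_{J=1}^{6}\sum_{N_1,\ldots,N_{2k+1}} \mathcal A_M^{(J)}$. The term $\mathcal A_M^{(2)}$ is linear in $z_M$ through $\mathcal P_{M,L_2,\ldots,L_{2k+1}}$, and by part (i) combined with Lemma \ref{PROP:short} it is a $T^{b_--b}$-contraction. The inhomogeneous terms are handled as follows. For $\mathcal A_M^{(1)}$ (two frequencies $\sim M$), expand each $y_{N_j} = \psi_{N_j,L_{N_j}} + z_{N_j}$ using the inductive decomposition; the $z_{N_j}$ pieces enjoy the $N_j^{-1+\gamma}$ gain from \eqref{mainz}, while the $\psi\times\psi$ contributions are controlled by conditional Wiener chaos (Lemma \ref{LEM:condchaos}) together with the counting bound of Lemma \ref{LEM:Tbs}, with the required non-pairing structure supplied by the frequency restriction. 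The terms $\mathcal A_M^{(3)}$ and $\mathcal A_M^{(4)}$ carry $\psi_{M,L_M}$ with low or mid-frequency partners; here the built-in constraint $n\neq n_1$ removes the only dangerous resonance, and the off-diagonal decay of the eigenfunction correlation $c(\bar n)$ from Lemma \ref{LEM:ev5} combines with physical-space probabilistic Strichartz estimates for $\psi_{M,L_M}$ (built from Lemma \ref{LEM:unitary}, Lemma \ref{LEM:efesti}, and Lemma \ref{LEM:TypeC}) to close the estimate. The term $\mathcal A_M^{(5)}$ is handled analogously to $\mathcal A_M^{(1)}$, and $\mathcal A_M^{(6)}$ (all low frequencies, output at $M$) is controlled by exploiting the projection $\mathbb{P}_M$: the correlation decay of Lemma \ref{LEM:ev5} forces a large $|n-n_{(1)}|$ and provides the missing smoothing.

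For part (ii), we use the explicit formula \eqref{HnNL}: $H_n^{N,L}-H_n^{N,L/2}$ equals an $O(1)$ phase times $-i(k+1)\int_0^t \langle (|u_L|^{2k}-|u_{L/2}|^{2k})e_n,e_n\rangle\,dt'$. This reduces to multilinear estimates on the difference $u_L-u_{L/2}=y_{L}$, which at the new scale $L\sim M/2$ splits as $\psi_{L,L_{L}}+z_{L}$ and is controlled by the induction together with the law equivalence $\mathrm{Law}(\psi_{L,L_L}(t))=\mathrm{Law}(\mathbb P_L u_0^\omega)$ from Lemma \ref{LEM:unitary}. For part (i), the case $L_{\max}<M/2$ is covered by $\textup{\textsf{Local}}(M/2)$, and for $L_{\max}\in[M/2,M)$ the same expansion $y_{L_j}=\psi_{L_j,L_{L_j}}+z_{L_j}$ and the same $L^p$ eigenfunction bounds together with conditional Wiener chaos close the estimate on the operator norm, with the $L_{\max}^{\theta_0}$ gain being supplied by counting and the small parameter $\kappa$ in \eqref{gamma}.

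The principal obstacle is that the absence of a Fourier convolution theorem on $\D$ precludes a direct transposition of the Fourier-side arguments of \cite{DNY2}. We navigate this by working in physical space wherever the RRO-driven law equivalence is available, which is exactly where the most delicate high$\to$high interactions sit; this keeps the output frequency $n$ free of any lossy sum and respects the uniform-in-$n$ nature of the $Y^{s,b}$ norm on the left-hand side. For the remaining non-RRO multilinear pieces we proceed on the Fourier side but rely on Lemma \ref{LEM:ev5}, whose off-diagonal decay replaces the missing convolution restriction by an effective hyperplane factor $n_{(2)}/|n-n_{(1)}|$ in the high-low regime. Balancing the loss from $c(\bar n)$ when $|n-n_{(1)}|\not\gg n_{(2)}$ against the probabilistic gains, together with the conservation of mass and energy for the truncated flows \eqref{NLS_N}, will be the source of the tightest inequalities; these are precisely where the hierarchy \eqref{constants}--\eqref{gamma} is exploited.
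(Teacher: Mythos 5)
Your overall framework — strong induction on dyadic frequency, decomposition into the six terms $\mathcal A_{2M}^{(J)}$, physical-space estimates to compensate for the failed Fourier convolution theorem, and off-diagonal decay of $c(\bar n)$ via Lemma~\ref{LEM:ev5} — is the right skeleton. However, several of the assignments of method to case are wrong or omit the decisive technical device, and at least two steps as written do not close.

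First, your proof of part~(ii) replaces the paper's resolvent identity
\[
\mathfrak h^{N,L}=\mathcal H^{N,\frac L2}\bigl(\mathcal P^{N,L}-\mathcal P^{N,\frac L2}\bigr)\mathcal H^{N,L}
\]
with a direct computation from \eqref{HnNL}. But $\mathfrak h^{N,L}$ is a multiplication operator in physical time, and to bound its $Y^{s,b}\to Y^{s,b}$ norm you must control how the temporal Fourier transform of $H_n^{N,L}-H_n^{N,\frac L2}$ interacts with the $\langle\tau\rangle^{2b}$ weight; the pointwise exponential formula alone does not give this, and your reduction ``to multilinear estimates on $y_L$'' skips the step that transfers time regularity. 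The resolvent identity avoids this entirely by reducing to the already-controlled operator $\mathcal P^{N,L_2,\dots,L_{2k+1}}$ and the Neumann series bound \eqref{HNLbd}.

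Second, your treatment of part~(i) through ``conditional Wiener chaos together with counting'' is precisely the Fourier-side route the paper flags as problematic: because $c(\bar n)$ is not supported on a hyperplane, a counting-based estimate risks incurring a loss in the output frequency $n$, which is not permitted by the uniform-in-$n$ $Y^{s,b}$ norm. The paper instead exploits the diagonal structure $n=n_1$, unitarity from Lemma~\ref{LEM:unitary}, and the eigenfunction $L^{4-}$ bound on $e_n^2$ to close the estimate \emph{entirely in physical space} via \eqref{RROcomp1}--\eqref{RROcomp2}, which is what makes the $L_{\max}^{-\theta_0}$ gain come out without any summation over $n$.

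Third, your handling of $J=3$ and $J=6$ is missing the meshing argument. In these cases one must apply Lemma~\ref{LEM:condchaos} after fixing the modulation variables, and the associated exceptional sets would otherwise depend on the continuum of $\tau_j$; discretizing the box $Q$ into $O(K_M^{2k+2})$ cells, averaging the multiplier $h_{n_j}$ and the kernel $\mathcal K$ over each cell via Poincar\'e's inequality \eqref{Poincare}, and applying the derivative kernel bounds in Lemma~\ref{PROP:duhamel} are what keep the probability of the exceptional set summable. Your proposal makes no mention of this and could not, as written, produce the stated $C_\theta e^{-(T^{-1}M)^\theta}$ bound. Relatedly, you claim $J=5$ is ``handled analogously to $\mathcal A_M^{(1)}$,'' but the paper explicitly shows this fails: $J=5$ has only one high frequency on the \emph{conjugated} slot, which forces $|\Phi(\bar n)|\gtrsim M^2$ under \eqref{J41}--\eqref{J42}, and the gain comes from a high-modulation constraint, not from a double smallness as in $J=1$. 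Likewise $J=1,4$ are closed by pure physical-space Strichartz estimates (no Wiener chaos or counting), while your description for $\mathcal A_M^{(2)}$ omits the crucial split into the $n=n_1$ piece (handled by part~(i)) and the $n\neq n_1$ piece (handled by the off-diagonal decay together with a Fourier-side Cauchy--Schwarz as in \eqref{J2A2cs}). Finally, conservation of mass and energy for \eqref{NLS_N} plays no role in these a priori estimates; it enters only in the globalization.
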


%\justin{Induction:
%Suppose that $\mathtt{Local}(M)$ is true and we want to prove $\mathtt{Local}(2M)$ is then true. 
%\begin{itemize}
%\item For $\hf^{N,L}$: if $M>N$, there is nothing to prove. Otherwise, if $M\leq N$, then we want $L<2M$ given that $L<M$. So we may assume that $L=M$. So we control only $\hf^{N,M}$ and removing exceptional sets depending on $L$ is depending on $M$ now.
%\item For $z_{N}$: we have $N\leq M$ and just want $N\leq 2M$, so only need to prove case $N=2M$; i.e. for $z_{2M}$.
%\item For $\mathcal{P}_{N,L_2,\ldots, L_{2k+1}}^+$: as in for $\hf^{N,L}$, we only need to prove when $L_{\max}=M$. So removing exceptional sets depending only on $L_{\max}$ is good.
%\item For $\QQ_{2^{-\l}N,L_2,\ldots,L_{2k+1}}$: we only need to prove the case when $L_{\max}=M\leq 2^{-21}N$.
%\end{itemize}
%Consider when we estimate $z_{2M}$ for the case when $n_1=n_{(1)}$ and $J=2,5$. We split these terms into three pieces: 
%\begin{itemize}
%\item $|n_{(2)}| \ges |n|$: exceptional sets depend on $M$ as then $N_{(2)}\sim N\sim M$.
%\item $|n_{(2)}| \ll |n|$ and $|n|\gg |n_1|$: here we gain from the off-diagonal decay and apply RTE with $M$.
%\item $|n_{(2)}| \ll |n|$ and $|n|\sim |n_1|$: we write this contribution of $z_{2M}$ as a sum of the operators $\QQ_{\l}$ and we have already obtained there boundedness property at this level and can apply it with input function $z_{2M}$.
%\end{itemize}
%}
%
%\medskip

\subsection{Proof of Theorem \ref{THM:main} assuming Proposition~\ref{mainprop} }

By Proposition \ref{mainprop}, the event 
\begin{align*}
\O_{T} : = \{ \o \in \O \,: \, \textsf{Local}(M)\,\, \text{is true for all} \,\, M\in 2^{\N}\}
\end{align*}
has complemental probability $\mathbb{P}(\O_{T}^{c}) \leq ce^{-T^{-\ta}}$.  As $\textsf{Local}(M)$ is true for all $M$, $\zeta^{N,L}$ from \eqref{zeta}, and $z_N$, the solution to \eqref{eqn:zn2}, are well-defined for each $N$ and thus so too is $y_N$ with the expansion
\begin{align}
y_N= \chi(t) \P_N u_0 +\sum_{L\leq L_{N}}\zeta^{N,L}  +z_N (t), \label{ydecomp}
\end{align}
where $u_0$ is given in \eqref{data}.

We show that the sequence
\begin{align*}
v_{N}(t) = \sum_{2\leq K\leq N} y_{K}(t)
\end{align*}
 converges in a suitable sense for each $\o\in \O_{T}$ as $N\to \infty$. Using \eqref{ydecomp}, we have
 \begin{align*}
 \sum_{2\leq K\leq N} y_{K}(t) & = \sum_{2\leq K\leq N} \chi(t) \P_{K} u_0 + \sum_{2\leq K\leq N}\sum_{L\leq L_{K}}\hf^{K,L}[\chi(\cdot)\P_{K}u_0] + \sum_{2\leq K\leq N} z_{K}(t).
\end{align*}
By direct computations using \eqref{data}, 
\begin{align}
u_{0}^{\o}= \lim_{N\to \infty}\sum_{2\leq K\leq N} \P_{K}u_0^{\o} \quad \text{in} \quad H^{\frac{1}{2}-\eps}(\mathbb{D}), 
\label{u0conv}
\end{align}
for any $\eps>0$.
In view of \eqref{mainz}, we see that 
\begin{align}
\sum_{N\geq 2} z_{N} \quad \text{converges in} \quad C(I;H^{1-2\g}_{x}(\mathbb{D})) 
\label{zconv}
\end{align}
almost surely.
%\noi
%Recall from \eqref{psiNL} and \eqref{zeta}, that 
%\begin{align*}
%\hf^{N,L}[\chi(\cdot)P_{K}u_0] = \sum_{n\in E_{K}\setminus E_{K/2}}h_{n}^{K,L}(\o) \frac{g_{n}(\o)}{\pi z_n}e_{n},
%\end{align*}
%where $h_{n}^{K,L}(\o)$ is independent from $g_{n}(\o)$, for $n\in E_{K}\setminus E_{K/2}$. 
Given small $0<\eps\ll \min(b-\frac{1}{2}, b_{-}-b)$, we have from Lemma~\ref{LEM:growth} that $T^{-1}K-$certainly,
\begin{align*}
\|\hf^{K,L}[\chi(\cdot)\P_{K}u_0]  \|_{Y^{s,b}(I)} & \les \|\hf^{K,L}\|_{Y^{s,b}(I)\to Y^{s,b}(J)}\|\P_{K}u_0\|_{H^s}  \les  T^{b_{-}-b-\eps} L^{-\ta_0} (\log L)^{4k} K^{-\frac 12 +s+\eps}.
\end{align*}
Thus, 
\begin{align*}
\bigg\| \sum_{2\leq K\leq N}\sum_{L\leq L_{K}}\hf^{K,L}[\chi(\cdot)\P_{K}u_0]  \bigg\|_{C(I; H^{s}_{x}(\mathbb{D}))} & \les \sum_{2\leq K\leq N}\sum_{L\leq L_{K}} \big\| \hf^{K,L}[\chi(\cdot)\P_{K}u_0] \big\|_{Y^{s,b}(I)} \\
& \les T^{b_{-}-b-\eps}  \sum_{2\leq K\leq N}\sum_{L\leq L_{K}} L^{-\ta_0} (\log L)^{4k} K^{-\frac 12+s+\eps},
\end{align*}
which is finite uniformly in $N$ provided that $s<\frac{1}{2}-\ta$. Moreover, by the linearity of the operators $\hf^{N,L}$ we have that 
\begin{align}
\sum_{N\geq 2} \sum_{1\leq L\leq L_{N}} \hf^{N,L}[\chi(\cdot)\P_{N}u_0] \quad \text{converges in} \quad C(I;H^{\frac{1}{2}-2\eps}_{x}(\mathbb{D})) \label{hconv}
\end{align}
almost surely.
Combining \eqref{u0conv}, \eqref{zconv}, and \eqref{hconv}, we see that the sequence $\{v_N-\P_{\le N} u_0\}$ converges in $C(I; H_x^{\frac{1}{2}-2\eps}(\mathbb{D}))$, and hence $\{v_N\}$ converges in $C(I;H^{\frac12 -2\eps}(\mathbb{D}))$, and so does the original sequence $\{u_N\}$.

\subsection{Reformulation}
\label{SUB:reform}
In the remainder of this paper, we are concerned with the proof of Proposition \ref{mainprop}.
The proof will proceed by strong induction on $M\in 2^{\N}$.
It follows immediately from \eqref{y2case} that the estimates involved in $\textsf{Local}(2)$ hold automatically.
Then we assume that $\textsf{Local}(M')$ for all $2\leq M'\leq M$ dyadic, and our goal is to show that $\textsf{Local}(2M)$ holds true with large probability; more precisely, the probability that $\textsf{Local}(2M)$ does not hold is less than $C_{\ta}e^{-(T^{-1}M)^{\ta}}$ for some $\ta,C_{\ta}>0$.

From now on we assume that $\textsf{Local}(M')$ is true for all $2\leq M'\leq M$. This means that for every $N\leq M$ and for all $L'<\min(M,N^{1-\kk})$, we have 
\begin{align*}
y_{L'}= \chi(t) \P_{L'}u_0 + \sum_{2\leq R<(L')^{1-\kk}}\hf^{L',R}[ \chi(\cdot)\P_{L'}u_0] + z_{L'}(t)
\end{align*}
on $I=[-T,T]$. 
Thus in order to prove the estimates (i), (ii), and (iv) in Proposition~\ref{mainprop} at the level $\textsf{Local}(2M)$, we may assume that the input function $w_{L'}$ is of one of the following three types:
%Recall from \eqref{decom1}, we have
%\begin{equation}
%\label{ansatz11}
%\begin{split}
% y_{L_i} (t) \, & = \, \psi_{{L_i}, \frac{L_i}2} (t) \, +\, z_{L_i} (t)\\
%& = \, \psi_{{L_i},\frac12} (t) 
%+ \sum_{1\le R_i \le  \frac{L_i}2 } \Big[ \psi_{{L_i},R_i} (t) - \psi_{{L_i},\frac{R_i}2} (t)  \Big] \, +\, z_{L_i} (t)\\
%& = P_{L_i} u_0 + \sum_{1\le R_i \le \frac{L_i}2} \zeta^{L,R_i} (t)  + z_{L_i} (t) \\
%& = P_{L_i} u_0 + \sum_{1\le R_i \le \frac{L_i}2} \mathfrak h^{{L_i},R_i}  (P_{L_i} u_0 ) + z_{L_i} (t).
%\end{split}
%\end{equation} 
%
%\noi 
%Therefore,
%in considering \eqref{eqn:zn2}
%and \eqref{linmap},
%we may replace $y_{L_i}$ by either 
%$ P_{L_i} u_0 $, or $\mathfrak h^{{L_i},R_i}  (P_{L_i} u_0) $, 
%or $z_{L_i} $.

\begin{itemize}
\item[(i)] \underline{\textbf{Type (G)}}, where
\begin{equation}
\label{TypeG}
\ft{w_L}(\tau, n)= \ind_{E_{L}\setminus E_{L/2}}(n) \frac{g_{n}(\omega)}{\pi \ld_{n}}\widehat{\chi_{T}}(\tau).
\end{equation}

\item[(ii)] \underline{\textbf{Type (C)}}, where
\begin{equation}
\label{TypeC}
\ft{w_L}(\tau, n)
=\ind_{E_{L}\setminus E_{L/2}}(n) \ft h_{n}^{L,R}(\tau,\omega) \frac{g_{n}(\omega)}{\pi \ld_{n}},
\end{equation} 
with $h_{n}^{L,R}(t ,\o)$ supported in the set $n \in E_{L}\setminus E_{L/2}$ and $t \in [-2T,2T]$, is $\mathcal{B}_{\leq R}$ measurable for some $R<L^{1-\kk}$, and satisfying the bounds 
\begin{equation}
\label{TypeCnorm}
\begin{split}
\sup_{n\in E_{L}\setminus E_{L/2}}\|\langle \tau\rangle^{b} \ft h_{n}^{L,R}(\tau,\o)\|_{L_{\tau}^2 } & \lesssim R^{-\dl_0},
\end{split}
\end{equation} 
\noi
for some $b>\frac 12$ and $0 < \dl_0 \ll 1$.
 Note that 
we have $h_{n}^{L,R} =\ind_{E_{L}\setminus E_{L/2}}(n)\cdot {\chi_T}(t)$
for $R=2$,
where Type (C) reduces to Type (G).

\item[(iii)] \underline{\textbf{Type (D)}}, where $\ft{w_L}(\tau, n)$ is supported in $E_{L}$, and satisfies
\begin{equation}
\label{TypeD}
\|\langle\tau \rangle^b \ft{w_L}(\tau, n)\|_{\ell_{n}^2 L_{\tau}^2}\lesssim L^{-1+\g}.
\end{equation} 
for some $b>\frac 12$, provided that $2\leq L \leq M$.

\end{itemize}

The remaining sections of this manuscript are organised as follows: we first control the random resonant operator in \eqref{linmap} by proving \eqref{PwZbb} in (i) of Proposition~\ref{mainprop}. We use these bounds to also prove (iii) in Proposition~\ref{mainprop}. This takes place in Section~\ref{SEC:RRO}. Then Section~\ref{SEC:LWP} completes (iv) in Proposition~\ref{mainprop} by proving \eqref{mainz} on the remainder terms.

\section{Physical space estimates and the random resonant operator}
\label{SEC:RRO}

\subsection{Strichartz estimates}

In this section, we observe some further physical space estimates that Type (C) and Type (D) inputs enjoy. In the following, we assume the setup of the previous section: that $\textsf{Local}(M')$ holds for all $1\leq M'\leq M$.

We begin with estimates for Type (D) terms. 
 We first recall the $L^4$-Strichartz estimate from \cite[Lemma 5.1]{Tz2}.
Given $N,K\in 2^{\N_0}$, we let $\Q_{K}$ be a smooth space-time projection onto modulations $\jb{\tau-\ld_n^2} \sim K$ given by the multiplier $\wt{\chi}(K^{-1} \jb{\tau-\ld_n^2})$.

%
%\begin{lemma}\label{LEM:bilin2d}
%Let $N_1,N_2>0$ and $K_1,K_2\geq 1$. For every $\eps>0$, there exists $b_0<\frac 12$, 
%such that
%\begin{align}
%\| \Q_{K_1}\P_{N_1} f_1 \cdot  \Q_{K_2} \P_{N_2}f_2\|_{L^{2}(\R\times B)} & \les (N_1 \wedge N_2)^{\eps} (K_1 K_2)^{b_0} \prod_{j=1}^{2} \|\Q_{K_j}\P_{N_j}f_j\|_{L^{2}_{t,x}}, \label{bilin3}
%%\| (\dd_{r}\Q_{K_1}\P_{N_1}f_1 )\Q_{K_2}\P_{N_2}f_2\|_{L^{2}(\R\times B)} & \les N_1  (N_1 \wedge N_2)^{s} (K_1 K_2)^{b_0} \prod_{j=1}^{2}  \|\Q_{K_j}\P_{N_j}f_j\|_{L^{2}_{t,x}}  \label{bilin4}
%\end{align}
%\end{lemma}
%Note: the interpolation is between $(N_1 \wedge N_2)^{\eps} (K_1 K_2)^{\frac 12}$ and $(N_1 \wedge N_2)(K_1 K_2)^{\frac 14}$ i.e. this is how $b_0$ is calculated depending on $\eps>0$.
%
%
%
%In particular, we specialising \eqref{bilin3} to the case when $f=g$, gives the following $L^4$-Strichartz estimate.

\begin{lemma}
\label{LEM:L4Strich}
Let $N,K\geq 1$ dyadic. Then, for any $\eps>0$, there exists $b_0<\frac 12$ such that 
\begin{align*}
\| \Q_{K}\P_{N}f\|_{L^{4}_{t,x}} \les N^{\eps} K^{b_0} \|\Q_{K}\P_{N}f\|_{L^{2}_{t,x}}.
\end{align*}
\end{lemma}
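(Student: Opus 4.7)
The plan is to obtain the desired estimate by interpolating between two bounds for $u=\Q_K\P_N f$: the $L^4$-Strichartz bound coming from the standard $X^{s,b}$-transfer principle (which yields the correct $N^\eps$ loss but only $b>\tfrac12$) and a trivial Bernstein-type $L^\infty$-bound (which yields $b=\tfrac14$ at the cost of a polynomial $N^{1/2}$ loss). This is a standard device for squeezing $b_0<\tfrac12$ out of an endpoint Strichartz estimate, and it is essentially the argument used in Tzvetkov \cite[Lemma 5.1]{Tz2}.

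The first ingredient is the $X^{s,b}$-transfer principle applied to the base $L^4$-Strichartz estimate~\eqref{L4strich} on $\D$:
\begin{align*}
\|\Q_K \P_N f\|_{L^4_{t,x}} \les N^\eps \|\Q_K \P_N f\|_{X^{0,\frac12+\eps}} \les N^\eps K^{\frac12+\eps}\,\|\Q_K \P_N f\|_{L^2_{t,x}}.
\end{align*}
The second ingredient is a Bernstein-type bound. The spatial Bernstein $\|\P_N g\|_{L^\infty(\D)} \les N \|g\|_{L^2(\D)}$ follows from $\|e_n\|_{L^\infty}\les n^{1/2}$ (Lemma~\ref{LEM:efesti}) combined with Cauchy-Schwarz over the $\sim N$ many indices $n \in E_N \setminus E_{N/2}$ (using $|E_N\setminus E_{N/2}|\sim N$ from \eqref{ev}). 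The time Bernstein $\|\Q_K h\|_{L^\infty_t} \les K^{1/2}\|h\|_{L^2_t}$ follows from Cauchy-Schwarz on the modulation support of length $\sim K$. Combining these (in the order: time first at each fixed frequency $n$, then Cauchy-Schwarz in $n$) and using $\|u\|_{L^4}^2 \le \|u\|_{L^2}\|u\|_{L^\infty}$ yields
\begin{align*}
\|\Q_K \P_N f\|_{L^4_{t,x}} \les N^{\frac12} K^{\frac14}\,\|\Q_K \P_N f\|_{L^2_{t,x}}.
\end{align*}

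Geometric interpolation between the two bounds with weight $\theta\in(0,1)$ on the first gives
\begin{align*}
\|\Q_K \P_N f\|_{L^4_{t,x}} \les N^{\eps\theta + \frac12(1-\theta)}\,K^{(\frac12+\eps)\theta + \frac14(1-\theta)}\,\|\Q_K \P_N f\|_{L^2_{t,x}}.
\end{align*}
Choosing $\theta = 1-\eps_1$ with $4\eps < \eps_1 \ll 1$ makes the $K$-exponent strictly less than $\tfrac12$, while the $N$-exponent is $O(\eps+\eps_1)$ and can be made arbitrarily small. The main point requiring care is the Bernstein step on $\D$: because the eigenfunctions are not uniformly bounded (cf.~\eqref{enLinfty}), the naive application of $\|e_n\|_{L^\infty}\les n^{1/2}$ loses $N^{1/2}$ beyond the flat case, but Cauchy-Schwarz in $n$ on the $\sim N$ active modes recovers exactly the clean $N^1$ loss. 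This loss is then absorbed in the interpolation and does not affect the final $N^\eps$ bound. The remainder of the argument is routine.
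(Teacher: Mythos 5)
Your proof is correct, and since the paper simply cites this lemma from Tzvetkov \cite[Lemma 5.1]{Tz2} without reproducing the argument, the interpolation you perform — between the $X^{s,b}$-transfer of the endpoint $L^4$-Strichartz estimate~\eqref{L4strich} (sharp $N^\eps$ loss but $b>\tfrac12$) and the Bernstein bound $\|\Q_K\P_N f\|_{L^\infty_{t,x}}\les N K^{1/2}\|\Q_K\P_N f\|_{L^2_{t,x}}$ (only $b=\tfrac14$ but with the extra $N^{1/2}$) — is indeed the standard and intended route. You also handle the only genuinely disc-specific point correctly: doing the temporal $\sup$ at each fixed eigenmode $n$ before Cauchy--Schwarz over the $\sim N$ active modes is exactly what absorbs the non-uniform eigenfunction growth~\eqref{enLinfty} into the harmless polynomial factor that the interpolation then crushes.
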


 We denote by $\P_{K}^{t}$ the smooth projection to temporal frequencies $|\tau | \sim K$ by the multiplier $\wt{\chi}(K^{-1}\tau)$.
Combining Lemma~\ref{LEM:L4Strich} with the Type (D) assumption \eqref{TypeD}, we have the following:

\begin{lemma}\label{LEM:Dstrich}
Let $w_{L}$ be of \textup{Type (D)}. Then, for any $4\leq p\leq \infty$, it holds that
\begin{align}
\| S(t) w_{L}\|_{L^{p}_{t,x}} &\les L^{\g - \frac{4}{p}+} \label{Dphys} \\ 
\| S(t)\P^{t}_{K}w_{L}\|_{L^{p}_{t,x}} &\les K^{-b} L^{\g - \frac{4}{p}+}. \label{Dphys2}
\end{align}
Here $S(t)$ is the linear semi-group defined in \eqref{St}.
\end{lemma}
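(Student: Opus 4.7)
\smallskip
\noindent
\textbf{Proof plan for Lemma~\ref{LEM:Dstrich}.} My strategy is to first promote the given $L^4$-Strichartz estimate to an $L^p$-Strichartz estimate on $\mathbb{D}$ for $p\in[4,\infty]$, and then combine this with a temporal Fourier decomposition plus Cauchy--Schwarz against the weight $\langle\tau\rangle^b$ built into the Type~(D) assumption.

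\smallskip
\noindent
First, I would establish the disc Strichartz estimate
\[
\|\eta(t) S(t)\mathbf P_L\phi\|_{L^p_{t,x}(\R\times\mathbb{D})} \lesssim L^{1-\tfrac{4}{p}+\eps}\,\|\phi\|_{L^2(\mathbb{D})}
\]
for all $p\in[4,\infty]$, where $\eta\in C_c^\infty(\R)$ is a standard time cutoff. At the endpoint $p=4$, this is exactly \eqref{L4strich} together with the Bernstein-type observation $\|\mathbf P_L\phi\|_{H^\eps}\lesssim L^\eps\|\phi\|_{L^2}$. At the endpoint $p=\infty$, one uses unitarity of $S(t)$ on $L^2$ followed by the eigenfunction estimate \eqref{enLinfty} and Cauchy--Schwarz over $n\in E_L$ (of which there are $\sim L$), yielding $\|\mathbf P_L\phi\|_{L^\infty_x}\lesssim L\,\|\phi\|_{L^2_x}$ for radial $\phi$. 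Complex interpolation in $p$ produces the stated range of exponents, with $L^{1-4/p+\eps}$ interpolating between $L^\eps$ and $L$.

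\smallskip
\noindent
Next, I would pass to a time-dependent input via the transfer principle. Writing $W_\tau(x)=\sum_n\widehat{w_L}(\tau,n)e_n(x)$, so that $\|W_\tau\|_{L^2_x}=\|\widehat{w_L}(\tau,\cdot)\|_{\ell^2_n}$, we have
\[
S(t)w_L(t,x) = \int_\R e^{it\tau}\,S(t)W_\tau(x)\,d\tau.
\]
Minkowski's inequality followed by the Strichartz estimate above gives
\[
\|S(t)w_L\|_{L^p_{t,x}} \;\leq\; \int_\R \|S(t)W_\tau\|_{L^p_{t,x}}\,d\tau \;\lesssim\; L^{1-\frac{4}{p}+\eps}\int_\R \|W_\tau\|_{L^2_x}\,d\tau.
\]
Since $b>\tfrac{1}{2}$, Cauchy--Schwarz in $\tau$ yields
\[
\int_\R \|W_\tau\|_{L^2_x}\,d\tau \;\leq\; \|\langle\tau\rangle^{-b}\|_{L^2_\tau}\,\|\langle\tau\rangle^b\widehat{w_L}\|_{\ell^2_n L^2_\tau} \;\lesssim\; L^{-1+\gamma},
\]
which proves \eqref{Dphys}. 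For the temporally localized statement \eqref{Dphys2}, the same decomposition applies with $\widehat{w_L}$ replaced by $\tilde\chi(K^{-1}\tau)\widehat{w_L}$; the Cauchy--Schwarz factor becomes $\|\tilde\chi(K^{-1}\cdot)\langle\tau\rangle^{-b}\|_{L^2_\tau}\lesssim K^{\frac{1}{2}-b}$ due to the restriction $|\tau|\sim K$ on the support of $\tilde\chi$, producing a decaying factor in $K$ of the appropriate order (in particular summable, so consistent with and refining \eqref{Dphys}).

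\smallskip
\noindent
The proof is largely routine; there is no serious analytic obstacle. The only non-trivial ingredient is the $L^p$-Strichartz estimate on the disc, which itself interpolates a known result with an elementary radial Sobolev/Bernstein bound, and the remaining steps amount to Minkowski and a single application of Cauchy--Schwarz against the $\langle\tau\rangle^b$ weight. The mild loss from the eigenfunction growth \eqref{enLinfty} is absorbed into the $L^{1-4/p}$ factor, which is precisely the Bernstein-type exponent one expects in two dimensions.
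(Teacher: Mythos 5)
Your argument is correct and is essentially the paper's proof with the steps reordered. Both approaches interpolate between the $L^4$-Strichartz estimate and the $L^\infty$ Bernstein bound $\|\P_{\le L}f\|_{L^\infty_x}\lesssim L\|f\|_{L^2_x}$ that follows from \eqref{enLinfty} and Cauchy--Schwarz over $n\in E_L$; the paper interpolates directly on $S(t)w_L$ after establishing $\|S(t)w_L\|_{L^4_{t,x}}\lesssim L^\eps\|w_L\|_{Y^{0,b}}$ and $\|S(t)w_L\|_{L^\infty_{t,x}}\lesssim L\|w_L\|_{Y^{0,b}}$ from Lemma~\ref{LEM:L4Strich} and Sobolev embedding, whereas you first interpolate at the fixed-profile level and then pass to $Y^{0,b}$ data via the superposition $S(t)w_L=\int e^{it\tau}S(t)W_\tau\,d\tau$, Minkowski, and Cauchy--Schwarz against $\langle\tau\rangle^b$. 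These are equivalent reorderings of the same ingredients.

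One remark on \eqref{Dphys2}: your Cauchy--Schwarz restricted to $|\tau|\sim K$ yields $K^{1/2-b}$ rather than the stated $K^{-b}$, and you flag this. This is not a deficiency of your approach: the paper's own ``similarly'' derivation, via $S(t)\P_K^t w_L=\Q_K S(t)w_L$ and Lemma~\ref{LEM:L4Strich} (which only furnishes $K^{b_0}$ for some $b_0<\tfrac12$), likewise gives no better than $K^{b_0-b}$ at $p=4$ and $K^{1/2-b}$ at $p=\infty$. The exponent $K^{-b}$ in \eqref{Dphys2} appears to be a harmless overstatement; what is actually used in Section~\ref{SEC:LWP} is only some uniform, summable decay in $K$, which both proofs deliver.
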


\begin{proof}
Recall that $w$ is supported on the spatial frequencies in $E_{L}$.
 Then, it holds that $S(t)\P_{K}^{t}w= \Q_{K}S(t) w$.
Thus, by Lemma~\ref{LEM:L4Strich} and Sobolev embedding, we have
\begin{align*}
\| S(t)w_{L}\|_{L^{4}_{t,x}} &\les L^{\eps} \|w_{L}\|_{Y^{0,b}} \les L^{-1+\g+\eps}, \\
\| S(t)w_{L}\|_{L^{\infty}_{t,x}}& \les L \|w_{L}\|_{Y^{0,b}} \les L^{\g}.
\end{align*}
Then, \eqref{Dphys} follows by interpolating the above bounds and similarly for \eqref{Dphys2}.
\end{proof}

We move onto bounds for the Type (C) terms.
\begin{lemma}
    \label{LEM:TypeC}
    Let $w_j$ be of \textup{Type (C)} defined in \eqref{TypeC} and \eqref{TypeCnorm}. 
    Let $0 < \eps_0 \ll 1$.
    We also assume that $w_j(t)$ is supported temporally in $[-T, T]$. Then, given $\theta_0 > 0$, there exists $C_0$ such that there exists a $\mathcal B_{\le 2N_j}$-measurable set $\Xi$ such that $\mathbb P(\O \setminus \Xi) < C_0 e^{-  R^{2\theta_0}}$, such that on $\Xi$, we have
    \begin{align}\label{C_estimate1}
   \sup_{K\in 2^{\N}} K^{-\eps}\|S(t)\P^{t}_{K}w_j\|_{L^{q}_{t}L^{r}_{x}}+ \| S(t)w_j \|_{L^q_t L^r_x} &\le C_1 T^{\frac1q} R^{\theta_0} D(r, N_j), \\
  \sup_{K\in 2^{\N}} K^{(1-\eps_0) b - \eps} \| \P^t_{K} w_{j}\|_{L^{2}_{t,x}} 
  & \le C_1 T^{\eps_0} R^{\theta_0} N^{- \frac12}_j L_j^{- (1-\eps_0) \dl_0} . \label{C_estimate2}
    \end{align}
    where $b > \frac12$ is as in \eqref{TypeCnorm}, $C_0$ and $C_1 > 0$ depend only on $q ,r\geq  2$ and 
    \begin{align*}
D(r, N_j):=\Big( \ind_{\{ 2\leq r<4\}} N_j^{-\frac 12 }  +\ind_{\{ r=4\}} N_j^{-\frac 12+}+  \ind_{\{4< r\leq \infty\}} N_j^{-\frac 2r}\Big).
\end{align*}
Moreover, it holds that 
    \begin{align}
   \sup_{K\in 2^{\N}} K^{-\eps}\|S(t)\P^{t}_{K} \nb w_j\|_{L^{q}_{t}L^{r}_{x}}+ \| S(t) \nb w_j \|_{L^q_t L^r_x} &\le C_1 T^{\frac1q} R^{\theta_0} N_j D(r, N_j),\label{C_estimate3} \\
      \sup_{K\in 2^{\N}} K^{-\eps}\|S(t)\P^{t}_{K} \Dl w_j\|_{L^{q}_{t}L^{r}_{x}}+ \| S(t) \Dl w_j \|_{L^q_t L^r_x} &\le C_1 T^{\frac1q} R^{\theta_0} N_j^2 D(r, N_j).\label{C_estimate4}
    \end{align}
\end{lemma}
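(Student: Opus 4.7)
\medskip

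\noindent\textbf{Proof plan for Lemma~\ref{LEM:TypeC}.}

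The key structural point is that a Type (C) function factorises:
\[
w_j(t,x) \;=\; \sum_{n \in E_{L} \setminus E_{L/2}} h_n^{L,R}(t,\o) \, \frac{g_n(\o)}{\pi \ld_n} \, e_n(x),
\]
where, by $R<L^{1-\kk}$, the coefficients $\{h_n^{L,R}\}_n$ are $\mathcal B_{\le R}$-measurable and hence \emph{independent} of $\{g_n : n \in E_L\setminus E_{L/2}\}$. The plan is to condition on $\mathcal B_{\le R}$ (freezing $h$), apply Gaussian hypercontractivity (Wiener chaos) to the resulting degree-one Gaussian polynomial, and then Chebyshev. The set $\Xi$ on which the estimates hold is then $\mathcal B_{\le 2 N_j}$-measurable because all the randomness involved sits inside $\mathcal B_{\le L} \subset \mathcal B_{\le 2 N_j}$.

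First I would handle \eqref{C_estimate1}. For $q$ larger than $r$ and chosen later $\sim R^{2\ta_0}$, Minkowski's integral inequality gives $\|S(t) w_j\|_{L^q_\o L^q_t L^r_x} \le \|S(t)w_j\|_{L^q_t L^r_x L^q_\o}$, and Gaussian hypercontractivity for the degree-one chaos in $g_n$ gives $\|S(t)w_j(t,x)\|_{L^q_\o} \lesssim q^{\frac12}\,\|S(t)w_j(t,x)\|_{L^2_\o}$. The conditional variance is
\[
\mathbb E\bigl[|S(t)w_j(t,x)|^2 \,\big|\, \mathcal B_{\le R}\bigr] \;=\; \sum_{n\in E_L\setminus E_{L/2}} \frac{|h_n^{L,R}(t)|^2 \, e_n(x)^2}{\pi^2 \ld_n^2},
\]
and $|h_n^{L,R}(t)| \le \|\jb\tau^{-b}\|_{L^2_\tau}\|\jb\tau^b \ft h_n^{L,R}\|_{L^2_\tau} \lesssim R^{-\dl_0}$ by \eqref{TypeCnorm} and Cauchy--Schwarz. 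Taking the $L^r_x$ norm, Minkowski (since $r\ge 2$) reduces us to $\bigl(\sum_{n\sim N_j} \ld_n^{-2} \|e_n\|_{L^r}^2\bigr)^{\frac12}$, which Lemma~\ref{LEM:efesti} evaluates to $D(r,N_j)$ (splitting into $2\le r<4$, $r=4$, $r>4$). Finally the temporal support $[-T,T]$ yields the $T^{\frac1q}$ factor. Chebyshev with $q \sim R^{2\theta_0}$ and a constant $C_1 \gg 1$ then gives the $R^{\theta_0}$-certain bound. The modulation-localised variant requires only the replacement $h_n^{L,R} \rightsquigarrow \P^t_K h_n^{L,R}$ and the bound $\|\P^t_K h_n^{L,R}\|_{L^\infty_t} \lesssim K^{-(b-\frac12)} R^{-\dl_0}$, which is stronger than the stated $K^{\eps}$ loss and gives summability in $K$ over dyadics.

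For the $L^2_{t,x}$ estimate \eqref{C_estimate2}, I would use Plancherel directly: conditionally on $\mathcal B_{\le R}$,
\[
\mathbb E\bigl[\|\P^t_K w_j\|_{L^2_{t,x}}^2 \,\big|\, \mathcal B_{\le R}\bigr] \;\lesssim\; \sum_{n\in E_L\setminus E_{L/2}} \frac{\|\wt\chi(K^{-1}\cdot) \ft h_n^{L,R}\|_{L^2_\tau}^2}{\ld_n^2}.
\]
Interpolating the $K^{-b}$ decay (from $\jb\tau^b$ in \eqref{TypeCnorm}) with the raw $L^2_\tau$ bound $\lesssim T^{\eps_0}$ (using that $h_n^{L,R}(t)$ is supported in $[-2T,2T]$ and has $L^\infty_t$ bound $R^{-\dl_0}$) produces the hybrid exponents $K^{-(1-\eps_0)b}$ and $R^{-(1-\eps_0)\dl_0}$, and summing over $n\sim N_j$ yields $N_j^{-\frac12}$. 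Hypercontractivity and Chebyshev then give the claim with an $R^{\theta_0}$ loss. The derivative bounds \eqref{C_estimate3}--\eqref{C_estimate4} follow the same scheme, replacing $e_n$ in the variance by $\nb e_n$ or $\Dl e_n = -\ld_n^2 e_n$; the first uses $\|\nb e_n\|_{L^r} \lesssim n \|e_n\|_{L^r}$ from Lemma~\ref{LEM:efesti}, and the second uses $\ld_n \sim N_j$ for $n \sim N_j$, producing the extra $N_j$ and $N_j^2$ factors.

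The main technical obstacle is the case $r = \infty$, where Minkowski in $L^r_x$ fails. I would circumvent this by bounding instead in $L^p_x$ for some large finite $p$ and using Sobolev embedding $W^{\eps,p} \hookrightarrow L^\infty$, which costs only $N_j^\eps$ and is easily absorbed into $R^{\theta_0}$. A secondary point is converting the per-$K$ tail bounds into the stated sup over $K$: this follows by dyadic summation, where the polynomial-in-$K$ gain (genuine $K^{-(b-\frac12)}$ versus the stated $K^{\eps}$ loss) provides the needed summability at the cost of an arbitrarily small power.
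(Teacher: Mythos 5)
Your plan is correct in its broad strokes and follows the same conditional--Wiener-chaos + hypercontractivity + Nelson (Chebyshev) strategy as the paper: condition on $\mathcal B_{\le R}$, compute the conditional variance, estimate it via the eigenfunction bounds of Lemma~\ref{LEM:efesti}, apply Minkowski and hypercontractivity, and Chebyshev with $p \sim R^{2\theta_0}$. The main deviation is where you control the size of $h_n^{L,R}(t)$. You use the abstract $Y^b$-type bound \eqref{TypeCnorm} and the $L^1_\tau$-Cauchy--Schwarz estimate $|h_n^{L,R}(t)| \lesssim \|\jb{\tau}^{-b}\|_{L^2_\tau}\|\jb{\tau}^b\ft h_n^{L,R}\|_{L^2_\tau} \lesssim R^{-\delta_0}$. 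The paper instead exploits the explicit unitarity structure from Lemma~\ref{LEM:unitary}, namely $|H_n^{N,L}(t)| = 1$ and hence $|h_n^{L,R}(t)| \lesssim |\chi_T(t)|$, which makes the Type (C) conditional variance literally coincide with the Type (G) one, independently of the implicit constant in \eqref{TypeCnorm}. Your route is more self-contained (it never invokes Lemma~\ref{LEM:unitary}) and formally yields a stronger $R$-factor, but it inherits whatever $T$-dependence is hidden in the $\lesssim$ of \eqref{TypeCnorm}; since Remark~\ref{RMK:4.4} declares such $T$-factors harmless, this is not an obstruction, but it is the reason the paper prefers the unitarity argument.

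Two small points need attention. First, your estimate on the modulation-localised piece, $\|\P^t_K h_n^{L,R}\|_{L^\infty_t}\lesssim K^{-(b-\frac12)}R^{-\delta_0}$, produces the $K$-decay but does \emph{not} directly recover the $T^{1/q}$ factor: $\P^t_K$ destroys the compact support in $t$, so you cannot simply integrate over $[-2T,2T]$. The paper obtains $T^{1/q}$ uniformly in $K$ by Young's inequality applied to the convolution representation $\P^t_K h_n = G_K * h_n$ together with the pointwise bound on $h_n$, and then recovers the $\sup_K$ via the $K^{-\eps}$ factor and $\ell^\infty\subset\ell^p$. You should either proceed similarly (Young plus your $L^\infty$ bound on $h_n$, giving a $K$-uniform $T^{1/q}$ estimate) or take a geometric mean of the $T^{1/q}$ bound with your $K^{-(b-\frac12)}$ bound; either patch is routine. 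Second, you correctly identify the $r=\infty$ issue for Minkowski; Sobolev embedding with a tiny derivative loss is a perfectly acceptable fix, and in practice the exponents used downstream are finite. With these adjustments your proof goes through.
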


\begin{remark}\rm
\label{RMK:4.4}
The purpose of the bound for the first quantity in \eqref{C_estimate1} is to control the dyadic summations over modulation variables for Type (C) quantities. See Section~\ref{SEC:LWP}.
In the following, we may take $R = T^{-1} N_j$ and $\theta_0 = \theta \ll \eps_0$.
We also remark that in \eqref{C_estimate1}–\eqref{C_estimate4}, there appear factors with positive powers of $T$, such as $T^{\eps_0}$ and $T^{\frac1q}$. Such factors are sufficient to absorb any $T^{-\theta}$ terms when taking $R = T^{-1} N_j$.
Therefore, in what follows, we may omit these factors of $T$, as they are under control in any case.
\end{remark}

\begin{proof}
    We first consider when $w_j$ is of Type (G), as defined in \eqref{TypeG}. We note that the second bound in \eqref{C_estimate1} also holds without the linear propagator $S(t)$ appearing. Indeed, we will give the proof in this case.
    Now 
    \[
    w_j (t,x) = \chi_T (t) \sum_{\ld_{n} \sim N_j} \frac{g_{n} (\o)}{\ld_{n}} e_{n} (x). 
    \]
    It is easy to see that
    \begin{align}
    \label{L2G}
    \begin{split}
    \E \big[ |w_j (t,x)|^2 \big] & = |\chi_T (t) |^2 \sum_{\ld_{n} \sim N_j} \frac{e_{n}^2 (x) }{\ld_{n}^2} .
    \end{split}
    \end{align}
    Then, by using Minkowski's inequality and \eqref{L2G}, 
    we have
    \begin{align}
    \label{LpG}
    \begin{split}
    \big\| \|w_j\|_{L^q_t L_{x}^r} \big\|_{L^p_\o} 
     \le \big\| \|w_j\|_{L^p_\o} \big\|_{L^q_t L_{x}^r} &\le C \sqrt p \|w_j\|_{L^q_t L_{x}^r L^2_\o} \\
    & = C \sqrt p T^{\frac1q} \bigg\|   \sum_{n\sim N_j} \frac{e_n ^2(x)}{\ld_n^2}   \bigg\|_{L^{\frac{r}2}(\mathbb{D})}^{\frac{1}2} \\
    & \leq C\sqrt{p} T^{\frac 1q} \bigg( \sum_{n\sim N_j} \frac{1}{\ld_{n}^{2}} \| e_{n}\|_{L^{r}_{x}}^{2} \bigg)^{\frac 12} 
    \end{split} 
    \end{align}
    provided $p \ge \max(q, r)$.
    Then, \eqref{C_estimate1} follows from \eqref{efesti1}, $\ld_{n}\sim n$ and  a Nelson type argument. See \cite[Lemma 4.5]{TzBO}.
    Now we turn to the case where $w_j$ is of Type (C). Recall that the multiplier $h^{L,R}_{n}$ is independent of the Borel $\s$-algebra generated by Gaussians $\mathcal B_{\le L_j}$. Therefore, for fixed $(t, x)$ and using Lemma~\ref{LEM:unitary}, we get
    \begin{align}
    \label{L2C}
    \begin{split}
    \E \big[ |w_j (t,x)|^2 | \mathcal B_{\le L_j}\big]  =
    & | \chi_T (t)|^{2}  \sum_{\ld_{n} \sim N_j} \frac{e_{n}(x)^{2} }{\ld_{n}^2} .
    \end{split}
    \end{align}
    Then, by a similar argument as in \eqref{LpC} with the conditional Wiener chaos estimate (Lemma~\ref{LEM:condchaos}), we arrive at
    \[
    \big\| \|w_j\|_{L^q_t L_{x}^r} \big\|_{L^p_\o | \mathcal B_{\le L_j}} \le C \sqrt p T^{\frac1q}   D(r, N_j).
    \]
    Thus, we have 
    \begin{align}
        \label{LpC}
        \big\| \|w_j\|_{L^q_t L_{x}^r} \big\|_{L^p_\o} \le \Big\| \big\| \|w_j\|_{L^q_t L_{x}^r} \big\|_{L^p_\o | \mathcal B_{\le L_j}} \Big\|_{L^p_\o} \le C \sqrt p T^{\frac1q}  D(r, N_j),
    \end{align}
    which again suffices to \eqref{C_estimate1} with a Nelson type argument.
    
    Consider now the estimates for $S(t) \P_{K}^{t} w_{j}$. We have 
    \begin{align*}
S(t) \P_{K}^{t} w_{j} = \sum_{\ld_{n}\sim N_j} e^{-it \ld_{n}^{2}} \P_{K}^{t}[h_{n}^{N_j,L_j}](t) \frac{g_{n}}{\ld_{n}}e_{n}.
\end{align*}
Then, as in \eqref{L2C}, for fixed $(t,x)$ we have
\begin{align*}
\E[ |S(t) \P_{K}^{t} w_{j}(t,x)|^2 \vert \mathcal{B}_{\leq L_{j}}]& = \sum_{\ld_n \sim N_j} | \P_{K}^{t}[h_{n}^{N_j,L_j}](t)|^2 \frac{e_{n}(x)^2}{\ld_{n}^{2}}.
\end{align*}
Then, by the conditional Wiener chaos estimate, Minkowski's inequality ($q,r\geq 2$), we have
\begin{align*}
  \big\| \| S(t) \P_{K}^{t}w_j\|_{L^q_t L_{x}^r} \big\|_{L^p_\o} &\le \Big\| \big\| \|S(t)\P_{K}^{t}w_j\|_{L^q_t L_{x}^r} \big\|_{L^p_\o | \mathcal B_{\le L_j}} \Big\|_{L^p_\o}  \\
  & \les \sqrt{p}\Big\| \big\| \|S(t)\P_{K}^{t}w_j\|_{L^q_t L_{x}^r} \big\|_{L^2_\o | \mathcal B_{\le L_j}} \Big\|_{L^p_\o} \\
&   \les  \sqrt{p}  D(r,N_j) \Big\|    \sup_{n\in E_{N}\setminus E_{N/2}}\big\|  \P_{K}^{t}[h_{n}^{N_j,L_j}]  \big\|_{L^{q}_{t}}  \Big\|_{L^{p}_{\o}}.
\end{align*}

We write 
\begin{align*}
\P_{K}^{t}[ h_{n}^{N_j,L_j}](t) =\int_{\R} \chi_{T}(t')H_{n}^{N_j,L_{j}}(t') G_{K}(t-t')dt',
\end{align*}
where $G_{1}(t) := \int_{\R} e^{it \tau} \chi(\tau)d\tau$ and $G_{K}(t):= K G_{1}(Kt)$. By Lemma~\ref{LEM:unitary}, 
\begin{align*}
|\P_{K}^{t}[ h_{n}^{N_j,L_j}](t) | \leq \int_{\R} \chi_{T}(t') |G_{K}(t-t')|dt'.
\end{align*}
Therefore, 
\begin{align*}
 \Big\|    \sup_{n\in E_{N}\setminus E_{N/2}}\big\|  \P_{K}^{t}[h_{n}^{N_j,L_j}]  \big\|_{L^{q}_{t}}   \Big\|_{L^{p}_{\o}} \leq  \| |\chi_{T}| \ast |q_{K}| \|_{L^{q}_{t}} \leq \| \chi_{T}\|_{L^{q}_{t}} \| G_{K}\|_{L^{1}_{t}} \les T^{\frac{1}{q}},
\end{align*}
uniformly in $K\in 2^{\N}$.  This establishes that 
\begin{align*}
 \sup_{K\in 2^{\N}}\big\| \| S(t) \P_{K}^{t}w_j\|_{L^q_t L_{x}^r} \big\|_{L^p_\o}  \les \sqrt{p} T^{\frac 1q} D(r,N_j).
\end{align*}
In order to obtain this bound with the supremum over $K$ inside the $L^p (\O)$ norm, we use an additional factor $K^{\eps}$ as in \eqref{C_estimate1} and the embedding property $\l^{\infty}(\N) \subset \l^{p}(\N)$.
%Now we argue as in \eqref{LpC} with the only difference being to use Young's inequality which implies 
%\begin{align*}
%\| |\chi_{\dl}| \ast |q_{K}| \|_{L^{q}_{t}} \leq \| \chi_{\dl}\|_{L^{q}_{t}} \| q_{K}\|_{L^{q}_{t}} \les \dl^{\frac{1}{q}},
%\end{align*}
%    uniformly in $K$.
    
We move onto \eqref{C_estimate2}, for which we only consider the case where $w_j$ is of Type (C). By \eqref{TypeC}, \eqref{TypeCnorm}, we have
 \begin{align}
 \begin{split}
\E \big[ \| \P_{K}^t w_j\|_{L^{2}_{t,x}}^2 \big] & = \sum_{n \in E_{N_j} \backslash E_{N_j/2}} {\ld_{n}^{-2}} \E \big[ \|\P_{K}^{t}[h_{n}^{N_j,L_j}] \|_{L^2_t}^2 \big] \\
& \les  N_j^{-1} \sup_{\o \in \O} \sup_{n \in E_{N_j} \backslash E_{N_j/2}} \Big\| \ind_{\jb{\tau}\sim K} \ft{h_{n}^{N_j, L_j}}(\tau, \o) \Big\|_{L^{2}_{\tau} }^2 \\
& \les  N_j^{- 1} L_{j}^{- 2\dl_0}K^{- 2b}.
\end{split}
\label{C22}
\end{align}
On the other hand, we have the crude estimate
 \begin{align}
 \begin{split}
\E \big[ \| \P_{K}^t w_j\|_{L^{2}_{t,x}}^2 \big] 
& = \sum_{n \in E_{N_j} \backslash E_{N_j/2}} {\ld_{n}^{-2}} \| h_{n}^{N_j,L_j} \|_{L^2_t}^2   \les T N_j^{-1},
\end{split}
\label{C23}
\end{align}
where we used the fact that $h_{n}^{N_j,L_j}$ is compactly supported on $[-2T,2T]$.
By interpolating \eqref{C22} and \eqref{C23},
we arrive at 
\[
\E \big[ \| \P_{K}^t w_j\|_{L^{2}_{t,x}}^2 \big] \les T^{\eps_0} N^{-1}_j L_j^{- 2(1-\eps) \dl_0} K^{- 2(1-\eps) b}.
\]
This completes the proof of \eqref{C_estimate2} with a Nelson type argument.

For the bound \eqref{C_estimate3}, we use the same ideas as above but instead of using the $L^r$-bounds for the eigenfunctions $\{e_n\}$ in \eqref{LpG}, we use the $L^r$ bounds for the functions $\{e'_n\}$, which are also given in \eqref{efesti1}. For \eqref{C_estimate4}, the proof is the same as in \eqref{C_estimate1} since $-\Dl e_n = \ld_n^2 e_n$.
This completes the proof. 
\end{proof}

\subsection{RRO estimate and the proof of Proposition \ref{mainprop} - (i)}
\label{SUB:RRO}
In this section, 
we shall study the random resonant operator $\Pp_{N,L_2,\ldots,L_{2k+1}}$ defined in \eqref{linmap}
and prove \eqref{PwZbb}. 
By our induction hypothesis, it suffices to prove \eqref{PwZbb} for the case when $L_{\max}=M$. 
To simplify the notation, we write $\Pp_{N,L_{\max}}$ in place of $\Pp_{N,L_2,\ldots,L_{2k+1}}$.

Given $f\in Y^{s,b}(J)$ for $I = [-T,T]$, let $f^{\dag}\in Y^{s,b}$ be any extension of $f$.
First, notice that $\Pp_{N,L_{\max}}[f](t=0)=0$. 
Therefore, by Lemma~\ref{PROP:short}, we have
\begin{align}
\| \Pp_{N,L_{\max}}[f]\|_{Y^{s,b}(I)} \leq \| \chi_{T} \Pp_{N,L_{\max}}[f^{\dag}]\|_{Y^{s,b}}  \les  T^{b_{-}-b} \| \Pp_{N,L_{\max}}[f^{\dag}]\|_{Y^{s,b_{-}}},
\end{align}
thus gaining the small factor of $T$ required from \eqref{PwZbb}. 
Recalling \eqref{linmap} and using Lemma~\ref{LEM:duhamel}, we further obtain
\begin{align*}
 \| \Pp_{N,L_{\max}}[f^{\dag}]\|_{Y^{s,b_{-}}} &\les  \bigg\| \sum_{n\in E_N}e_n  \jb{\M( \{f^{\dag}\}_{n}, y_{L_2},\ldots, y_{L_{2k+1}}),e_n}  \bigg \|_{Y^{s,b_{-}-1}} \\
 & \sim  \bigg\| \sum_{n\in E_N}e_n  \jb{\M( \{f^{\dag}\}_{n}, w_{L_2},\ldots, w_{L_{2k+1}}),e_n}  \bigg \|_{Y^{s,b_{-}-1}}  
 % \label{RROcomp1}
\end{align*}
where $w_{L_j}=y_{L_j}$.
Here we may further decompose each $w_{L_j}$ to be of either Type (C) \eqref{TypeC} or Type (D) \eqref{TypeD}. Moreover, since $n=n_1$ in the frequency support of the term in \eqref{RROcomp1}, we may assume that $s=0$ by associating the $s$-derivative weight with $f$.

By duality, \eqref{psiNL}, \eqref{HnNL}, Cauchy-Schwarz and H\"{o}lder's inequality (for $\dl>0$ sufficiently small), 
\begin{align}
\bigg\|& \sum_{n\in E_N}e_n \jb{ \M( \{f^{\dag}\}_{n}, w_{L_2},\ldots, w_{L_{2k+1}}) ,e_n} \bigg \|_{Y^{0,b_{-}-1}}  \notag \\
 &= \sup_{\|h\|_{Y^{0,1-b_{-}}}\leq 1} \bigg| \int_{\R} \int_{\D}   \cj{h}\sum_{n\in E_N}e_n \jb{ \M( \{f^{\dag}\}_{n}, w_{L_2},\ldots, w_{L_{2k+1}}) ,e_n}     dx dt  \bigg| \notag \\
 & \sim \sup_{\|h\|_{Y^{0,1-b_{-}}}\leq 1} \bigg| \int_{\R} \sum_{n\in E_{N}} \cj{\ft h(t,n)}\ft{f^{\dag}}(t,n) \bigg( \int_{\D}e_{n}(y)^2 \prod_{j=2}^{2k+1} (S(t)w_{L_j})^{\iota_j}(t,y)dy \bigg)  dt  \bigg| \notag \\
 & \les   \sup_{\|h\|_{Y^{0,1-b_{-}}}\leq 1}  \int_{\R} \|\ft h(t,n)\|_{\l^2_n}  \| \ft f(t,n)\|_{\l^{2}_{n}}
 \sup_{n\in E_{N}} \bigg| \int_{\D}e_{n}(y)^2 \prod_{j=2}^{2k+1} (S(t)w_{L_j})^{\iota_j}(t,y)dy \bigg| dt  \notag\\
 & \les \sup_{\|h\|_{Y^{0,1-b_{-}}}\leq 1} \| h\|_{L^{4}_{t}L^{2}_{x}} 
 \|f\|_{L_t^{ 4\frac{2+\dl}{2+3\dl}}L^{2}_{x}} \bigg\| \sup_{n\in E_{N}} \bigg| \int_{\D}e_{n}(y)^2 \prod_{j=2}^{2k+1} (S(t)w_{L_j})^{\iota_j}(t,y)dy \bigg| \bigg\|_{L^{2+\dl}_{t}}. \label{RROcomp1}
\end{align} 
By Sobolev embedding in time, it remains to control the last factor in \eqref{RROcomp1}.
By H\"{o}lder's inequality and \eqref{efesti1}, we have
\begin{align}
\bigg\| & \sup_{n\in E_{N}} \bigg| \int_{\D}e_{n}(y)^2 \prod_{j=2}^{2k+1} (S(t)w_{L_j})^{\iota_j}(t,y)dy \bigg| \bigg\|_{L^{2+\dl}_{t}} \notag \\
&\leq \bigg\| \sup_{n\in E_{N}}  \| e_{n}\|^2 _{L_x^{4-\frac{6\dl}{1+2\dl}}} \bigg\| \prod_{j=2}^{2k+1} (S(t)w_{L_j}) \bigg\|_{L^{2+\dl}_{x}} \bigg\|_{L^{2+\dl}_{t}} \notag \\
& \les  \bigg\|  \prod_{j=1}^{2k} (S(t)w_{L_{(j)}})   \bigg\|_{L^{2+\dl}_{t,x}} \notag \\
& \les \|S(t) w_{L_{(1)}}\|_{L^{4}_{t,x}} \prod_{j=2}^{2k} \| S(t) w_{L_{(j)}}\|_{L^{2(2+\dl)(2k-1)}_{t,x}} \label{RROcomp2}
\end{align}
It follows from \eqref{Dphys} and \eqref{C_estimate1} that 
\begin{align*}
\|S(t) w_{L_{(1)}}\|_{L^{4}_{t,x}} \les 
\begin{cases}
L_{(1)}^{-1+\g+} \quad & \text{if of Type (D)},\\
L_{(1)}^{-\frac 12+}\quad & \text{if of Type (C)},
\end{cases}
\end{align*}
and 
\begin{align*}
\|S(t) w_{L_{(j)}}\|_{L^{4(2k-1)+O(\dl)}_{t,x}} \les 
\begin{cases}
L_{(j)}^{\g-\frac{1}{2k-1}+O(\dl)} \quad & \text{if of Type (D)},\\
L_{(j)}^{-\frac{1}{2(2k-1)}+O(\dl)}\quad & \text{if of Type (C)}.
\end{cases}
\end{align*}
Thus, we find that in over all possible choices $\{w_{L_j}\}_{j=1}^{2k+1}$, we have the bound
\begin{align*}
\eqref{RROcomp2} \les L_{(1)}^{-\frac 12+} \ll L_{\max}^{-\ta_0}.
\end{align*} 
This completes the proof of \eqref{PwZbb}.

\subsection{Proof of Proposition \ref{mainprop} - (ii)}
\label{SUB:main2}

In Subsection \ref{SUB:RRO},
we have verified Proposition~\ref{mainprop} - (i), i.e. \eqref{PwZbb}. 
We now verify that (ii) of Proposition~\ref{mainprop} holds; namely, 
we show \eqref{hNLbound}.  
We have
\begin{align}
\begin{split}
\Pp^{N,L}[f] &= \Pp^{N,\frac{L}{2}}[f]-(k+1)i \sum_{L_{\max}=L} \I_{\chi}\Big[ \sum_{n\in E_{N}} e_n \jb{ \M(\{f\}_{n}, y_{L_2}, \ldots, y_{L_{2k+1}}),e_n} \Big] \\
& = \Pp^{N,\frac{L}{2}}[f]+ \sum_{L_{\max}=L}\Pp_{N,L_2,\ldots,L_{2k+1}}[f],
\end{split}\label{PNLformula}
\end{align}
and hence 
\begin{align*}
\Pp^{N,L} = \sum_{L_{\max}\leq L}\Pp_{N,L_2,\ldots,L_{2k+1}}.
\end{align*}
Then, by choosing $0<T\ll 1$ sufficiently small, it follows from \eqref{PwZbb} that 
for all $L'\leq L$, the resolvent $\H^{N,L}$ of $\Pp^{N,L}$ satisfies 
\begin{align}
\max_{L'\leq L}\|\H^{N,L}\|_{Y^{s,b}(I) \to Y^{s,b}(I)} \les 1. \label{HNLbd}
\end{align}
with convergent Neumann series in the $Y^{s,b}(I) \to Y^{s,b}(I)$ operator norm with
\begin{align*}
\H^{N,L} =\text{Id}+\sum_{\l=1}^{\infty} (\Pp^{N,L})^{\l},
\end{align*}
Now, by the second resolvent identity, we have 
\begin{align*}
\mathfrak{h}^{N,L} = \H^{N,L}-\H^{N,\frac{L}{2}} &= (1-\Pp^{N,L})^{-1} -(1-\Pp^{N,\frac{L}{2}})^{-1}  \\
& =(1-\Pp^{N,L})^{-1} ( \Pp^{N,L}-\Pp^{N,\frac{L}{2}}) (1-\Pp^{N,L})^{-1} \\
& = \H^{N,\frac{L}{2}} ( \Pp^{N,L}-\Pp^{N,\frac{L}{2}}) \H^{N,L},
\end{align*}
and thus \eqref{hNLbound} follows from \eqref{PNLformula}, \eqref{HNLbd}, and \eqref{PwZbb} with the logarithmic loss due to the sums over $L_{2},\ldots, L_{2k+1}\leq L$, since 
\[
\Pp^{N,L}-\Pp^{N,\frac{L}{2}} = \sum_{\frac{L}2 < L_{\max} =L} \Pp_{N,L_2,\ldots,L_{2k+1}}.
\]
We remark that the lower bound for $L_{\max}$ on the right-hand side of the above formula is crucial for obtaining the decay in $L$ in \eqref{hNLbound}.

\section{The smoother remainder term}
\label{SEC:LWP}

In this section, we complete the proof of Proposition \ref{mainprop} by verifying the a priori bounds for the remainder terms $z_N$ in part (iii). In view of the inductive setup to establishing Proposition \ref{mainprop}, it remains to consider the case when $N=2M$. More precisely, we aim to show that 
\begin{align}
\|z_{2M}\|_{Y^{0,b}(I)} \leq (2M)^{-1+\g}. \label{z2M}
\end{align}
assuming that $\textsf{Local}(M')$ holds for all $M'\leq M$. Note that we have also just proved that (i) and (ii) in Proposition \ref{mainprop} hold as part of $\textsf{Local}(2M)$.

  We verify \eqref{z2M} by a continuity argument recalling that $z_{2M}$ satisfies the equation \eqref{eqn:zn2}. In particular, recalling \eqref{zeqA}, the right hand side of \eqref{eqn:zn2} is a linear combination of the terms
\begin{align*}
\mathcal{A}_{2M}^{(J)}( w_{N_1}, \ldots, w_{N_{2k+1}}), \quad J\in \{1,2,3,4,5,6\},
\end{align*}
where the input functions $\{w_{N_j}\}$ are either of Type (G), (C), or (D), and depend on $J$ in the following way: 

\medskip
\noi
\underline{\textbf{$J=1$:}} $\{w_{N_j}\}_{j=1}^{2k+1}$ are of any type and $N_{(1)}=N_{(2)}=2M$ .

\medskip
\noi
\underline{\textbf{$J=2$:}} $w_{N_1}=z_{2M}$, $\{w_{N_j}\}_{j=2}^{2k+1}$ are of any type and $N_{j}\le M$ for all $j\in \{2,\ldots, 2k+1\}$.

\medskip
\noi
\underline{\textbf{$J=3$:}} $w_{N_1}=\psi_{2M,L_{2M}}$, $n\neq n_1$, $\{w_{N_j}\}_{j=2}^{2k+1}$ are of any type, $N_{j}\le L_{2M}$ for all $j\in \{2,\ldots, 2k+1\}$.

\medskip
\noi
\underline{\textbf{$J=4$:}} $w_{N_1}=\psi_{2M,L_{2M}}$, $\{ w_{N_j}\}_{j=1}^{2k+1}$ are of any type with $\max_{j=2,\ldots, 2k+1}N_j > L_{2M}$.

\medskip
\noi
\underline{\textbf{$J=5$:}} $\{ w_{N_j}\}_{j=1}^{2k+1}$ are of any type with $N_2=2M$ and $N_j\le M$ for all $j\in \{1,\ldots, 2k+1\}\setminus \{2\}$.

\medskip
\noi
\underline{\textbf{$J=6$:}}  $\{w_{N_j}\}_{j=1}^{2k+1}$ are of any type, $N_{j}\leq M$ for all $j\in \{1,\ldots, 2k+1\}$, output frequency $\sim 2M$.

\medskip
\noi
Thus, in order to show \eqref{z2M}, it suffices to prove that
\begin{align}
\max_{J=1,\ldots, 6}\| \mathcal{A}_{2M}^{(J)}(w_{N_1},\ldots, w_{N_{2k+1}})\|_{Y^{0,b}(I)} \ll (2M)^{-1+\frac{1}{2}\g},
\label{Ab}
\end{align}
using each of the possibilities for $\{w_{N_j}\}_{j=1}^{2k+1}$ as delineated above. %except when all are of Type $D$ for which we need a different argument.

% with some $\g >0$ and $b', b > \frac12$.

We first follow the reductions in Section~\ref{SEC:RRO}, using \eqref{localtime} to gain a small factor of $T$ at the expense of a worse exponent $b_{-}$ instead of $b$ and after passing to the extensions $\{w^{\dag}_{N_j}\}_{j=1}^{2k+1}$ of $\{w_{N_j}\}_{j=1}^{2k+1}$. Then, the required estimate with time regularity $b_{-}$, i.e. \eqref{Ab} with $b$ replaced by $b-$, 
follows from interpolation between the bounds:
\begin{align}
\max_{J=1,\ldots, 6}\| \mathcal{A}_{2M}^{(J)}(w^{\dag}_{N_1},\ldots, w^{\dag}_{N_{2k+1}})\|_{Y^{0,1}} &\les (2M)^{100 k},  \label{Ab1}\\
\max_{J=1,\ldots,6}\| \mathcal{A}_{2M}^{(J)}(w^{\dag}_{N_1},\ldots, w^{\dag}_{N_{2k+1}})\|_{Y^{0,b'}} &\les (2M)^{-1+\frac{1}{4}\g}. \label{Ab'}
\end{align}

\begin{remark}\rm
As a matter of fact, an extra factor of $T^{\varepsilon_0}$ is allowed in \eqref{Ab'} in view of Lemma~\ref{PROP:short} and Lemma~\ref{LEM:TypeC} for some $\eps_0 > 0$. 
Moreover, this factor can absorb any occurrence of $T^{-\theta}$ appearing in the analysis by choosing $\theta \ll \eps_0$. 
For simplicity of notation, however, we will ignore these $T$-factors throughout the section and assume that the implicit constant in $\lesssim$ may depend on a positive power of $T$.
See also Remark \ref{RMK:4.4} for a similar consideration.
\end{remark}

Since $N_{j} \les M$ for all $j=1,\ldots,2k+1$, \eqref{Ab1} follows trivially from H\"{o}lder's inequality, Sobolev embeddings and that by the inductive set-up, we have
\begin{align*}
\| w^{\dag}_{N_j}\|_{Y^{0,b}}\les N_{j}. %\label{yLbd}
\end{align*}
Our main concern for the rest of this section is to establish \eqref{Ab'}. In the following, we will drop the $\dag$ notation. 

We dispense with the cases $J\in \{1,4,5\}$ entirely by physical space estimates. 
For $J=2$, we use physical space estimates, multilinear analysis in Fourier space, 
and our bound in \eqref{PwZbb} which we justified already in Section \ref{SEC:RRO}. 
Most of the cases for $J=6$ can be dealt with by arguing as in the appropriate $J\in \{1,2,4,5\}$ case. 
Then, it remains to control the $J=3$ and the remaining cases in $J=6$, for which we need to use Fourier space arguments and multilinear probabilistic oscillations.

\subsection{$J\in \{1,4\}$ cases} 

When $J=1$, two input functions (of any type) have frequencies of size $2M$. 
The $J=4$ case is similar except the largest frequency corresponds to a Type (C) term and we recover the final regularity thanks to the lower bound $N_{\rm (2)}\geq L_{2M} \ges (2M)^{1-\kk}$,
where $N_{\rm (2)}$ is the second largest frequency among $N_j$ for $1 \le j \le 2k+1$.

\begin{lemma}\label{LEM:J1}
It holds that 
\begin{align*}
\max_{J=1,4}\| \mathcal{A}_{2M}^{(J)}(w^{\dag}_{N_1},\ldots, w^{\dag}_{N_{2k+1}})\|_{Y^{0,b'}}  &\les (2M)^{-1+\frac{1}{4}\g}
\end{align*}
for $\{w_{N_j}\}_{j=1}^{2k+1}$ of any combination of \textup{Types} \textup{(G), (C)} or \textup{(D)} when $J=1$ and $w_{N_1}=\psi_{2M,L_{2M}}$ when $J=4$.
\end{lemma}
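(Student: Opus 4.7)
The plan is to dualize and apply H\"older's inequality in physical space--time, bounding each factor via the Strichartz estimates of Lemma~\ref{LEM:Dstrich} and Lemma~\ref{LEM:TypeC}. The decay $(2M)^{-1+\g/4}$ is to come from extracting a $(2M)^{-\frac12+}$ Strichartz gain from each of the two high-frequency inputs, while the remaining $2k-1$ low-frequency factors are controlled using the eigenfunction bounds of Lemma~\ref{LEM:efesti}.

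First I would reduce to a multilinear estimate on the nonlinearity exactly as in Section~\ref{SUB:RRO}: apply Lemma~\ref{PROP:short} to pass to extensions, use Lemma~\ref{LEM:duhamel} to obtain
\begin{equation*}
\|\mathcal A_{2M}^{(J)}\|_{Y^{0,b'}(I)}\le \|\mathcal A_{2M}^{(J)}\|_{Y^{0,b}}\les \|\P_{\le 2M}\M(w_{N_1}^\dag,\ldots,w_{N_{2k+1}}^\dag)\|_{Y^{0,b-1}},
\end{equation*}
then dualize against a test function $h\in Y^{0,1-b}$ with $1-b$ slightly below $\tfrac12$. By Sobolev embedding in time together with the compact time support of $\chi_T$, one has $\|h\|_{L^{q_0}_tL^2_x}\les \|h\|_{Y^{0,1-b}}$ for any fixed finite $q_0$, giving enough flexibility in the H\"older exponents.

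For $J=1$, with $N_1=N_2=2M$, I would place $w_{N_1}$ and $w_{N_2}$ into $L^4_{t,x}$, where Lemma~\ref{LEM:Dstrich} gives $(2M)^{-1+\g+}$ for Type (D) and Lemma~\ref{LEM:TypeC} gives $(2M)^{-\frac12+}$ for Type (C)/(G); the product on the two highest inputs is then at worst $(2M)^{-1+O(\g)}$. The remaining $2k-1$ inputs would be placed into $L^{q_j}_tL^{r_j}_x$ with $r_j\sim 2(2k-1)$, which closes H\"older and yields per-factor decay $N_j^{-\frac1{2k-1}+O(\ta_0)}$ for Type (C)/(G) or $N_j^{\g-\frac2{2k-1}+}$ for Type (D); both are summable over dyadic $N_j\le 2M$ since $\g,\ta_0\ll \frac1{2k-1}$ by \eqref{gamma}. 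For $J=4$, $w_{N_1}=\psi_{2M,L_{2M}}$ is of Type (C) and the secondary frequency satisfies $N_{(2)}>L_{2M}\ges(2M)^{1-\kk}$; the same H\"older/Strichartz arrangement produces a product $(2M)^{-\frac12+}\cdot N_{(2)}^{-\frac12+}\les(2M)^{-1+\kk/2+}$ on the two highest inputs, which fits within the $\g/4$ budget since $\kk\ll\g$ by \eqref{gamma}.

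The main obstacle is the interaction between the many ($2k-1$) low-frequency inputs and the non-uniform $L^p$ eigenfunction bounds on $\D$: because $\|e_n\|_{L^p}\sim n^{\frac12-\frac2p}$ for $p>4$ by Lemma~\ref{LEM:efesti}, any $L^{r_j}_x$ placement with $r_j\gg 4$ pays $N_j^{\frac12-\frac2{r_j}}$ per factor, which could accumulate across the $2k-1$ low-frequency slots and overwhelm the Strichartz decay. The balance I envisage---taking $r_j$ just large enough for H\"older to close and no larger---is what keeps the per-factor loss summable, yielding a final bound $(2M)^{-1+O(\g+\kk+\ta_0)}$ inside the $(2M)^{-1+\g/4}$ budget.
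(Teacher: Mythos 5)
The overall strategy you lay out—dualize, then apply H\"older in physical space--time together with the probabilistic/deterministic Strichartz estimates of Lemmas~\ref{LEM:Dstrich} and~\ref{LEM:TypeC}—is indeed the route the paper takes, and your Type~(D)--Type~(D) and Type~(D)--Type~(C) configurations go through essentially as you describe. However, your H\"older bookkeeping does not close in the hardest configuration, and this is where the proof actually has content.

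Concretely, you place the dual test function $h$ in $L^{q_0}_tL^2_x$ (so only $L^2$ in space) and the two high-frequency inputs $w_{N_1},w_{N_2}$ in $L^4_{t,x}$. In the spatial variable this already uses up the full budget: $\tfrac12+\tfrac14+\tfrac14=1$, so the remaining $2k-1$ low-frequency inputs are forced into $L^\infty_x$, not into $L^{r_j}_x$ with $r_j\sim 2(2k-1)$ as you claim. For Type~(D) inputs this costs $N_j^\gamma$ per factor with no decay. When $w_{N_1},w_{N_2}$ are both of Type~(C), the two high factors only supply $(2M)^{-\frac12+}\cdot(2M)^{-\frac12+}=(2M)^{-1+}$, so the accumulated $(2M)^{O(k\gamma)}$ from the low factors pushes the total to $(2M)^{-1+O(k\gamma)}$, which cannot fit inside the budget $(2M)^{-1+\gamma/4}$ for any admissible $\gamma>0$; your concluding claim that the bound is ``inside the $(2M)^{-1+\gamma/4}$ budget'' does not check out. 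The same obstruction appears in $J=4$, where $w_{N_1}=\psi_{2M,L_{2M}}$ is of Type~(C).

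The missing ingredient is to exploit the frequency localization of the dual function: the operator carries a $\P_{\le 2M}$, so by the disc $L^4$-Strichartz estimate (Lemma~\ref{LEM:L4Strich}) one has $\|\P_{\le 2M}v\|_{L^4_{t,x}}\les (2M)^{\eps}\|v\|_{X^{0,b'}}$, allowing the dual to be placed in $L^4_{t,x}$ at the cost of only $(2M)^\eps$. This frees an exponent slot: one low-frequency factor $w_{N_3}$ can then be placed in $L^{4(1+\eps)}_{t,x}$, extracting $N_3^{-\frac{1}{2(1+\eps)}}$ (Type~(C) being worst), which is precisely what absorbs the product $\prod_{j\ge 4}N_j^{O(\gamma)}$ from the remaining low factors, since $4(2k-1)\gamma<1$ by~\eqref{gamma}. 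With this modification your argument closes in all type configurations and in $J=4$ as well, matching the paper's proof.
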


\begin{proof}
We focus first on the case $J=1$.
As we will not make use of the conjugations, by symmetry, we assume that $N_{1}=N_{(1)}$ and $N_{2}=N_{(2)}$. 
%Set $\wt{w}_{N_{j}}(t) = S(t) w_{N_{j}}(t)$, which undoes the interaction representation.
By duality, it suffices to control 
\begin{align}
\bigg|\iint_{\R\times \mathbb{D}} \P_{\leq 2M}v  \prod_{j=1}^{2k+1} (S(t)w_{N_j})^{\iota_j} dxdt \bigg|, \label{duality}
\end{align}
for any $v\in X^{0,b'}$ with $\|v\|_{X^{0,b'}}\leq 1$, $b'<\frac 12$, and with $N_1=N_2=2M$.
Here $S(t)$ in \eqref{duality} is the linear semi-group defined in \eqref{St}.

We consider a few cases depending on the types of the two highest frequency terms. 
Assume that $w_{N_1}=z_{2M}$ and $w_{N_2}=z_{2M}$. Then for $\eps>0$ to be chosen later, by H\"older's inequality with $r=2(2k-1)[1+\eps^{-1}]$ and \eqref{Dphys}, we have
\begin{align}
\eqref{duality} &\leq \|S(t)z_{2M}\|^2_{L^{4(1+\eps)}_{t,x}}  \|\P_{\leq 2M}v\|_{L^{2}_{t,x}} \prod_{j=3}^{2k+1} \|S(t)w_{N_j}\|_{L^{r}_{t,x}} \notag \\
& \les  (2M)^{-2+2\g +\frac{2\eps}{1+\eps}}\prod_{j=3}^{2k+1} \|S(t)w_{N_j}\|_{L^{r}_{t,x}}. 
\label{J1zz}
\end{align}
In view of \eqref{Dphys} and Lemma~\ref{LEM:TypeC}, we have the following crude bound $T^{-1} M$-certainly,
\begin{align}
\label{Lr_general} 
\|S(t)w_{N_j}\|_{L^{r}_{t,x}} \les N_j^{\g} \les (2M)^{\g}
\end{align}
amongst all $w_{N_j}$ of either Type (C) or Type (D),
provided $r \le \infty$. 
Plugging \eqref{Lr_general} in \eqref{J1zz}, we get
\begin{align*}
\eqref{duality} \les (2M)^{-2+(2k+1)\g + 2\eps}  \ll (2M)^{-1+\frac{1}{4}\g} 
\end{align*}
provided that $\g$ and $\eps>0$ are chosen small enough.

Now, assume that, say, $w_{N_1}$ is of Type (D) and $w_{N_2}$ is of Type (C). We proceed as before now using Lemma~\ref{LEM:TypeC} and obtain
\begin{align*}
\begin{split}
\eqref{duality} &\leq \|S(t) z_{2M}\|_{L^{4(1+\eps)}_{t,x}} \|S(t) \psi_{2M,L_{2M}}\|_{L^{4(1+\eps)}_{t,x}}  \|\P_{\leq 2M}v\|_{L^{2}_{t,x}} \prod_{j=3}^{2k+1} \|S(t)w_{N_j}\|_{L^{r}_{t,x}} \notag \\
& \les  (2M)^{-1 - \frac1{2(1+ \eps)} + 2\eps +\g }\prod_{j=3}^{2k+1} \|S(t)w_{N_j}\|_{L^{r}_{t,x}} \\
& \les (2M)^{- \frac32 + 3\eps + 2k\g }  \ll (2M)^{-1+\frac{1}{4}\g},
\end{split}
\end{align*} 
for $\g$ chosen sufficiently small.

Finally, we assume that both $w_{N_1}$ and $w_{N_2}$ are of Type (C). Let $q=8(k-1)(1+\eps^{-1})$. By H\"{o}lder's inequality Lemma~\ref{LEM:L4Strich}, Lemma \ref{LEM:Dstrich}, Lemma~\ref{LEM:TypeC}, 
and recalling that $N_{4}\les M$,  we get 
\begin{align}
\begin{split}
\eqref{duality} & \les \| S(t) \psi_{2M,L_{2M}}\|_{L^{4}_{t,x}}^{2} \|\P_{\leq 2M}v\|_{L^{4}_{t,x}}  \|S(t)w_{N_3}\|_{L^{4(1+\eps)}_{t,x}} \prod_{j=4}^{2k+1}\|S(t)w_{N_j}\|_{L^{q}_{t,x}}  \\
&\les (2M)^{-1+\eps} N_{3}^{-\frac{1}{2(1+\eps)}}N_{4}^{(2k-1)\g}  \\
& \les (2M)^{-1+\eps} \les (2M)^{-1+\frac{1}{4}\g}
\end{split} \label{J1J4}
\end{align}
provided that $4(2k-1)\g <1$ and $\eps<\frac{1}{4}\g$.
The bound on $w_{N_3}$ follows by considering the two cases of Type (C) or (D), with Type (C) being the most restrictive case.
This completes the proof of Lemma~\ref{LEM:J1} for $J=1$.

When $J=4$, we have two cases: either $w_{N_2}$ is of Type (C) or (D). We only give the details for the harder Type (C) case. We argue as in \eqref{J1J4} and use that $N_{2} \ges (2M)^{1-\kk}$ to obtain
\begin{align*}
\eqref{duality}  & \les \|\P_{\leq 2M}v\|_{L^{4}_{t,x}}  \| S(t) \psi_{2M, L_{2M}}\|_{L^{4}_{t,x}}\| S(t) w_{N_2} \|_{L^{4}_{t,x}} \|S(t)w_{N_3}\|_{L^{4(1+\eps)}_{t,x}} \prod_{j=4}^{2k+1}\|S(t)w_{N_j}\|_{L^{q}_{t,x}}  \\
& \les (2M)^{-\frac 12+\eps}N_{2}^{-\frac 12+\eps} N_{3}^{-\frac{1}{2(1+\eps)}} N_{4}^{(2k-1)\g} \\
& \les (2M)^{-1+\frac{\kk}{2}+5\eps} \ll (2M)^{-1+\frac{1}{4}\g},
\end{align*}
by using \eqref{gamma} and choosing $\eps>0$ sufficiently small. 
\end{proof}

\subsection{$J=5$ case}

In this case, the physical space argument used in the proof of Lemma \ref{LEM:J1} is insufficient. 
To complete the proof, a Fourier-analytic approach is essential, making use of the significant gain from the phase function as detailed in \eqref{J42}.

\begin{lemma}\label{LEM:J5}
It holds that
\begin{align}
\| \mathcal{A}_{2M}^{(5)}(w^{\dag}_{N_1},\ldots, w^{\dag}_{N_{2k+1}})\|_{Y^{0,b'}}  &\les (2M)^{-1+\frac{1}{4}\g} \label{J50}
\end{align}
for $\{w_{N_j}\}_{j=1}^{2k+1}$ of any combination of \textup{Types} \textup{(G), (C)} or \textup{(D)}.
\end{lemma}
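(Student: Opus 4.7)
The crux of the $J=5$ case is that, unlike in Lemma~\ref{LEM:J1}, the single high-frequency input $w_{2M}$ is not balanced against any other high-frequency input, so a pure physical-space H\"older strategy extracts at most $M^{-1/2}$ from the random decay of $w_{2M}$---half a power short of the target $(2M)^{-1+\g/4}$. The plan is to recover the missing $M^{-1/2}$ by exploiting the oscillation of the phase function $\Phi(\bar n)$ on the Fourier side. After reducing as in \eqref{Ab1}--\eqref{Ab'} to the $Y^{0,b'}$ norm, duality together with the adjoint form of Lemma~\ref{LEM:duhamel} reduces the task to bounding
\[
\sup_{\|v\|_{Y^{0,1-b'}}\leq 1}\Big|\int_\R\!\int_{\mathbb D} \overline{v}\,\P_{\leq 2M}\mathcal M\big(w_{N_1}^\dag, w_{2M}^\dag, w_{N_3}^\dag, \ldots, w_{N_{2k+1}}^\dag\big)\,dx\,dt\Big|,
\]
with $1-b'>\tfrac12$. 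Passing to the interaction representation, this pairing rewrites as a $(2k+2)$-linear Fourier sum over frequencies $(n,\bar n)$ and modulations $(\tau_0, \tau_1, \ldots, \tau_{2k+1})$ constrained by $\tau_0-\sum_j \iota_j \tau_j = \Phi(\bar n)$, weighted by the eigenfunction correlation $c(\bar n)$.

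I would then dyadically decompose $|\Phi(\bar n)|\sim D$ with $D\in 2^{\N_0}$, and handle the non-resonant regime $D\gtrsim M^2$ first. The modulation constraint forces at least one of $|\tau_0|$ or some $|\tau_{j^*}|$ to be $\gtrsim M^2/(2k+2)$. Pigeonholing: if $|\tau_0|\gtrsim M^2$, the weight $\jb{\tau_0}^{1-b'}$ on $v$ gives $\|\P^t_{|\tau|\gtrsim M^2}v\|_{L^2_{t,x}}\lesssim M^{-2(1-b')}$; if $|\tau_{j^*}|\gtrsim M^2$, the weight $\jb{\tau_{j^*}}^{b}$ gives $\|\P^t_{|\tau|\gtrsim M^2}w_{N_{j^*}}\|_{L^2_{t,x}}\lesssim M^{-2b}\|w_{N_{j^*}}\|_{Y^{0,b}}$. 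In either subcase, an $L^\infty_t L^2_x \times L^2_{t,x} \times \prod L^\infty_{t,x}$ H\"older pairing---using $\|v\|_{L^\infty_t L^2_x}\lesssim 1$, the sharp $\|w_{2M}\|_{L^\infty_t L^2_x}\lesssim M^{-1/2+}$ from Lemma~\ref{LEM:TypeC}, and the $L^\infty_{t,x}$ control on the remaining low-frequency inputs from Lemma~\ref{LEM:TypeC} and Lemma~\ref{LEM:Dstrich}---delivers a total gain $M^{-1-\eps}$, comfortably beyond $(2M)^{-1+\g/4}$.

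The main obstacle lies in the resonant regime $D\ll M^2$, which occurs whenever the $k+1$ odd-indexed squared eigenvalues nearly cancel $\ld_n^2 + \ld_{n_2}^2 + \sum_{j\text{ even},\,j\neq 2}\ld_{n_j}^2$; since $n_2\in E_{2M}\setminus E_M$ only gives $\ld_{n_2}^2\in[\pi^2 M^2, 4\pi^2 M^2]$, such configurations exist already for $k=1$ (for instance $n_1,n_3\sim M/\sqrt 2$, $n_2\sim M+1$, $n\lesssim 1$). Here the phase oscillation is too weak to produce the $M^{-1/2}$ gain directly, and one must combine (i) the Hilbert--Schmidt bound on the restricted base tensor ${\mathrm T}^{\mathrm b, m, S}$ of \eqref{baseS} with $S=\{|\Phi|\sim D\}$, strengthening Lemma~\ref{LEM:Tbs}, together with the eigenfunction correlation estimates Lemma~\ref{LEM:ev1} and the off-diagonal decay Lemma~\ref{LEM:ev5} when applicable, to control the deterministic multilinear factor; with (ii) the conditional Wiener chaos bound Lemma~\ref{LEM:condchaos} applied to the Gaussian coefficients of $w_{2M}$, which are independent of all lower-frequency objects, extracting the square-root probabilistic cancellation predicted by \eqref{prob_scaling}. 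Summing dyadically in $D$ and in the Type~(C) modulation windows from Lemma~\ref{LEM:TypeC} then closes the bound $\|\mathcal{A}_{2M}^{(5)}\|_{Y^{0,b'}}\lesssim (2M)^{-1+\g/4}$.
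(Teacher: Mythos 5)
You correctly identify that the $J=5$ case has only one high-frequency input and that the phase function must be exploited, but you overlook the structural simplification that makes this case elementary and do not address the frequency reduction that the argument hinges on. The paper first reduces to the regime $N_{(2)} \le \tfrac{1}{100k}\cdot 2M$: when $N_{(2)}$ is larger, two inputs have frequencies comparable to $M$ and the purely physical-space H\"older argument from Lemma~\ref{LEM:J1} already closes (your $k=1$ example with $n_1, n_3 \sim M/\sqrt 2$ lives precisely here). In the remaining regime every factor other than $w_{N_2}$ is localized to frequencies $\ll M$, and---because $n_2$ is even-indexed and therefore enters $\Phi(\cj n)$ in \eqref{phase} with the same positive sign as the output $n$---one has \emph{unconditionally} $|\Phi(\cj n)| \ge \tfrac12 \ld_{n_2}^2 \ges M^2$, which is exactly \eqref{J42}. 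Thus the ``resonant regime $D \ll M^2$'' you flag as the main obstacle is vacuous after the reduction: when it does occur it is the two-high-frequency case, which is already handled deterministically. There is no need for the random-tensor machinery here; that is reserved for $J \in \{3, 6\}$, where a genuine high-low-to-high near-resonance survives.

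Your treatment of $D \ges M^2$ is broadly the right idea and close to the paper's: dyadically decompose the modulations, use $K_{\max} \ges M^2$ and the weights $\jb{\tau_{j^*}}^b$ or $\jb{\tau_0}$ to extract the power of $M$, and handle the remaining factors with Lemmas~\ref{LEM:Dstrich} and~\ref{LEM:TypeC} (the paper uses an $L^2_{t,x}\times L^4_{t,x}\times \prod L^{8k}_{t,x}$ pairing rather than your $L^\infty_t L^2_x \times L^2_{t,x}\times \prod L^\infty_{t,x}$, but the bookkeeping is comparable). The genuine gap is that, having not made the $N_{(2)}$ reduction, you are left with an unresolved resonant case for which your plan is only a sketch: carrying it out would amount to redoing the multilinear random-tensor and conditional Wiener chaos analysis of Section~\ref{SEC:LWP}, which is both significantly harder than the lemma requires and entirely unverified in your write-up. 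The missing idea is the $N_{(2)}$ reduction followed by the sign observation \eqref{J42}.
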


\begin{proof}

We may assume that 
\begin{align}
N_{(2)} \leq \tfrac{1}{100 k } 2M, \label{J41}
\end{align}
 since otherwise we can apply the same argument as in Lemma~\ref{LEM:J1}.  Under the assumption \eqref{J41}, we have 
 \begin{align}
| \Phi(\cj{n})| =| \ld_{n}^{2}+\ld_{n_2}^{2}-\ld_{n_1}^{2} + \ldots+ \ld_{n_{2k+1}}^{2}| \geq \tfrac{1}{2} \ld_{n_2}^{2} \sim M^{2}. \label{J42}
\end{align}
As in the proof of Lemma~\ref{LEM:J1} we also argue by duality, controlling the left-hand side of \eqref{J50} by
\begin{align}
\iint_{\R\times \mathbb{D}} \P_{\leq 2M}v  \prod_{j=1}^{2k+1} S(t)w_{N_j}^{\iota_j} dxdt , \label{duality2}
\end{align}
for $v\in X^{0,b'}$ with $b'<\frac 12$, $N_{2}=2M$ and with \eqref{J41}. With $\s_0:=\tau+\ld_{n}^{2}$ the modulation variable for $\P_{\leq 2M}v$ and $\s_j:= \tau_j+\ld_{n_j}^{2}$ the modulation for $S(t)w_{N_j}$, $j\in \{1,\ldots, 2k+1\}$, \eqref{J42} implies 
\begin{align}
\max_{j=0,\ldots, 2k+1} \jb{\s_j} \geq C M^{2}. \label{J43}
\end{align}
For $K>0$, let $\Q_{\geq K}$ be the space-time Fourier multiplier operator with symbol $\ind_{\{ \jb{\tau+\ld_n^2} \geq K\}}(\tau,n)$.
%Note that
%\begin{align*}
%\| \Q_{\geq K}f\|_{X^{0,b}} \les K^{-b} \| f\|_{X^{0,b}}.
%\end{align*}
We perform a dyadic decomposition amongst the modulations 
\begin{align*}
\eqref{duality2} = \sum_{ \substack{K_0, \ldots ,K_{2k+1} \\ K_{\max} \geq CM^2}} \iint_{\R\times \mathbb{D}} \P_{\leq 2M} \Q_{K_0}v  \prod_{j=1}^{2k+1} (\Q_{K_j}S(t)w_{N_j})^{\iota_j} dxdt =:\sum_{ \substack{K_0, \ldots ,K_{2k+1} \\ K_{\max} \geq CM^2}} I(\cj{N}, \cj{K})
\end{align*}
with $K_{\max}:=\max_{j=0,\ldots, 2k+1}K_j$ and the summation restriction arising from \eqref{J43}.
Assume first that $w_{N_2}$ is of Type (D).
If $K_0=K_{\max}$, then by \eqref{Dphys2} (for Type (D) terms) and \eqref{C_estimate1} (for Type (C) terms),
\begin{align*}
|I(\cj{N}, \cj{K})| &\leq \| \P_{\leq 2M} \Q_{K_0}  v \|_{L^{2}_{t,x}} \| \Q_{K_2} S(t) z_{2M}\|_{L^4_{t,x}} \prod_{ \substack{j=1 \\ j\neq 2}}^{2k+1} \| \Q_{K_j} S(t)w_{N_j}\|_{L^{8k}_{t,x}} \\
& \les K_{0}^{-b'} (2M)^{-1+\g+\eps} K_{\max}^{2k \eps} (2M)^{2k \g}  \\
& \les K_{0}^{-b'+2k \eps}  (2M)^{-1+4k\g} \\
& \les K_0^{-\eps} (2M)^{-1-2b'+6k \eps +4k\g} \ll (2M)^{-1+\frac{1}{4}\g} K_0^{-\eps},
\end{align*}
provided that $\eps,\g>0$ are small enough.
If $S(t)w_{N_2} = S(t) z_{2M}$ has the largest modulation, then we place it instead into $L^{2}_{t,x}$ and similarly, if one of $S(t)w_{N_j}$ for some $j\in \{1,3,\ldots, 2k+1\}$ has the maximum modulation, it is placed into $L^2_{t,x}$, the $\P_{\leq 2M}\Q_{K_0}v$ into $L^{4+}_{t,x}$ and $\Q_{K_2} S(t)w_{N_2}$ into $L^4_{t,x}$.
In each of these cases, we end up with a final factor of $(2M)^{-2+O(\g)}$. 

Now suppose that $ w_{N_2}$ is of Type (C). We apply the same argument, but instead of always gaining at least $(2M)^{-1+\g+}$ from $S(t)w_{N_2}$, we only gain at least $(2M)^{-\frac 12}$. This is more than sufficient to obtain a final bound of size $(2M)^{-1+\frac{1}{4}\g}K_{\max}^{0-}$ on $I(\cj{N}, \cj{K})$ by considering cases depending on which function has the maximum modulation and 
using  \eqref{Dphys2}, \eqref{J43}, and \eqref{C_estimate2}, when necessary.
\end{proof}

\subsection{$J=2$ case}

This case requires a combination of physical space and Fourier-analytic arguments, along with the off-diagonal decay of the eigenfunction correlation function $c(\bar{n})$ (defined in \eqref{c_n}), as provided by Lemma \ref{LEM:ev5}.

\begin{lemma}\label{LEM:J2}
It holds that
\begin{align}
\| \mathcal{A}_{2M}^{(2)}(z_{2M}, w_{N_2}^{\dag}, \ldots, w^{\dag}_{N_{2k+1}})\|_{Y^{0,b}}  &\les (2M)^{-1+\frac{1}{4}\g} \label{J20}
\end{align}
for $\{w_{N_j}\}_{j=2}^{2k+1}$ of any combination of \textup{Types} \textup{(G), (C)} or \textup{(D)}.
\end{lemma}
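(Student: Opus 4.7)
The strategy is to split $\mathcal{A}^{(2)}_{2M}(z_{2M}, w_{N_2}, \ldots, w_{N_{2k+1}})$ in Fourier space according to whether the output frequency $n$ coincides with the frequency $n_1$ of the first input $z_{2M}$:
\begin{align*}
\mathcal{A}^{(2)}_{2M} = \mathcal{A}^{(2),\mathrm{diag}}_{2M} + \mathcal{A}^{(2),\mathrm{off}}_{2M},
\end{align*}
corresponding respectively to the restrictions $n = n_1$ and $n \neq n_1$. The diagonal part is precisely the resonant contribution that was absorbed into $\psi_{2M,L_{2M}}$ when $N_j \leq L_{2M}$ for all $j\geq 2$; here, since we only assume $N_j \leq M$, it survives and must be controlled by the random resonant operator bound.

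For the diagonal part, observe that $\mathcal{A}^{(2),\mathrm{diag}}_{2M}$ is exactly $\mathcal{P}_{2M, N_2, \ldots, N_{2k+1}}[z_{2M}]$ as defined in \eqref{linmap}, where each slot $w_{N_j}$ plays the role of (one Type component of) $y_{N_j}$. Invoking the bound \eqref{PwZbb} from Proposition~\ref{mainprop}(i), which has already been established in Subsection~\ref{SUB:RRO}, together with the continuity hypothesis $\|z_{2M}\|_{Y^{0,b}(I)} \leq (2M)^{-1+\g}$ and the embedding $Y^{0,b} \hookrightarrow Y^{0,b'}$, we obtain
\begin{align*}
\|\mathcal{A}^{(2),\mathrm{diag}}_{2M}\|_{Y^{0,b'}(I)} \lesssim L_{\max}^{-\theta_0} T^{b_- - b}(2M)^{-1+\g},
\end{align*}
which is summable over dyadic $(N_2, \ldots, N_{2k+1})$ with $L_{\max} = \max_{j\geq 2} N_j$ and yields the required $\ll (2M)^{-1+\frac{1}{4}\g}$ contribution. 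This step is essentially a direct application of a previously established estimate and should go through quickly.

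For the off-diagonal part, the key tool is the off-diagonal decay of the eigenfunction correlation $c(\bar n)$ from Lemma~\ref{LEM:ev5}. Since $n_1$ is the frequency of $z_{2M}$ (so $n_1 \sim 2M$) and $n_{(2)} \leq \max_{j\geq 2} N_j \leq M$, the condition $|n - n_{(1)}| \ges n_{(2)}$ is automatically satisfied once $|n - n_1| \gtrsim n_{(2)}$, giving
\begin{align*}
|c(n,n_1,n_2,\ldots,n_{2k+1})| \lesssim \frac{n_{(2)} n_{(3)}^{\eps}}{|n - n_1|}\prod_{j=4}^{2k+1} n_{(j)}^{1/2}.
\end{align*}
I would split the off-diagonal sum further into the region $1 \le |n - n_1| \lesssim n_{(2)}$, where one uses the crude bound from Lemma~\ref{LEM:ev1} but benefits from the fact that the set of admissible $n$ at fixed $n_1$ has cardinality $O(n_{(2)})$, and the region $|n - n_1| \gg n_{(2)}$, where Cauchy-Schwarz in $(n, n_1)$ via $\sum_{n \neq n_1} |n - n_1|^{-2} = O(1)$ converts the decay into a factor of $n_{(2)}$ lost with at most logarithmic overhead. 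After this reduction, one estimates the resulting multilinear expression by duality against $v \in X^{0, b'}$, placing the lower-frequency factors into mixed-norm spaces via the physical-space Strichartz estimates in Lemma~\ref{LEM:TypeC} and Lemma~\ref{LEM:Dstrich}, while $z_{2M}$ contributes its bootstrap bound $(2M)^{-1+\g}$.

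The main obstacle will be the sub-case $|n - n_1| \lesssim n_{(2)}$: here Lemma~\ref{LEM:ev5} provides no gain, and one must instead rely on combining Lemma~\ref{LEM:ev1} with the fact that this sub-case is confined to a thin frequency slab together with modulation gains from the phase $\Phi(\bar n)$, which is bounded below by $|\ld_n^2 - \ld_{n_1}^2| + O(n_{(2)}^2)$ via the asymptotics in \eqref{ev}. When this phase gain is insufficient on its own (for instance when $n_{(2)}$ is close to $M$ and several $w_{N_j}$ are of Type (C)), one performs a dyadic modulation decomposition as in the proof of Lemma~\ref{LEM:J5}, placing the largest-modulation factor in $L^2_{t,x}$ and using the $L^4_{t,x}$-Strichartz estimate of Lemma~\ref{LEM:L4Strich} on the remainder; this mirrors the forthcoming Fourier space analysis needed for the $J=3$ case and will in particular exploit the bootstrap bound $\|z_{2M}\|_{Y^{0,b}} \leq (2M)^{-1+\g}$ to produce the required $(2M)^{-1+\frac{1}{4}\g}$ decay after summation over all dyadic frequency and modulation scales.
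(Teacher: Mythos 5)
The diagonal part of your split is correct and matches the paper: once you observe that the $n=n_1$ contribution is precisely $\mathcal{P}_{2M,N_2,\ldots,N_{2k+1}}[z_{2M}]$, the bound \eqref{PwZbb} together with the bootstrap hypothesis on $z_{2M}$ closes that piece directly in $Y^{0,b}$ (no interpolation or embedding is needed here; the embedding direction you cite, $Y^{0,b}\hookrightarrow Y^{0,b'}$, goes the wrong way for what you would want to use it for).

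The off-diagonal part, however, has a genuine gap. The paper's proof begins with a reduction that you skip: it first handles the case when some $N_{j_*}\gtrsim M^{\beta}$ for $j_*\geq 2$ by a pure physical-space duality argument (placing $S(t)w_{N_{j_*}}$ in $L^2_{t,x}$ and gaining $N_{j_*}^{-1/2+\eps}$), and only afterwards performs the Fourier-side off-diagonal analysis under the hypothesis $N_j\ll M^{\beta}$ for all $j\geq 2$. This reduction is essential: it makes $n_{(2)}^2\ll M^{2\beta}$, so that in the near-diagonal region $1\leq |n-n_1|\lesssim M^{\beta_2}$ (with $n\neq n_1$) one has $|\ld_n^2-\ld_{n_1}^2|\gtrsim |n-n_1|\,M\gtrsim M\gg n_{(2)}^2$ and hence $|\Phi(\bar n)|\gtrsim M$ automatically. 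Your proposal attempts to treat all $N_j\leq M$ directly, and in the near-diagonal sub-case you claim that $\Phi(\bar n)$ is ``bounded below by $|\ld_n^2-\ld_{n_1}^2|+O(n_{(2)}^2)$''; the correct inequality is $|\Phi(\bar n)|\geq |\ld_n^2-\ld_{n_1}^2| - O(n_{(2)}^2)$, and when $n_{(2)}\sim M$ this lower bound is vacuous for $|n-n_1|$ of size $O(1)$ since $|\ld_n^2-\ld_{n_1}^2|\sim |n-n_1|\,M$ can be $\ll M^2\sim n_{(2)}^2$. Your suggested fallback---the modulation decomposition from Lemma~\ref{LEM:J5}---does not rescue this, because that argument hinges on the structural lower bound $|\Phi(\bar n)|\gtrsim M^2$ (available there via \eqref{J41}--\eqref{J42} because $N_{\mathrm{even}}=2M$ while $N_{(2)}\leq \tfrac{1}{100k}2M$), which is not present in the $J=2$ configuration when $n_{(2)}$ is comparable to $M$. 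So the near-diagonal, high-$n_{(2)}$ region is left uncontrolled, and the argument does not close without the paper's preliminary reduction to $N_j\ll M^{\beta}$.
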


\begin{proof}
By using the same arguments that we made for the $J\in \{1,4\}$ cases, we reduce to an essential part. 
Suppose that there exists $j_{\ast}\in\{2, \ldots, 2k+1\}$ such that $N_{j_{\ast} }\ges M^{\be}$ for some $0<\be  \ll 1$ to be chosen later. In this case, we establish \eqref{Ab'} and then interpolate with \eqref{Ab1} to obtain the desired result at time regularity $b>\frac 12$.
 We only consider the worst case that $w_{N_{j_{\ast}}}$ is of Type (C) or (G). Then by duality and H\"{o}lder's inequality,
\begin{align*}
\bigg|\iint_{\R\times \mathbb{D}} &\P_{\leq 2M}v \cdot S(t) z_{2M}  \prod_{j=2}^{2k+1} S(t)w_{N_j}^{\iota_j} dxdt \bigg|  \\
& \les  \|\P_{\leq 2M}v\|_{L^{4(1+\eps)}_{t,x}} \| S(t) z_{2M}\|_{L^{4(1+\eps)}_{t,x}}   \| S(t)w_{N_{j_{\ast}}} \|_{L^{2}_{t,x}} 
\prod_{ \substack{j=2 \\ j\neq j_{\ast} }}^{2k+1}  \| S(t)w_{N_j}\|_{L^{r}_{t,x}} \\
& \les (2M)^{-1+\g+2\eps} N_{j_{\ast}}^{-\frac{1}{2}+\eps} (2M)^{2k \g} \\
& \les (2M)^{-1+(2k+1)\g+3\eps - \frac{\be}{2}}  \ll (2M)^{-1+\frac{1}{4}\g} 
\end{align*}
provided for any $\be =c_0 k\g<1$ with $\g \ll 1/k$ and $c_0>0$ a fixed universal constant.

Thus, we may assume that
\begin{align} 
N_{j} \ll M^{\be} \quad \text{for all}\quad j\in \{2,\ldots, 2k+1\}. \label{J21}
\end{align}
We have thus reduced to considering the following portion of $\mathcal{A}_{2M}^{(2)}$:
\begin{align*}
&   - (k+1)i\, \ind_{\{N_j \ll M^{\be},\, j \neq 1\}}  \chi (t) \int_0^t \chi (t') \P_{\le 2M}   \mathcal{M} ( z_{2M} ,  w_{N_2}, \cdots, w_{N_{2k+1}}) (t')  dt' \\ 
& = - (k+1)i \,  \ind_{\{N_j \ll M^{\be},\, j \neq 1\}}  \chi (t) \int_0^t \chi (t') \sum_{n\in E_{2M}}e_{n} \jb{ \mathcal{M} ( z_{2M} ,  w_{N_2}, \cdots, w_{N_{2k+1}}) (t'), e_n}  dt'
\end{align*}
Consider the contribution
\begin{align}
\begin{split}
&  - (k+1)i  ,  \ind_{\{N_j \ll M^{\be},\, j \neq 1\}}  \chi (t) \int_0^t \chi (t') \sum_{n\in E_{2M}}e_{n} \jb{ \mathcal{M} ( \{z_{2M}\}_{n} ,  w_{N_2}, \cdots, w_{N_{2k+1}}) (t'), e_n}  dt'.
\end{split} \label{J2eq}
\end{align}
This corresponds to the case when $n=n_1$. We establish the estimate for this piece \eqref{J2eq} directly in $Y^{0,b}$ to show this contribution to \eqref{z2M}. Namely, we do not need to use the interpolation argument as in \eqref{Ab1}-\eqref{Ab'}.  Indeed, we recognise this contribution to be
\begin{align*}
 \mathcal{P}_{2M, N_2, \ldots, N_{2k+1}}[z_{2M}] %\label{J2P}
\end{align*}
and thus by \eqref{PwZbb}, we have 
\begin{align*}
\|\mathcal{P}_{2M, N_2, \ldots, N_{2k+1}}[z_{2M}]\|_{Y^{0,b}} \les N_{(2)}^{-\ta_0} \| z_{2M}\|_{Y^{0,b}} \les N_{(2)}^{-\ta_0} (2M)^{-1+\g}.
\end{align*}
The negative power of $N_{(2)}$ allows us to sum over the dyadic scales $N_2,\ldots, N_{2k+1}$ in \eqref{zeqA}.

Therefore, for \eqref{J20}, 
it remains to consider the contribution 
\begin{align}
 \mathcal{A}_{2M, \text{nr}}^{(2)}(t)= \chi (t) \int_0^t \chi (t') \sum_{n\in E_{2M}}e_{n} \jb{ \mathcal{M} ( z_{2M} -\{z_{2M}\}_n,  w_{N_2}, \cdots, w_{N_{2k+1}}) (t'), e_n}  dt' \label{J2neq}
\end{align}
under the assumption \eqref{J21}. We rewrite this with the tensor notation from \eqref{basetensor0} and \eqref{baseS} and use \eqref{duhamelform} to obtain
\begin{align}
\begin{split}
\mathcal{F}\{ \mathcal{A}_{2M, \text{nr}}^{(2)}\}(\tau,n) &=   \int_{\R^{2k+1}} \sum_{n_1,\cdots,n_{2 k+1}}  \mathcal K \bigg(\tau,\Phi+\sum_{j=1}^{2 k+1} \iota_j \tau_j \bigg)  \ind_{S_2} (n, n_1,n_2,\cdots,n_{2k+1})  \\
& \hphantom{XXXX} \times c(n,n_1,n_2,\ldots,n_{2k+1}) \cdot
\prod_{j=1}^{2 k+1} \widehat{w_{N_{j}}} (  \tau_j, n_j)^{\iota_i} \, d \tau_1 \cdots  d\tau_{2 k+1}
\end{split} \label{J2FT}
\end{align}
where
\begin{align*}
S_2& =  \{ n_j \in E_{N_j}\setminus E_{N_j/2} \,\, \text{and}\,\,  N_{j} \ll M^{\be} \,\, \forall \, j\in \{2,\ldots, 2k+1\}, \ld_n \le 2M= N_1, n\neq n_1\}.
\end{align*}
We decompose \eqref{J2FT} as
\begin{align*}
\mathcal{F}\{ \mathcal{A}_{2M, \text{nr}}^{(2)}\}(\tau,n) = \ft{\mathcal{A}}_{\text{nr},1}(\tau,n) + \ft{\mathcal{A}}_{\text{nr},2}(\tau,n),
\end{align*}
where 
\begin{align}
\begin{split}
\ft{\mathcal{A}}_{\text{nr},1}(\tau,n) &=  \int_{\R^{2k+1}}  \ind_{\{ |\tau|\ll M\}}   \prod_{j=1}^{2k+1} \ind_{\{ |\tau_j|\ll M\}}   \sum_{n_1,\cdots,n_{2 k+1}}  \mathcal K \bigg(\tau,\Phi+\sum_{j=1}^{2 k+1} \iota_j \tau_j \bigg)  \ind_{S_2}  \\
& \hphantom{XXXX} \times c(n,n_1,n_2,\ldots,n_{2k+1}) \cdot
\prod_{j=1}^{2 k+1} \widehat{w_{N_{j}}} (  \tau_j, n_j)^{\iota_i} \, d \tau_1 \cdots  d\tau_{2 k+1}
\end{split} \label{J2A1}
\end{align}
and $\ft{\mathcal{A}}_{\text{nr},2}(\tau,n): = \mathcal{F}\{ \mathcal{A}_{2M, \text{nr}}^{(2)}\}(\tau,n) - \ft{\mathcal{A}}_{\text{nr},1}(\tau,n)$.

We consider first the bound for $\ft{\mathcal{A}}_{\text{nr},2}(\tau,n)$. 
We set $m=[\Phi(\cj{n})]$, note that $|m|\les M^2$, and write
\begin{align*}
\ft{\mathcal{A}}_{\text{nr},2}(\tau,n) &= \sum_{|m|\leq M^2} \int_{\R^{2k+1}}  \bigg[1-\ind_{\{ |\tau|\ll M\}}   \prod_{j=1}^{2k+1} \ind_{\{ |\tau_j|\ll M\}} \bigg]   \sum_{n_1,\cdots,n_{2 k+1}}  \mathcal K \bigg(\tau,m+\Phi-[\Phi]+\sum_{j=1}^{2 k+1} \iota_j \tau_j \bigg)   \\
& \hphantom{XXXX} \times c(\cj{n}) \cdot
 {\mathrm T}^{\mathrm b,m,S_2}_{n n_1\cdots n_{2 k+1}}\prod_{j=1}^{2 k+1} \widehat{w_{N_{j}}} (  \tau_j, n_j)^{\iota_i} \, d \tau_1 \cdots  d\tau_{2 k+1}.
\end{align*}
It follows by Minkowski's inequality (to bring inside the $\l^2_n$-norm), \eqref{duhamleker}, the bound
\begin{align}
    \label{Msump2}
 \sup_{(\tau,\tau_1, \ldots, \tau_{2k+1})}   \sum_{|m|\les M^{2}} \bigg\langle m - \tau + \sum_{i=1}^{2 k+1} \iota_i \tau_i \bigg\rangle^{-1} \les\log (1 +  M),
\end{align}
and Cauchy-Schwarz, that 
\begin{align}
\begin{split}
&\| \ft{\mathcal{A}}_{\text{nr},2}(\tau,n)\|_{Y^{0,b'}}  \\
& \les \bigg\| \jb{\tau}^{b'-1} \int_{\R^{2k+1}}\bigg[1-\ind_{\{ |\tau|\ll M\}}   \prod_{j=1}^{2k+1} \ind_{\{ |\tau_j|\ll M\}} \bigg]   \sum_{|m|\les M^2} \frac{1}{\jb{\tau+m+\sum_{j=1}^{2k+1}\iota_j \tau_j}} \\
& \hphantom{XXX} \times \bigg\| \sum_{n_1,\ldots,n_{2k+1}}  c(\cj{n}) {\mathrm T}^{\mathrm b,m,S_2}_{n n_1\cdots n_{2 k+1}}\prod_{j=1}^{2 k+1} |\widehat{w_{N_{j}}} (  \tau_j, n_j)| \bigg\|_{\l^2_n} d\tau_1 \cdots d\tau_{2k+1} \bigg\|_{L^2_{\tau}} \\
& \les \log(1+M)\Big\| \jb{\tau}^{b'-1} \bigg[1-\ind_{\{ |\tau|\ll M\}}   \prod_{j=1}^{2k+1} \ind_{\{ |\tau_j|\ll M\}} \bigg]  \prod_{j=1}^{2k+1}\jb{\tau_j}^{-b} \Big\|_{L^{2}_{\tau \tau_1 \cdots \tau_{2k+1}}}   \\
& \hphantom{XXX} \times \sup_{|m|\les M^2}  \big\|  c(\cj{n}) {\mathrm T}^{\mathrm b,m,S_2}_{n n_1\cdots n_{2 k+1}}\big\|_{\l^2_{nn_1\cdots n_{2k+1}}}  \prod_{j=1}^{2k+1} \| \jb{\tau}^{b} \ft w_{N_j}(\tau,n)\|_{L^2_{\tau} \l^2_n}.
\end{split} \label{J2A2cs}
\end{align}
 Now, we have at least one Fourier time variable that is $\ges M$, so 
 \begin{align}
\bigg\| \jb{\tau}^{b'-1} \bigg[1-\ind_{\{ |\tau|\ll M\}}   \prod_{j=1}^{2k+1} \ind_{\{ |\tau_j|\ll M\}} \bigg]  \prod_{j=1}^{2k+1}\jb{\tau_j}^{-b} \bigg\|_{L^{2}_{\tau \tau_1 \cdots \tau_{2k+1}}} \les M^{-\min(b-\frac 12, \frac 12-b')}. \label{tauints}
\end{align}
It follows \eqref{TypeG}, \eqref{TypeCnorm}, \eqref{TypeD}, that amongst all types, we have $T^{-1}M$-certainly that
 \begin{align}
\prod_{j=1}^{2k+1} \| \jb{\tau}^{b} \ft w_{N_j}(\tau,n)\|_{L^2_{\tau} \l^2_n} \les (2M)^{-1+\g} \prod_{j=2}^{2k+1}N_{j}^{-\frac 12+}, \label{J2fns}
\end{align}
where we used that the frequency $N_1$ corresponds to a Type (D) term since $J=2$. Next, by Lemma~\ref{LEM:ev1} and \eqref{Tbs1}, we have
\begin{align}
\sup_{|m|\les M^2}  \big\|  c(\cj{n}) {\mathrm T}^{\mathrm b,m,S_2}_{n n_1\cdots n_{2 k+1}}\big\|_{\l^2_{nn_1\cdots n_{2k+1}}}  \les M^{\eps} N_{(2)}^{\frac 12} N_{(3)}^{\frac 12+\eps} \prod_{j=4}^{2k+1}N_{(j)} \les M^{\eps+2k \be}. \label{J2count}
\end{align}
Combining \eqref{tauints}, \eqref{J2fns}, \eqref{J2count}, \eqref{J21}, and \eqref{gamma}, we find
\begin{align*}
\| \ft{\mathcal{A}}_{\text{nr},2}(\tau,n)\|_{Y^{0,b'}}  \les (2M)^{-1+\g -(\frac 12-b')+3k\be+\eps} \ll (2M)^{-1+\frac 14 \g},
\end{align*}
for $\g \ll k^{-2}(\frac 12 -b')$.

We move onto the bound for \eqref{J2A1}. We let $\be_2>\be$ small and to be chosen later, and decompose $\ft{\mathcal{A}}_{\text{nr},1}(\tau,n)$ further according to whether $|n-n_1|\gg M^{\be_2}$, denote the ensuing function $\ft{\mathcal{A}}_{\text{nr},1,1}(\tau,n)$ by or $|n-n_1|\les M^{\be_2}$, denoted by $\ft{\mathcal{A}}_{\text{nr},1,2}(\tau,n)$. 
When  $|n-n_1|\les M^{\be_2}$, we note that since $n\neq n_1$, we have $|n-n_1| \geq 1$ and thus it follows from \eqref{ev} and \eqref{J21} that 
\begin{align*}
|\Phi(\cj{n})| \ges |n-n_1||n+n_1|  \ges M  . %\label{PhiJ2}
\end{align*}
Moreover, since $|\tau_j|, |\tau| \ll M$ for all $j=1,\ldots,2k+1$, we have
\begin{align*}
\jb{ \tau +\Phi +\sum_{j=1}^{2k+1}\iota_j \tau_j} \sim \jb{\Phi} \ges M.
\end{align*}
Then, using \eqref{duhamleker}, a Cauchy-Schwarz argument as in \eqref{J2A2cs}, Lemma~\ref{LEM:ev1}, and \eqref{J2fns}, we have
\begin{align*}
\| \ft{\mathcal{A}}_{\text{nr},1,2}(\tau,n)\|_{Y^{0,b'}} &\les  M^{-1}\big\| \ind_{\{ |n-n_1|\les M^{\be_2}\}}  c(\cj{n}) \ind_{S_2}\big\|_{\l^2_{nn_1\cdots n_{2k+1}}}  \prod_{j=1}^{2k+1} \|\jb{\tau}^b \ft w(\tau,n)\|_{L^2_{\tau}\l^2_n} \\
& \les  M^{-1}\cdot M^{-1+\g}\cdot M^{\frac 12+\frac{\be_2}{2}}\cdot M^{2k\be}   \\
& \ll M^{-1+\frac{1}{4}\g},
\end{align*}
for $\be_2>0$ sufficiently small.

Now we consider $\ft{\mathcal{A}}_{\text{nr},1,1}(\tau,n)$. Then, by the same argument as above we find
\begin{align*}
\| \ft{\mathcal{A}}_{\text{nr},1,1}(\tau,n)\|_{Y^{0,b'}} &\les  M^{-2+\g+2k\be}\big\| \ind_{\{ |n-n_1|\gg M^{\be_2}\}}  c(\cj{n}) \ind_{S_2}\big\|_{\l^2_{nn_1\cdots n_{2k+1}}}.
\end{align*}
As $\be_2>\be$, we can apply \eqref{cdecay}, to get
\begin{align*}
\| \ft{\mathcal{A}}_{\text{nr},1,1}(\tau,n)\|_{Y^{0,b'}} & \les M^{-2+\g+2k\be} 
\bigg( \sum_{ \substack{n,n_1 \les 2M \\ |n-n_1|\gg M^{\be_2}}}\sum_{\substack{ n_2,\ldots, n_{2k+1} \\ n_j \ll M^{\be}}}  |c(\cj{n})|^2  \bigg)^{\frac{1}{2}} \\
&\les M^{-2+\g+10k\be} 
\bigg( \sum_{ \substack{n,n_1 \les 2M \\ |n-n_1|\gg M^{\be_2}}} \frac{1}{\jb{n-n_{(1)}}^2}  \bigg)^{\frac{1}{2}}\\
&\les M^{-2+\g+10k\be} \cdot M^{\frac12 -\frac{\be_2}{2}}   \\ 
%& \les M^{-\frac 32 +\g +4k\be -\frac{3\be_2}{2}} \\
& \les M^{-1+\frac{1}{4}\g}.
\end{align*}
This completes the proof for the contribution \eqref{J2neq} and also that of the case $J=2$. 
\end{proof}

\subsection{$J\in \{3,6\}$ cases} 

This case requires a combination of all ideas from the previous subsections. 
Additionally, a meshing argument is needed when the highest frequency input is of Type (C). 
This technique, first introduced for the study of probabilistic dispersive equations in \cite{DNY2} and later used in \cite{DNY3,BDNY,LW24}, 
involves discretizing the $\tau$ variable to keep the probability of the exceptional set under control. 
For completeness, we formulate a new, explicit version of this argument in Subsection \ref{SUB:case1} for completeness.

\subsubsection{Preparations}

Due to the similarity between the cases $J = 3$ and $J = 6$, 
we will mainly focus on the case $J = 6$ in what follows, 
and indicate the necessary modifications for the case $J = 3$ when appropriate.

We first make some reductions for the $J=6$ case, where $n\sim M$.
In the following, we may assume that $N_{(2)} \ll M$, since otherwise, we may apply the same argument as in the case $J=1$ (Lemma~\ref{LEM:J1}). Moreover, we may assume that $N_{(1)}=N_1$ and that $N_{1} \ges M$. Otherwise, we either have $N_{(1)}=N_{\text{even}}$ or $N_{(1)}\ll M$.
If $N_{(1)}=N_{\text{even}}$,
 then we apply the argument as in Case $J=4$ (Lemma \ref{LEM:J1}).

 If $N_{(1)} \ll M$, we use mostly similar arguments as previous cases to handle this part:
 
 \begin{lemma}\label{LEM:J6nbig}
 Assume that $N_{(1)} \ll M$. Then, we have
 \begin{align}
\| \mathcal{A}_{2M}^{(6)}(w_{N_1}^{\dag}, w_{N_2}^{\dag}, \ldots, w^{\dag}_{N_{2k+1}})\|_{Y^{0,b}}  &\les (2M)^{-1+\frac{1}{4}\g} \label{J60}
\end{align}
for $\{w_{N_j}\}_{j=1}^{2k+1}$ of any combination of \textup{Types} \textup{(G), (C)} or \textup{(D)}.
 \end{lemma}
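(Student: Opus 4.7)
The argument mirrors the Fourier-analytic strategy used in Lemma \ref{LEM:J5}, since the configuration here is again intrinsically non-resonant. Indeed, because $\ld_n \sim M$ while every input obeys $\ld_{n_j} \leq N_{(1)} \ll M$, the phase function satisfies
\begin{align*}
|\Phi(\cj{n})| \geq \ld_n^2 - (2k+1) N_{(1)}^2 \geq \tfrac{1}{2} \ld_n^2 \sim M^2.
\end{align*}
Writing $\sigma_0 = \tau + \ld_n^2$ for the $X^{0,b'}$-modulation of a dualizing function and $\sigma_j = \tau_j + \ld_{n_j}^2$ for the $X^{0,b}$-modulation of each $S(t) w_{N_j}$, the Fourier-support relation combined with the above forces $\max_{0 \leq j \leq 2k+1} \jb{\sigma_j} \geq c M^2$.

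The plan is, as in \eqref{Ab1}--\eqref{Ab'}, to interpolate against the crude $Y^{0,1}$ bound and reduce the problem to a $Y^{0,b'}$ estimate. By duality, this amounts to controlling
\begin{align*}
\bigg|\iint_{\R\times \D} \P_N v \cdot \prod_{j=1}^{2k+1} (S(t) w_{N_j})^{\iota_j}\, dx\, dt\bigg|, \qquad \|v\|_{X^{0,b'}} \leq 1,
\end{align*}
with $N \sim 2M$. Following the template of Lemma \ref{LEM:J5}, I would dyadically decompose each factor in modulation, restricted by $K_{\max} := \max_j K_j \ges M^2$. In each block, the factor carrying $K_{\max}$ is placed into $L^2_{t,x}$---gaining $K_{\max}^{-b'}$ if the maximum lies on $v$, or $K_{\max}^{-b+\eps}$ on $S(t) w_{N_j}$ via \eqref{C_estimate2} (Type (C)) or Lemma \ref{LEM:Dstrich} (Type (D))---while the remaining factors are estimated in appropriate $L^q_t L^r_x$ Strichartz norms via \eqref{Dphys}--\eqref{Dphys2} and \eqref{C_estimate1}.

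In the worst subcase, all $w_{N_j}$ are of Type (C): if $S(t) w_{N_{j_\ast}}$ attains the maximum modulation, placing it in $L^2_{t,x}$ via \eqref{C_estimate2} contributes $K_{\max}^{-b+\eps} N_{j_\ast}^{-1/2}$, then $\P_N v$ is placed into $L^{4+}_{t,x}$, and the remaining Type (C) factors are bounded in suitable $L^{q}_{t,x}$ norms via \eqref{C_estimate1}, each contributing at worst $N_j^{O(\g)}$. Combining with $K_{\max} \ges M^2$ and $b > \tfrac12$ yields an overall bound $\les M^{-\frac12 - 2b + O(\eps) + O(k\g)} \ll (2M)^{-1 + \g/4}$ provided $\g$ and $\eps$ are chosen small enough relative to $b - \tfrac12$. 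The subcases where the maximum modulation falls on $v$ or on a Type (D) factor are easier and handled by the same scheme using \eqref{Dphys2} in place of \eqref{C_estimate2}.

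The principal technical point is the summation over the dyadic modulation scales $(K_0, K_1, \ldots, K_{2k+1})$, each ranging up to at least $CM^2$. This is achieved by extracting a flexible $K_j^{-\eps}$ decay in every variable from \eqref{C_estimate1} and \eqref{Dphys2} (trading a negligible fraction of the $b$-power), exactly as in Lemma \ref{LEM:J5}. Crucially, no probabilistic meshing argument is required here, since all input frequencies satisfy $N_j \ll M$ and are therefore strictly bounded away from the output scale, making the non-resonance deterministic rather than requiring random cancellation.
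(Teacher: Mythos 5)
Your setup is correct up to a point: the deterministic phase gain $|\Phi(\bar n)|\gtrsim M^2$, the modulation decomposition with $K_{\max}\gtrsim M^2$, and the treatment of the subcase where the maximum modulation sits on one of the inputs $w_{N_{j_\ast}}$ (placing it in $L^2_{t,x}$ and gaining $K_{\max}^{-b+}$ with $b>\tfrac12$) all match the paper. Your remark that no meshing argument is needed is also correct.

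However, there is a genuine gap in the subcase $K_{\max}=K_0$, i.e.\ when the maximum modulation lands on the dualizing function $v$. You assert this is ``easier and handled by the same scheme,'' but placing $\mathbf{P}_{\le 2M}\mathbf{Q}_{K_0}v$ in $L^2_{t,x}$ only gains a factor $K_0^{-b'}\lesssim M^{-2b'}$, and since $b'<\tfrac12$ this is $M^{-1+(1-2b')}$, which is strictly larger than the target $(2M)^{-1+\gamma/4}$ (recall $\gamma\ll\tfrac12-b'$). If all $N_j$ are $O(1)$, the Strichartz bounds on the inputs contribute no extra decay and the estimate fails outright. This is in fact the hardest subcase of the lemma. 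The paper closes it with two additional ideas: first, a secondary dichotomy $N_{(1)}\gtrsim M^{\beta_3}$ versus $N_{(1)}\ll M^{\beta_3}$ for a carefully chosen $\beta_3>2(1-2b')$, so that in the former regime the Strichartz gains $N_{(1)}^{-\frac12+}N_{(2)}^{-\frac12+}$ combine with $M^{-2b'}$ to close; and second, in the latter regime, the device $V:=(-\Delta)^{-1}v$ together with an integration by parts, moving $(-\Delta)$ onto the low-frequency product $\prod_j S(t)w_{N_j}$. Since $v$ lives at spatial frequency $\sim M$, this trades $M^{-2}$ against a loss of at most $N_{(1)}^2\ll M^{2\beta_3}$ from the derivatives hitting the low-frequency factors (controlled by \eqref{L2derivs} for Type (D) and \eqref{C_estimate3}--\eqref{C_estimate4} for Type (C)), which closes for $\beta_3<\tfrac12+b'$. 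Your proposal contains none of this, so the case $K_{\max}=K_0$ with small input frequencies is genuinely unaddressed.
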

 \begin{proof}
 As $N_{(1)} \ll M$, we have $n\sim M \gg n_1$ and thus $|\Phi(\cj{n})| \ges (2M)^{2}$. 
Similar to the proof of Lemma~\ref{LEM:J5}, by duality and dyadic decompositions, we reduce to estimating
\begin{align*}
\sum_{ \substack{K_0, \ldots ,K_{2k+1} \\ K_{\max} \geq CM^2}} I_{6}(\cj{N}, \cj{K}) :=\sum_{ \substack{K_0, \ldots ,K_{2k+1} \\ K_{\max} \geq CM^2}} 
\iint_{\R\times \mathbb{D}} \P_{ 2M} \Q_{K_0}v  \prod_{j=1}^{2k+1} (\Q_{K_j}S(t)w_{N_j})^{\iota_j} dxdt,
\end{align*}
for $v\in X^{0,b'}$ with $b'<\frac 12$.

If $K_{\max}= K_{j_{\ast}}$ for some $j_{\ast}\in \{1,\ldots, 2k+1\}$, then by H\"{o}lder's inequality, \eqref{TypeD}, Lemma~\ref{LEM:L4Strich},  \eqref{Dphys2}, Lemma~\ref{LEM:TypeC}, and \eqref{C_estimate2}, we have
\begin{align*}
|I_{6}(\cj{N}, \cj{K})| &\les \|\P_{ 2M} \Q_{K_0}v \|_{L^{4}_{t,x}} \|\Q_{K_{j_{\ast}}} S(t)w_{N_{j_{\ast}}} \|_{L^{2}_{t,x}} \prod_{ \substack{j=1 \\ j\neq j_{\ast}}  }^{2k+1} \|\Q_{K_j} S(t)w_{N_j}\|_{L^{8k}_{t,x}} \\
& \les M^{\eps}K_{\max}^{-\eps} K_{\max}^{-(b-\eps)} N_{j_{\ast}}^{-\frac 12+ } \prod_{ \substack{j=1 \\ j\neq j_{\ast}}  }^{2k+1} N_{j}^{\g} \\
& \les K_{\max}^{-\eps}(2M)^{-2b+5\eps+2k\g} \\
& \ll K_{\max}^{-\eps} (2M)^{-1+\frac{1}{4}\g},
\end{align*}
 since $\g \ll b-\frac 12$ and for any $\eps>0$ sufficiently small.
 
 Now we consider the case when $K_{\max}=K_0$. 
 Suppose first that $N_{(1)} \ges M^{\be_3}$ for some $0<\be_3<1$ to be chosen later. 
 Then, we have
\begin{align*}
&|I_{6}(\cj{N}, \cj{K})|  \\
&\les \|\P_{ 2M} \Q_{K_0}v \|_{L^{2}_{t,x}} \|\Q_{K_{j_{(1)}}} S(t)w_{N_{j_{(1)}}} \|_{L^{4}_{t,x}}  \|\Q_{K_{j_{(2)}}} S(t)w_{N_{j_{(2)}}} \|_{L^{4-}_{t,x}} \prod_{ \l=3  }^{2k+1} \|\Q_{K_{j_{(\l)}}} S(t)w_{N_{j_{(\l)}}}\|_{L^{8k}_{t,x}} \\
& \les K_{\max}^{-\eps} K_{\max}^{-(b'-\eps)}  N_{(1)}^{-\frac 12+}N_{(2)}^{-\frac 12+} \prod_{ j=3  }^{2k+1} N_{(j)}^{\g} \\
& \les K_{\max}^{-\eps}(2M)^{-2b'+4\eps - \frac{\be_3}{2}} \\
& \ll K_{\max}^{-\eps} (2M)^{-1+\frac{1}{4}\g},
\end{align*}
 provided that we choose $\be_3>2(1-2b')$ and $\eps>0$ sufficiently small. 
 
 Now we assume that $N_{(1)}\ll M^{\be_3}$. Define the function
 \begin{align*}
V :  =(-\Dl)^{-1} v = \sum_{n\in \N} \ft v(n) \ld_{n}^{-2} e_{n},
\end{align*}
so that $-\Dl V = v$. Moreover, since $K_{\max}=K_0 \ges M^2$, we have
\begin{align}
\| \P_{ 2M} \Q_{K_0}V\|_{L^{2}_{t,x}} \les K_{\max}^{-\eps} M^{-2b'-2+\eps} \label{dualV}
\end{align} 
for any $\eps>0$.
Then, by integration by parts, 
\begin{align}
&\iint_{\R\times \mathbb{D}} \P_{ 2M} \Q_{K_0}v  \prod_{j=1}^{2k+1} (\Q_{K_j}S(t)w_{N_j})^{\iota_j} dxdt \notag \\
& = \iint_{\R\times \mathbb{D}} \P_{ 2M} \Q_{K_0}V \cdot   (-\Dl) \bigg(  \prod_{j=1}^{2k+1} (\Q_{K_j}S(t)w_{N_j})^{\iota_j} \bigg) dxdt  \label{dualV1}
\end{align}
We now estimate \eqref{dualV1} by placing $\P_{ 2M} \Q_{K_0}V$ into $L^2_{t,x}$. 
For the other functions, by the product rule, we have essentially two cases: (i) $(-\Dl)$ falls completely on a single $S(t)w_{N_j}$, or (ii) we have one derivative (through $\nb$) on two different $S(t)w_{N_j}$ terms. 
In either case, we handle the Type (D) terms by using the Strichartz estimates (Lemma~\ref{LEM:Dstrich}) followed by \eqref{L2derivs} or \eqref{C_estimate3} and \eqref{C_estimate4} for any Type (C) or (G) terms, and \eqref{dualV}. 
We then obtain
\begin{align*}
|\eqref{dualV1}| & \les K_{\max}^{-\eps}  M^{-2b'-2+\eps}N_{(1)}^{2+2k\g} \ll M^{-1+\frac{1}{4}\g} M^{-1-2b'-\frac{1}{4}\g +\eps +2(1+k\g)\be_3} \ll M^{-1+\frac{1}{4}\g},
\end{align*}
as $k\g \ll 1$ so that we only need to have $\be_3 < \frac{1}{2}+b'$ which is not restrictive to our previous condition $2(1-2b')<\be_3$ since $b'<\frac 12$ but close to $\frac 12$, and we can take $\eps>0$ sufficiently small.
 \end{proof}

In view of these reductions, when $J=6$, we may assume that:
 \begin{align}
N_{(1)}=N_1 \sim M, \quad N_{(2)}\ll M, \quad n\neq n_1. \label{J35a}
\end{align}
 If $w_{N_1}$ is of Type (D), then we argue as in the Case $J=2$ (Lemma~\ref{LEM:J2}). Note that \eqref{J35a} is already satisfied when $J=3$.
 Therefore, for $J\in \{3,6\}$, we may assume that \eqref{J35a} holds and $w_{N_1}$ is of Type (C).

%In what follows, we will consider 
%$\mathscr A^{(J)}( w_1, \cdots, w_{2k+1})$ for $J = 1,2,3,4,5$.
To prove \eqref{Ab'},
we rewrite $\mathcal{A}_{2M}^{(J)}( w_{N_1}, \cdots, w_{N_{2k+1}})$ with the tensor notation from  \eqref{basetensor0} and \eqref{baseS}. By \eqref{eqn:zn2}, \eqref{duhamelform}, and introducing $m=[\Phi]$, 
we have
\begin{equation}
\label{formulaK}
\begin{split}
\F_{t,x} ( & \mathcal{A}_{2M}^{(J)} ( w_{N_1}, \cdots, w_{N_{2k+1}}) (\tau,n) \\
& = \sum_m  \int_{\R^{2k+1}}\sum_{n_1,\cdots,n_{2 k+1}}  \mathcal K \bigg(\tau,m +\Phi-[\Phi]+\sum_{j=1}^{2 k+1} \iota_j \tau_j \bigg)  {\mathrm T}^{\mathrm b, m,S_J}_{n n_1\cdots n_{2 k+1}}  \\
& \hphantom{XXXX} \times c(n,n_1,n_2,\ldots,n_{2k+1}) \cdot
\prod_{j=1}^{2 k+1} \widehat{w_{N_{j}}} (  \tau_j, n_j)^{\iota_i} \, d \tau_1 \cdots  d\tau_{2 k+1}
\end{split}
\end{equation} 
where $\{S_{J}\}$ are defined as the following subsets of $(n, n_1,\ldots, n_{2k+1})\in \N^{2k+2}$:
\begin{align*}
%S_1 & = \{  z_{n_j}\sim N_j \, \forall \, j\in \{1,\ldots, 2k+1\}, z_{n}\leq N, N_{(1)}=N_{(2)}=2M\},  \\
%S_2& =  \{ z_{n_j} \sim N_j  \,\, \text{and}\,\,  N_{j}<M \, \forall \, j\in \{2,\ldots, 2k+1\}, z_{n}\leq 2M= N_1\},\\
S_3& =  \{   n_j \in E_{N_j}\backslash E_{N_j/2}  \,\, \&\,\,  N_{j}<L_{M}, \, \forall \, j\in \{2,\ldots, 2k+1\}, M < \ld_{n_1}\le 2M, \ld_n \le 2M, n\neq n_1\}, \\
%S_4 & =  \{  z_{n_j} \sim N_j  \,\, \text{and}\,\,  N_{j}<M \, \forall \, j\in \{1,\ldots, 2k+1\}\setminus \{2\}, z_{n_2}\sim 2M, z_n<2M\}, \\
S_6 & =  \{  n_j \in E_{N_j}\backslash E_{N_j/2}  \,\, \&\,\,  N_{j}\le M, \, \forall \, j\in \{1,\ldots, 2k+1\}, M < \ld_{n}\le 2M, N_{\text{even}}=N_{2}, \,\, N_{\text{odd}}=N_{1}\},
\end{align*}
and where we used the notation \eqref{baseS}, $c(\bar n)$ is given in \eqref{c_n}, and $\Phi$ is as in \eqref{phase}. As $N_j \les 2M$ for all $j\in \{1,\ldots, 2k+1\}$ on the support of each $\mathcal{A}_{2M}^{(J)}$ and $N=2M$, we have $|m|\les M^{2}$.

We note that $\Phi$ is generally not an integer and thus it is necessary to keep track of the dependency of the kernel $\mathcal{K}$ on $(n,n_1,n_2,\ldots, n_{2k+1})$ through the difference $\Phi-[\Phi]$. 
In view of \eqref{J35a}, we have $|m|\les M^2$.

In the following, we need to use meshing arguments, as in \cite[Proof of Lemma 4.2]{DNY2}, whose goal is to ensure that all exceptional $T^{-1}M$-certain sets that need to be removed \textit{do not} depend on the time frequency variables $\{\tau_j\}$. 
 We define the large box in the time frequency variables \begin{align}
Q : = \big\{ (\tau, \tau_1,\ldots, \tau_{2k+1}) \in \R^{2k+2} \,: \max_{j=1,\ldots, 2k+1}( \jb{\tau_j},\jb{\tau}) \ll K_M\big\}, \label{Qbox}
\end{align}
where $K_M:= M^{\ta^{-2}k^3}$ and $\ta=\max(2b-1, 1-2b')$.
We split the large box into smaller disjoint boxes $\{Q_{\vec \l}\}_{ \vec \l}$ of width $K_M^{-1}$, where 
\[
\vec \l = (\l, \l_1, \cdots, \l_{2k+1}) \in (\Z)^{2k+1}. 
\]
We write 
$$Q_{\vec \l}=  \pi_0 Q_{\vec \l} \times \pi_{0}^{\perp} Q_{\vec \l}= \pi_0 Q_{\vec \l} \times \pi_{1}Q_{\vec \l} \times \cdots \times  \pi_{2k+1}Q_{\vec \l} = Q_\l \times \prod_{j=1}^{2k+1} Q_{\l_j},$$
where $\pi_0$ is the projection onto the $\tau$ variable with $\tau \in Q_\l$ and $\pi_j$ is the projection onto the $\tau_j$ variable with $\tau_j \in Q_{\l_j}$, $j=1,\ldots, 2k+1$. 
We decompose the operator $\mathcal{A}^{(J)}_{2M}$ according to whether the integration is restricted to $Q$ or its complement:
\begin{align*}
\mathcal{A}^{(J)}_{2M}=\mathcal{A}^{(J)}_{2M, Q}+\mathcal{A}^{(J)}_{2M,Q^{c}}
\end{align*}
Finally, it remains to consider the contribution from $\mathcal{A}^{(J)}_{2M,Q^{c}}$ with any combination of inputs of Type (C) or (D). Here we may gain from the difference $b-b'>0$. We first note that for any $w_{j}$  of Type (C) or Type (G),  Lemma~\ref{LEM:growth} implies 
\begin{align*}
\| \jb{\tau}^{b} \ft w_{j}(\tau, n)\|_{L^{2}_{\tau}\l^{2}_{n}} \les T^{-\frac{\eps}{2}} N_{j}^{-\frac{1}{2}} (2M)^{\eps}
% \label{crudewLj}
\end{align*}
outside of a set of measure $e^{-(T^{-1}(2M)^{2})^{\theta}}\sim e^{-(T^{-1}M^{2})^{\theta
}}$. Thus, by Cauchy-Schwarz, \eqref{duhamleker}, Lemma~\ref{LEM:ev1}, \eqref{TypeD}, and \eqref{J35a},
\begin{align*}
\|&\mathcal{A}^{(J)}_{2M,Q^{c}}( w_{N_1},\ldots, w_{N_{2k+1}})\|_{Y^{0,b'}(Q^{c}\times \mathbb{D})} \\
& \les  M^{O(k)} \bigg( \int_{Q^{c}} \jb{\tau}^{-2(1-b')} \prod_{j=1}^{2k+1}\jb{\tau_j}^{-2b}d\tau_{1 \cdots 2k+1}\bigg)^{\frac 12} \prod_{j=1}^{2k+1} \| \jb{\tau}^{b} \ft w_{N_j}(\tau,n)\|_{L^{2}_{\tau}\l^2_n} \\
& \les M^{O(k)} K_{M}^{-\ta} \ll (2M)^{-1}.
\end{align*}

We now focus on the contribution from $\mathcal{A}^{(J)}_{2M, Q}$.
Restriction to the box $Q$ (as defined in \eqref{Qbox}) will now allow us to apply a meshing argument.
We consider the average $(\ft{h}_{\text{avg}})_{n_j}^{L_{j}, R_{j}}(\tau_j)$ which is defined by taking the average of
 $\ft{h}_{n_j}^{L_{j}, R_{j}}(\tau_j)$ over $\pi_{j}Q_{\vec \l} = O_{\l_j} \ni \tau_j$; that is, we define 
 \begin{align}\label{havg}
(\ft{h}_{\text{avg}})_{n_j}^{L_{j}, R_{j}}(\tau_j) :  =\frac{1}{|Q_{\l_j}|} \int_{Q_{\l_j}}\ft{h}_{n_j}^{L_{j}, R_{j}}(\tau) d\tau
\end{align}
for $\tau_j \in Q_{\l_j}$.
The (usual) Poincar\'e inequality implies
 \begin{align}
\big\| \ft h^{L_{j},R_{j}}_{n_j}(\tau_j) - (\ft{h}_{\text{avg}})_{n_j}^{L_{j}, R_{j}}(\tau_j) \big\|_{L^{2}(Q_{\l_j})} \les |Q_{\l_j}| \Big\| \dd_{\tau_j} \ft h^{L_{j},R_{j}}_{n_j}(\tau_j) \Big\|_{L^{2}(Q_{\l_j})},
\label{Poincare}
\end{align}
where the right hand side is controlled on $\textsf{Local}(M)$ in view of Plancherel, that $h_{n_j}^{L_j, R_j}$ is compactly supported in time, and \eqref{TypeCnorm}. The large gain from the factor $ |Q_{\l_j}| \sim K_M^{-1}$ 
allows us to then replace all occurrences of $\ft{h}_{n_j}^{L_{j}, R_{j}}(\tau_j)$ by their averaged versions $(\ft{h}_{\text{avg}})_{n_j}^{L_{j}, R_{j}}(\tau_j)$ which are constant over $Q_{\l_j}$. 
Note that in the following, we need to carefully deal with the summation over $\vec \l \in (\Z)^{2k+2}$ in order to not incur a loss in the number of choices of $\vec \l$.

Similarly, for fixed $m\in \R$ and $(n,n_1,\ldots, n_{2k+1})\in \N^{2k+2}$, we define an averaged version of the weighted kernel
\begin{align*}
\wt{\mathcal{K}}\bigg( \tau , m+ \Phi-[ \Phi] + \sum_{j=1}^{2k+1}\iota_j \tau_j \bigg):=\jb{\tau} \mathcal{K}\bigg( \tau , m+ \Phi-[ \Phi] + \sum_{j=1}^{2k+1}\iota_j \tau_j \bigg)
\end{align*}
For $(\tau, \tau_1, \ldots, \tau_{2k+1})\in Q_{\vec \l}$, we define 
\begin{align}
\wt{\mathcal{K}}_{\vec \l}(m,\cj{n}) : = \frac{1}{|Q_{\vec \l}|} \int_{Q_{\vec \l}}  \wt{\mathcal{K}}\bigg( \tau , m+ \Phi-[ \Phi] + \sum_{j=1}^{2k+1}\iota_j s_j \bigg) ds ds_{1, \ldots, 2k+1}. \label{avgker}
\end{align}
As in \eqref{Poincare} and using Lemma~\ref{PROP:duhamel} (the derivative estimates), in what follows, we may replace the weighted kernel $\wt{\mathcal{K}}$ by its averaged version $\wt{\mathcal{K}}_{\vec \l}$. In particular, since $\Phi-[\Phi]=O(1)$, Lemma~\ref{PROP:duhamel} implies 
\begin{align}
|\wt{\mathcal{K}}_{\vec\l}(m,\cj{n})| \les \frac{1}{|Q_{\vec\l}|} \int_{Q_{\vec\l}} \frac{1}{\jb{ m + \sum_{j=1}^{2k+1}\iota_j s_j   }} ds ds_{1, \ldots, 2k+1}. \label{Kavgbd}
\end{align}
Note that the right hand side of \eqref{Kavgbd} only depends on $m$ (and $\vec\l$) and no longer on $(n,n_1,\ldots, n_{2k+1})$. This step, which does not appear in \cite{DNY2}, is necessary in our setting as $\Phi$ is, in general, non-integer and thus the kernel still depends on the spatial frequencies. 

We note that, in the following, while some of the exceptional sets we remove depend upon $\vec\l$, there are only $O(M^{k^3})$ many values for $\vec\l$.

By \eqref{formulaK}, Minkowski's inequality, and Cauchy-Schwarz,
we have
\begin{align}
\begin{split}
&\| \mathcal{A}^{(J)}_{2M}( w_{N_1},\ldots, w_{N_{2k+1}})\|_{Y^{0,b'}(Q\times \mathbb{D})}^{2} \\
& =\sum_{\vec\l \in \Z^{2k+2}} \| \mathcal{A}^{(J)}_{2M}( w_{N_1},\ldots, w_{N_{2k+1}})\|_{Y^{0,b'}(Q_{\l}\times \mathbb{D})}^{2}\\
& \les \sum_{\vec\l \in \Z^{2k+2}} \int_{Q_{\l}} \jb{\tau}^{2(b'-1)} \bigg\|  \sum_{|m|\les M^2} \int_{Q_{\l_1}} \cdots \int_{ Q_{\l_{2k+1}}} \sum_{n_1,\ldots,n_{2k+1}} \wt{\mathcal{K}}_{\vec \l}(m,\cj{n}) c(\cj{n})  \\
&\hphantom{XXXXXXXXXXXXXXXX}\times {\mathrm T}^{\mathrm b, m,S_{J}}_{nn_1\cdots n_{2 k+1}} \prod_{j=1}^{2k+1} \ft {w_{N_j}}(\tau_j, n_j)^{\iota_j} d\tau_{1 \cdots 2k+1} \bigg\|_{\l^{2}_{n}}^{2} d\tau.
\end{split}
\label{redu1}
\end{align}
In the following,
we write
\begin{align}
\label{Xnt}
\begin{split}
\mathcal X_n^{(J)}  & = \sum_{|m|\les M^{2}} \int_{\pi_0^{\perp} Q_{\vec \l}}   
\sum_{n_1,\cdots,n_{2 k+1}}  \wt{\mathcal{K}}_{\vec \l}(m,\cj{n})
{\mathrm T}^{\mathrm b, m,S_{J}}_{nn_1\cdots n_{2 k+1}}  \cdot c(\bar n) \\
&\hphantom{XXXXXXXXXXXXXXX} \times \prod_{j=1}^{2 k+1}   \widehat{w_{N_j}}^{\iota_j} ( \tau_j, n_j) d\tau_{1 \cdots 2k+1}
 \end{split}
\end{align}

\noi
where $\pi_0^{\perp} Q_{\vec \l} = \prod_{j=1}^{2k+2} Q_{\l_j}$.
We note that \eqref{Xnt} is the core of \eqref{redu1}.
We prove \eqref{Ab'} case by case, namely each of the $2^{2 k+1}$ many possible cases for the types of $(w_{N_1}, w_{N_2}, w_{N_3}, \cdots, w_{N_{2 k+1}})$ being 
(1): (C, C, C, $\cdots$, C),
(2): (D, C, C, $\cdots$, C),
(3): (D, D, C, $\cdots$, C),
$\cdots$,
and ($2^{2 k}$): (D, D, D, $\cdots$, D).
% The meshing argument ensures that the final exceptional sets removed will not depend on the variables $(\tau,\tau_1,\ldots,\tau_{2k+1})$. 

\subsubsection{Case 1: All of Type (D)}
\label{SUB:case1}

As we are on $Q_{\vec \l}$, we define
\begin{align*}
A_{0}^{(J)}(\cj{n}) : = \sum_{\ld_{n_1}\sim 2M} \wt{\mathcal{K}}_{\vec \l}(m,\cj{n}){\mathrm T}^{\mathrm b, m,S_{J}}_{nn_1\cdots n_{2 k+1}} c(n,n_1,n_2,\ldots,n_{2k+1})  (\ft h_{\text{avg}})_{n_1}^{2M,L_{1}}\frac{g_{n_1}}{\ld_{n_1}} ,
\end{align*}
where $\wt{\mathcal{K}}_{\vec \l}(m,\cj{n})$ is given in \eqref{avgker}, so that
\begin{align*}
\mathcal{X}_{n}^{J} = \sum_{|m|\les M^2} \int_{\pi_0^{\perp} Q_{\vec \l}} \sum_{n_2, \ldots, n_{2k+1}} \prod_{j=2}^{2k+1} \ft{w}_{N_j}^{\iota_j}(\tau_j,n_j) A_{0}^{(J)} (\cj{n}) d\tau_{1 \cdots 2k+1}
\end{align*}
Then, by Minkowski's inequality and Cauchy-Schwarz,
\begin{align*}
\eqref{redu1} & \les \sup_{\l} \sum_{\l_1,\cdots,\l_{2k+1}}  \bigg( \sum_{|m|\les M^{2}} \int_{\pi_{0}^{\perp}Q_{\vec \l}}  \bigg\|  \sum_{n_2, \ldots, n_{2k+1}} \prod_{j=2}^{2k+1}  \ft{w}_{N_j}^{\iota_j}(\tau_j,n_j) A_{0}^{(J)} (\cj{n})\bigg\|_{\l^{2}_{n}}d\tau_{1 \cdots 2k+1} \bigg)^{2} \\
& \les  \sup_{\l} \sum_{\l_j}  \bigg(\sum_{|m|\les M^{2}} \int_{\pi_{0}^{\perp}Q_{\vec \l}} \bigg( \prod_{j=2}^{2k+1} \| \ft{w}_{N_j}^{\iota_j}(\tau_j,\cdot )\|_{\l^2_n}  \bigg)   \| A_{0}^{J}(\cj{n}) \|_{\l^2_{n,n_2,\ldots, n_{2k+1}}}   d\tau_{1 \cdots 2k+1}  \bigg)^2  \\
& \les \sup_{\l} \sum_{\l_j} \bigg( \prod_{j=2}^{2k+1} \| \jb{\tau_j}^{b} \wt w_{N_j}(\tau_j,n)\|_{\l^2(\N; L^2(Q_{\l_j}))}^2 \bigg) \bigg( \sum_{|m|\les M^2} \int_{\pi_1 Q_{\l}} \| A_{0}^{J}(\cj{n}) \|_{\l^2_{n,n_2,\ldots, n_{2k+1}}} d\tau_1\bigg)^{2} \\
& \les  \bigg( \prod_{j=2}^{2k+1} N_{j}^{-1+\g}\bigg)^{2} \sup_{\l} \sup_{\l_j, j \neq 1} \sum_{\l_1} \bigg( \sum_{|m|\les M^2} \int_{Q_{\l_1}} \| A_{0}^{J}(\cj{n}) \|_{\l^2_{n,n_2,\ldots, n_{2k+1}}} d\tau_1\bigg)^{2}.
\end{align*} 

Now, by Lemma~\ref{LEM:condchaos}, \eqref{havg}, and \eqref{Kavgbd}, we have $T^{-1}M$-certainly\footnote{Note that the exceptional set is of size $O\big(e^{-\left(T^{-1} M\right)^{2 \theta}}\big)$ for some $\theta \ll 1$. 
However, the exceptional set may depend on $\l_1$. 
As there are only $K_M^2 = M^{2 \theta^{-2} k^3}$ many such values of $\l_1$, the union of all these exceptional sets is still of size at most $O\big(e^{-\left(T^{-1} M\right)^{\theta}}\big)$. 
This is the main reason for the meshing argument, which discretizes the interval $\tau_1 \in [-K_M, K_M]$ into $\l_1 \in \Z \cap [-K_M^2,K_M^2]$.} 
\begin{align*}
\int_{Q_{\l_1}} & \| A_{0}^{J}(\cj{n}) \|_{\l^2_{n,n_2,\ldots, n_{2k+1}}} d\tau_1 = |Q_{\l_1}| \| A_{0}^{J}(\cj{n}) \|_{\l^2_{n,n_2,\ldots, n_{2k+1}}} \\
 &\les M^{\ta} |Q_{\l_1}| \bigg( \sum_{n,n_1, \ldots, n_{2k+1}} |\wt{\mathcal{K}}_{\vec \l}(m,\cj{n})|^2 {\mathrm T}^{\mathrm b, m,S_{J}}_{nn_1\cdots n_{2 k+1}} |c(\bar n)|^2 |(\ft h_{\text{avg}})_{n_1}^{2M,L_{1}}|^2 \ld_{n_1}^{-2} \bigg)^{\frac12}\\
 & \les M^{\ta-1} \| c(\cj{n})\|_{\l^{\infty}_{\cj{n}}} \bigg( \sum_{\cj{n}} |\wt{\mathcal{K}}_{\vec \l}(m,\cj{n})|^2 {\mathrm T}^{\mathrm b, m,S_{J}}_{nn_1\cdots n_{2 k+1}}  \bigg| \int_{Q_{\l_1}} \ft h^{2M,L_{1}}_{n_1} (\tau_1) d\tau_1 \bigg|^2 \bigg)^{\frac12} \\
 & \les M^{\ta-1} \| c(\cj{n})\|_{\l^{\infty}_{\cj{n}}} \| \jb{\tau_1}^{-b} \|_{L^2_{\tau_1} (Q_{\l_1})} \bigg( \sum_{\cj{n}} |\wt{\mathcal{K}}_{\vec \l}(m,\cj{n})|^2 {\mathrm T}^{\mathrm b, m,S_{J}}_{nn_1\cdots n_{2 k+1}}  \| \jb{\tau_1}^b \ft h^{2M,L_{1}}_{n_1} \|_{L^2_{\tau_1} (Q_{\l_1})}^2 \bigg)^{\frac12} \\
& \les M^{\ta-1} \| c(\cj{n})\|_{\l^{\infty}_{\cj{n}}} \| \jb{\tau_1}^{-b} \|_{L^2_{\tau_1} (Q_{\l_1})} \bigg( \sum_{\cj{n}} {\mathrm T}^{\mathrm b, m,S_{J}}_{nn_1\cdots n_{2 k+1}}  \| \jb{\tau_1}^b \ft h^{2M,L_{1}}_{n_1} \|_{L^2_{\tau_1} (Q_{\l_1})}^2 \bigg)^{\frac12}\\
 & \hphantom{XX} \times 
 \frac{1}{|Q_{\vec\l}|} \int_{Q_{\vec\l}} \frac{1}{\jb{ m + \sum_{j=1}^{2k+1}\iota_j s_j   }} ds ds_{1, \ldots, 2k+1},
\end{align*}
where we used the fact that $(\ft h_{\text{avg}})_{n_1}^{2M,L_{1}}$ and thus $A_{0}^{J}(\cj{n})$ are constant on the interval $Q_{\l_1}$.
We note that 
\[
\sum_{|m| \les M^2} \frac{1}{|Q_{\vec\l}|} \int_{Q_{\vec\l}} \frac{1}{\jb{ m + \sum_{j=1}^{2k+1}\iota_j s_j   }} ds ds_{1, \ldots, 2k+1} \les \log (1+ M),
\]
holds uniformly in $\vec \l$.
Therefore, by Cauchy-Schwarz, \eqref{Kavgbd}, Lemma~\ref{LEM:ev1}, and \eqref{Tbs1},
\begin{align*}
\sum_{|m|\les M^2}&  \sum_{\l_1} \int_{Q_{\l_1}} \| A_{0}^{J}(\cj{n}) \|_{\l^2_{n,n_2,\ldots, n_{2k+1}}} d\tau_1 \\
& \les  M^{-1+2\ta}  \| c(\cj{n})\|_{\l^{\infty}_{\cj{n}}} \sum_{\l_1} \| \jb{\tau}^{-b} \|_{L^2_\tau (Q_{\l_1})}    
\bigg(\sum_{\cj{n}} {\mathrm T}^{\mathrm b, m,S_{J}}_{nn_1\cdots n_{2 k+1}}  \| \jb{\tau_1}^b \ft h^{2M,L_{1}}_{n_1} \|_{L^2(Q_{\l_1})}^2  \bigg)^{\frac 12} \\
% &\les M^{-1+\ta}  \| c(\cj{n})\|_{\l^{\infty}_{\cj{n}}} \sum_{\l_1} |Q_{\l_1}|^{\frac12} \sum_{|m|\les M^2} \bigg( \sum_{\cj{n}} |\wt{\mathcal{K}}_{\vec \l}(m,\cj{n})|^2 {\mathrm T}^{\mathrm b, m,S_{J}}_{nn_1\cdots n_{2 k+1}} \|\ft h^{2M,L_{1}}_{n_1} \|_{L^2(Q_{\l_1})}^2  \bigg)^{\frac 12}   \\
&\les M^{-1+2\ta}  \| c(\cj{n})\|_{\l^{\infty}_{\cj{n}}} \| \jb{\tau}^{-b} \|_{L^2_\tau (\R)} 
\bigg( \sum_{\l_1} \sum_{\cj{n}} 
{\mathrm T}^{\mathrm b, m,S_{J}}_{nn_1\cdots n_{2 k+1}} \| \jb{\tau_1}^b \ft h^{2M,L_{1}}_{n_1} \|_{L^2(Q_{\l_1})}^2  \bigg)^{\frac 12} \\
&\les M^{-1+2\ta}  \| c(\cj{n})\|_{\l^{\infty}_{\cj{n}}} 
\bigg( \sum_{\cj{n}}  {\mathrm T}^{\mathrm b, m,S_{J}}_{nn_1\cdots n_{2 k+1}} \| \jb{\tau_1}^b \ft h^{2M,L_{1}}_{n_1} \|_{L^2(\R)}^2  \bigg)^{\frac 12}   \\
& \les M^{-1+2\ta}  \| c(\cj{n})\|_{\l^{\infty}_{\cj{n}}} \sup_{n_1 \in E_{2M} \backslash E_{M}} \| \jb{\tau_1}^b \ft h^{2M,L_{1}}_{n_1} \|_{L^2(\R)} 
\| {\mathrm T}^{\mathrm b, m,S_{J}}_{nn_1\cdots n_{2 k+1}}\|_{\l^{2}_{nn_1n_2 \ldots n_{2k+1}}} \\
  & \les M^{-1+2\ta+3\eps } N_{(2)}^{\frac 12} N_{(3)}^{\frac 12+\eps} \prod_{j=4}^{2k+1} N_{(j)}^{1-\eps},
\end{align*}
where we used \eqref{TypeCnorm} in the last step.
Here $N_{(j)}$ is the $j$-th largest frequency among $N_j$ for $j = 1, \cdots, 2k+1$.
Therefore, 
\begin{align*}
\eqref{redu1}^{\frac 12} &\les M^{-1+2\ta+3\eps} N_{(2)}^{-\frac 12+\g} N_{(3)}^{-\frac 12+\g} \prod_{j=4}^{2k+1} N_{(j)}^{\g-\eps} \\
& \les M^{-1+2\ta+3\eps}N_{(2)}^{-\frac 12+\g} N_{(3)}^{-\frac 12 +\g + (2k-3)(\g-\eps)} \\
& \les (2M)^{-1+\frac{1}{4}\g} N_{(2)}^{-\frac 14},
\end{align*}
where we used \eqref{Msump2}.
This completes the proof of \eqref{J60} for the case $(D,\ldots, D)$.

\subsubsection{Case 2: At least one Type (C) or Type (G) }

By relabelling $(n_2, \ldots, n_{2k+1})$, we may assume that there exists an integer $k_0\geq 2$ such for $1\leq j \leq k_0$, $w_{N_j}$ is of Type (C) or Type (G), and for $k_0+ 1 \leq j \leq 2k+1$, $w_{N_j}$ is of Type (D). We apply a similar argument as in Case 1. As in \eqref{redu1}, we only need to control $\mathcal{X}_{n}^{(J)}$ in \eqref{Xnt} in $\l^2_{n}$, uniformly in $\l$.
We write 
\begin{align*}
\mathcal{X}_{n}^{J} & = \sum_{|m|\les M^2} \int_{\pi_{0}^{\perp}Q_{\vec \l}} \sum_{n_{k_0+1}, \ldots, n_{2k+1}} \prod_{j=k_0+1}^{2k+1} \ft{w}_{N_j}^{\iota_j}(\tau_j,n_j)  \\
&\hphantom{XXXXXXXXXXXX} \times \sum_{n_{1} ,\ldots, n_{k_0}} \wt{\mathcal{K}}_{\vec \l}(m,\cj{n}){\mathrm T}^{\mathrm b, m,S_{J}}_{nn_1\cdots n_{2 k+1}}  \cdot c(\bar n) \prod_{j=1}^{k_0} (\ft h_{\text{avg}}^{N_{j}, L_{j}})_{n_j}^{\iota_j} \frac{g_{n_j}^{\iota_j}}{\ld_{n_j}} d\tau_{1\cdots 2k+1}. 
\end{align*}
Then, by Minkowski's inequality and Cauchy-Schwarz,  we have
\begin{align}
\begin{split}
\| \chi_{n}^{J}\|_{\l^2_{n}} 
& \leq \sum_{|m|\les M^2} \int_{\pi_{0}^{\perp}Q_{\vec \l}} d\tau_{1\cdots 2k+1} \prod_{j=k_0+1}^{2k+1} \| \ft w_{N_j}(\tau_j, n_j)\|_{\l^{2}_{n_j}}  \\
&\hphantom{XX} \times \bigg\|  \sum_{n_{1} ,\ldots, n_{k_0}} \wt{\mathcal{K}}_{\vec \l}(m,\cj{n}){\mathrm T}^{\mathrm b, m,S_{J}}_{nn_1\cdots n_{2 k+1}}  c(\bar n) \prod_{j=1}^{k_0} (\ft h_{\text{avg}}^{N_{j}, L_{j}})_{n_j}^{\iota_j} \frac{g_{n_j}^{\iota_j}}{\ld_{n_j}} \bigg\|_{\l^{2}_{n n_{k_0+1} \cdots n_{2k+1}}}  \\
& \les  \bigg( \prod_{j=k_0+1}^{2k+1}N_{j}^{-1+\g}\bigg) \sum_{|m|\les M^2} \int_{Q_{\l_1} \times \cdots Q_{\l_{k_0}}} d\tau_{1\cdots k_0}  \\
&\hphantom{XX} \bigg\|  \sum_{n_{1} ,\ldots, n_{k_0}} \wt{\mathcal{K}}_{\vec \l}(m,\cj{n}){\mathrm T}^{\mathrm b, m,S_{J}}_{nn_1\cdots n_{2 k+1}}  c(\bar n) \prod_{j=1}^{k_0} (\ft h_{\text{avg}}^{N_{j}, L_{j}})_{n_j}^{\iota_j} \frac{g_{n_j}^{\iota_j}}{\ld_{n_j}} \bigg\|_{\l^{2}_{n n_{k_0+1} \cdots n_{2k+1}}}. 
\end{split}
\label{zpair0}
\end{align}
Recall our standing assumption \eqref{J35a} which implies that $n_1 \gg n_{(2)}$, so that $n_1$ is an unpaired frequency.
By Lemma~\ref{LEM:condchaos}, we have $T^{-1}M$-certainly,
\begin{align*}
&\bigg|\sum_{n_{1} ,\ldots, n_{k_0}} \wt{\mathcal{K}}_{\vec \l}(m,\cj{n}){\mathrm T}^{\mathrm b, m,S_{J}}_{nn_1\cdots n_{2 k+1}}  c(\bar n) \prod_{j=1}^{k_0} (\ft h_{\text{avg}}^{N_{j}, L_{j}})_{n_j}^{\iota_j} \frac{g_{n_j}^{\iota_j}}{\ld_{n_j}} \bigg|^{2} \\
& \les  M^{-2+2\ta} \sum_{n_1 \sim 2M} |  (\ft h_{\text{avg}}^{N_{1}, L_{1}})_{n_1}|^{2} 
\bigg| \sum_{n_{2} ,\ldots, n_{k_0}} \wt{\mathcal{K}}_{\vec \l}(m,\cj{n}){\mathrm T}^{\mathrm b, m,S_{J}}_{nn_1\cdots n_{2 k+1}}  c(\bar n) \prod_{j=2}^{k_0} (\ft h_{\text{avg}}^{N_{j}, L_{j}})_{n_j}^{\iota_j} \frac{g_{n_j}^{\iota_j}}{\ld_{n_j}} \bigg|^{2}
\end{align*}

We assume first that there are no pairings in $\{n_2,\ldots, n_{k_0}\}$ for the quantity
\begin{align}
\sum_{n_{2} ,\ldots, n_{k_0}} \wt{\mathcal{K}}_{\vec \l}(m,\cj{n}){\mathrm T}^{\mathrm b, m,S_{J}}_{nn_1\cdots n_{2 k+1}}  c(\bar n) \prod_{j=2}^{k_0} (\ft h_{\text{avg}}^{N_{j}, L_{j}})_{n_j}^{\iota_j} \frac{g_{n_j}^{\iota_j}}{\ld_{n_j}}. \label{n2nk0} 
\end{align}
 Then, we have the following.

\begin{lemma}\label{LEM:nopair}
Assume that there is no pairing amongst $\{2,\ldots, k_0\}$ and $L_{\max}\gg 1$. Then, $T^{-1}M$-certainly, it holds that
\begin{align}
|\eqref{n2nk0}|^{2} \les M^{\ta} \sum_{n_2,\ldots, n_{k_0}}  |\wt{\mathcal{K}}_{\vec \l}(m,\cj{n}) |^{2}  \wt{{\mathrm T}}^{\mathrm b, m, S_{J}}_{nn_1 n_2 \cdots n_{2 k+1}} |c(\cj{n})|^2  \prod_{j=2}^{k_0} \frac{ | (\ft h_{\textup{avg}})^{L_j,R_j}_{n_j}|^2  }{\ld_{n_j}^{2}}.
\end{align}
\end{lemma}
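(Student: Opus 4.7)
The plan is to apply the conditional Wiener chaos estimate (Lemma~\ref{LEM:condchaos}) directly to the multilinear expression \eqref{n2nk0}. Viewed as a Gaussian polynomial of degree $k_0-1$ in the variables $\{g_{n_j}^{\iota_j}\}_{j=2}^{k_0}$ indexed by $E=\bigcup_{j=2}^{k_0}(E_{N_j}\setminus E_{N_j/2})$, its coefficients are products of the deterministic factors $\wt{\mathcal K}_{\vec \l}(m,\cj n)$, ${\mathrm T}^{\mathrm b,m,S_J}_{nn_1\cdots n_{2k+1}}$, and $c(\cj n)$, multiplied by the random quantities $(\ft h_{\textup{avg}})^{L_j,R_j}_{n_j}/\ld_{n_j}$. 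Each multiplier $h^{L_j,R_j}_{n_j}$ is $\mathcal B_{\le R_j}$-measurable with $R_j<L_j^{1-\kk}$. In view of the explicit unitary formula \eqref{HnNL}, any residual dependence on the high-frequency Gaussians $g_k$ with $k\in E$ enters only through $|g_k|^2$ after the spatial integration against $e_{n_j}^2$ and the pairings within $|u_{L_j}|^{2k}$. Hence the coefficients are $\mathcal C^+$-measurable for a sub-$\sigma$-algebra $\mathcal C$ independent of $\mathcal B$.

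With the no-pairing hypothesis on $\{n_2,\ldots,n_{k_0}\}$ supplying the structural condition required by Lemma~\ref{LEM:condchaos}, and taking $A=T^{-1}M\ge |E|$, the lemma produces $T^{-1}M$-certainly the pointwise bound
\begin{align*}
|\eqref{n2nk0}|^2 &\les (T^{-1}M)^{2\ta}\sum_{n_2,\ldots,n_{k_0}}|\wt{\mathcal K}_{\vec\l}(m,\cj n)|^2\,{\mathrm T}^{\mathrm b,m,S_J}_{nn_1\cdots n_{2k+1}}\,|c(\cj n)|^2\\
&\hphantom{XXXXXX}\times\prod_{j=2}^{k_0}\frac{|(\ft h_{\textup{avg}})^{L_j,R_j}_{n_j}|^2}{\ld_{n_j}^2}.
\end{align*}
The $(T^{-1}M)^{2\ta}$ prefactor can be absorbed into $M^\ta$ (after redefining $\ta$) by using the room afforded by the positive power of $T$ elsewhere in the estimate, cf.~Remark~\ref{RMK:4.4}. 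Since the base tensor ${\mathrm T}^{\mathrm b,m,S_J}$ is $\{0,1\}$-valued, it coincides with its square, which is what allows us to identify $\wt{{\mathrm T}}^{\mathrm b,m,S_J}$ with ${\mathrm T}^{\mathrm b,m,S_J}$ in the statement.

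The only non-routine step I expect is the verification of the $\mathcal C^+$-measurability of the coefficients when some measurability scale $R_j$ happens to exceed another frequency scale $N_{j'}$ within $\{N_{j}\}_{j=2}^{k_0}$, since then $h^{L_j,R_j}$ may \emph{a priori} depend on $g_{n_{j'}}$. I would handle this either by ordering the indices in ascending order of $N_{j}$ and applying Lemma~\ref{LEM:condchaos} iteratively along the resulting filtration, so that at each stage the remaining polynomial has coefficients that are $\mathcal C^+$-measurable with respect to the appropriate sub-$\sigma$-algebra, or alternatively by extracting the $|g_k|^2$-dependence of $h^{L_j,R_j}$ directly from the Wick expansion of $|u_{L_j}|^{2k}$, which shows that the phase-dependent contributions vanish after the spatial pairing forced by $e_{n_j}^2$. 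The hypothesis $L_{\max}\gg 1$ is then sufficient to keep the probability of the exceptional set bounded by $O(e^{-(T^{-1}M)^\ta})$, as required.
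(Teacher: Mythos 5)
Your central claim---that the explicit unitary formula \eqref{HnNL} forces the dependence of $h_{n_j}^{L_j,R_j}$ on the high-frequency Gaussians to enter only through the moduli $|g_k|^2$, so that the coefficients are $\mathcal C^+$-measurable---is false, and this breaks both your main argument and your workaround (b). The exponent in \eqref{HnNL} involves $|u_{R_j}|^{2k}$, and $u_{R_j}$ is a nonlinear (and, even at linear order, multi-mode) functional of the Gaussians $\{g_m\}_{\lambda_m\le R_j}$; expanding $|u_{R_j}|^{2k}$ produces cross terms such as $g_m\overline{g_{m'}}$ with $m\neq m'$, which are not functions of the moduli. The function is invariant only under the \emph{global} phase rotation $g_m\mapsto e^{i\theta}g_m$, which is far weaker than what $\mathcal C^+$-measurability requires. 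Consequently, whenever $R_j\ge N_{j'}/2$ for some $j'\in\{2,\dots,k_0\}$ with $j'\ne j$, the coefficient $h_{n_j}^{L_j,R_j}$ has genuine (not moduli-only) dependence on $g_{n_{j'}}$, and Lemma~\ref{LEM:condchaos} cannot be applied in one shot to the full index set $E=\bigcup_{j=2}^{k_0}(E_{N_j}\setminus E_{N_j/2})$.

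Your workaround (a) is in the right spirit, but as stated it does not close the gap. Simply ordering by $N_j$ and iterating fails for two reasons: first, scales can coincide dyadically, so there is no single ``highest'' frequency to peel off; second, and more importantly, an $h$-factor attached to a \emph{higher} scale ($R_j<L_j^{1-\kappa}$) can still depend on Gaussians at a \emph{lower} scale, so after one application of the chaos estimate you are left with coefficients (the leftover $h$-factors) that are not measurable below the next scale you want to resolve. The paper handles this by a \emph{gap argument}: order $L_2\ge\dots\ge L_{2k+1}$, set $a=\min\{j:L_j>2^{10}L_{j+1}\}$, observe $L_2\sim L_a$ (with a $k$-dependent constant), and use $L_2\gg1$ to guarantee $L_2^{1-\kappa}\le 2^{-10}L_a$. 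This makes \emph{all} the $h$-factors, and also the Gaussians $g_{n_j}$ for $j\ge a+1$, measurable below $2^{-10}L_a$, so Lemma~\ref{LEM:condchaos} can be applied to the block $\{n_2,\dots,n_a\}$ in one go, and one then iterates on $\{n_{a+1},\dots,n_{k_0}\}$. This grouping across a genuine quantitative gap is the ingredient your proposal is missing; without it the iterated application of the conditional Wiener chaos estimate does not go through.
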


\begin{proof}
The exceptional sets removed here depend on both $m$ and $\vec \l$, 
but since they have complemental probability at most $e^{-(T^{-1}M)^{\ta}}$ and there are only $O(M^2)$ many possibilities for $m$ and $O(K_{M})$ many for $\vec \l$, we can assume that the following is true for all $(m, \vec \l)$.
We use an argument similar to that in \cite[Lemma 4.2]{DNY2}. We may assume that $L_{2} \geq L_3 \geq \cdots \geq L_{2k+1}$. Define 
\begin{align*}
a= \min\{ j\in \{2,\ldots, 2k+1\} \, : \, L_{j}>2^{10}L_{j+1}\}.
\end{align*}
It follows that $L_{2} \sim L_{a}$ and the implicit constant only depends upon $k_0$. 
We then have that 
$((\ft h_{\textup{avg}})^{L_j,R_j}_{n_j})^{\iota_j} $ for $2\leq j\leq k_0$ are $\mathcal{B}_{\leq L_{2}^{1-\kk}}$ measurable and $L_{2}^{1-\kk} \leq 2^{-10} L_{a}$ for $L_2 \gg 1$, 
and $g_{n_j}^{\iota_j}$ for $a+1\leq j\leq k_0$ are $\mathcal{B}_{\leq L_{a+1}}$ measurable with $L_{a+1}\leq 2^{-10}L_{a}$. By the no-pairing assumption, there is no pairing amongst $\{n_2,\ldots, n_{a}\}$ and amongst $\{n_{a+1},\ldots, n_{k_0}\}$. Thus, we write
\begin{align*}
\eqref{n2nk0} & = \sum_{n_2,\ldots, n_{a}} b_{n_2,\ldots, n_{a}}(\o) \prod_{j=2}^{a}\frac{g_{n_j}^{\iota_j}}{\ld_{n_j}}, 
\end{align*}
where
\begin{align*}
 b_{n_2,\ldots, n_{a}}(\o)  : =&\bigg( \prod_{j=2}^{a}  ((\ft h_{\text{avg}})^{L_j,R_j}_{n_j})^{\iota_j}  \bigg)  \sum_{n_{a+1},\ldots, n_{k_0}}\wt{\mathcal{K}}_{\vec \l}(m,\cj{n}) \wt{{\mathrm T}}^{\mathrm b, m,S_J}_{n_2 \cdots n_{2 k+1}}    \prod_{j=a+1}^{k_0} ((\ft h_{\text{avg}})^{L_j,R_j}_{n_j})^{\iota_j} \frac{g_{n_j}^{\iota_j}}{\ld_{n_j}}
\end{align*}
is $\mathcal{B}_{\leq 2^{-10}L_{a}}$ measurable. Therefore, by Lemma~\ref{LEM:condchaos}, we get $T^{-1}M$-certainly that
\begin{align*}
|\eqref{n2nk0}|^{2}  \les &M^{\ta} \sum_{n_2, \ldots, n_{a}} \prod_{j=2}^{a} \frac{ |(\ft h_{\text{avg}})^{L_j,R_j}_{n_j}|^2}{\ld_{n_j}^{2}}  \\
&\times  \bigg| \sum_{n_{a+1},\ldots, n_{k_0}}\wt{\mathcal{K}}_{\vec \l}(m,\cj{n}) \wt{{\mathrm T}}^{\mathrm b, m,S_{J}}_{nn_1 n_2 \cdots n_{2 k+1}} \prod_{j=a+1}^{k_0} ((\ft h_{\text{avg}})^{L_j,R_j}_{n_j})^{\iota_j} \frac{g_{n_j}^{\iota_j}}{\ld_{n_j}} \bigg|^2 .
\end{align*}
We now repeat the above argument inductively to establish Lemma~\ref{LEM:nopair}.
\end{proof}

Then, by Lemma~\ref{LEM:nopair}, we obtain that $(T^{-1}M)$-certainly, 
\begin{align*}
|\eqref{n2nk0} |^{2} 
& \les M^{-2+2k \ta}  \bigg( \prod_{j=2}^{k_0} N_{j}^{-2}\bigg)\sum_{n_2,\ldots, n_{k_0}}  |\wt{\mathcal{K}}_{\vec \l}(m,\cj{n})|^2 {\mathrm T}^{\mathrm b, m,S_{J}}_{nn_1\cdots n_{2 k+1}}  |c(\bar n)|^2 \prod_{j=2}^{k_0} |(\ft h_{\text{avg}}^{N_{j}, L_{j}})_{n_j} |^{2}.
\end{align*}
Note that here the exceptional sets depend on $(n,n_1)$ but there are only $O(M^2)$ many such choices which can be absorbed by $e^{-(T^{-1}M)^{\ta}}$.
Returning and using Cauchy-Schwarz (in $(\tau_1,\ldots, \tau_{k_0})$), \eqref{Kavgbd}, \eqref{redu1}, and \eqref{Xnt}, $T^{-1}M$-certainly it holds that 
\begin{align*}
\eqref{redu1} & \les  \sum_{\l_1, \cdots, \l_{2k+1}} \|\mathcal{X}_{n}^{J}\|_{\l^2_n}  \\
& \les M^{-1+k\ta} \bigg(
\prod_{ j= 2}^{2k+1}N_{j}^{-1+\g}
\bigg) \sum_{|m|\les M^2} \sum_{\l_1, \cdots, \l_{k_0}}  
\int_{Q_{\l_1} \times \cdots Q_{\l_{k_0}}} \\
& \hphantom{XXXXXX} \bigg\| \wt{\mathcal{K}}_{\vec \l}(m,\cj{n}) {\mathrm T}^{\mathrm b, m,S_{J}}_{nn_1\cdots n_{2 k+1}} c(\cj{n})\prod_{j=1}^{k_0} (\ft h_{\text{avg}}^{N_{j}, L_{j}})_{n_j} \bigg\|_{\l^{2}_{\cj{n}}} d\tau_{1\cdots k_0} \\
& \les  M^{-1+2k\ta} \bigg( \prod_{j=2}^{2k+1}N_{j}^{-1+\g}\bigg) \bigg( \prod_{j=1}^{k_0} \sup_{E_{N_j} \backslash E_{N_j/2}} \| \jb{\tau}^{b}\ft h_{n_j}^{N_{j}, L_{j}} \|_{L^2_{\tau} } \bigg) 
\Big\| {\mathrm T}^{\mathrm b, m,S_{J}}_{nn_1\cdots n_{2 k+1}} c(\cj{n}) \Big\|_{\l^{2}_{\cj{n}}} \\
& \les M^{-1+2k\ta+\eps} \bigg( \prod_{j=2}^{2k+1}N_{j}^{-1+\g}\bigg) \| c(\cj{n})\|_{\l^{\infty}_{\cj{n}}}  \| {\mathrm T}^{\mathrm b, m,S_{J}}_{nn_1\cdots n_{2 k+1}}\|_{\l^{2}_{nn_1 \ldots n_{2k+1}}},
\end{align*}
where the summation over $(\l_1, \cdots, \l_{2k+1})$ is handled analogously to the summation over $\l_1$ in Case 1 in Subsection \ref{SUB:case1}.
As $n\neq n_1$ on $S_{J}$ for $J\in \{3,6\}$, \eqref{Tbs1} holds true.
Thus, by Lemma~\ref{LEM:ev1}, and \eqref{Tbs1} we have
\begin{align*}
\eqref{redu1}  & \les  M^{-1+2k\ta+\eps}\bigg( \prod_{j=2}^{2k+1}N_{j}^{-1+\g}\bigg) N_{(2)}^{\frac 12} N_{(3)}^{\frac 12+\eps} \prod_{j=4}^{2k+1} N_{(j)}^{1-\eps} \\
& \les M^{-1+ 2 k\ta+\eps} N_{(2)}^{-\frac 12 + \g } N_{(3)}^{-\frac 12 + 10k\g+\eps} \\
& \les  (2M)^{-1+\frac{1}{4}\g},
\end{align*}
which is sufficient for our purpose in view of \eqref{redu1} and \eqref{Xnt}.

It remains to consider the pairing cases in \eqref{n2nk0},
\begin{align}
 \sum_{n_{2} ,\ldots, n_{k_0}} \wt{\mathcal{K}}_{\vec \l}(m,\cj{n}){\mathrm T}^{\mathrm b, m,S_{J}}_{nn_1\cdots n_{2 k+1}}  c(\bar n) \prod_{j=2}^{k_0} (\ft h_{\text{avg}}^{N_{j}, L_{j}})_{n_j}^{\iota_j} \frac{g_{n_j}^{\iota_j}}{\ld_{n_j}} .
 \label{zpair1}
\end{align}
To simplify the notation, we write $h_{n_j}^{(j)}$ in place of $(\ft h_{\text{avg}}^{N_{j}, L_{j}})_{n_j}$ and define
\begin{align*}
\mathcal{D}_{m}(\cj{n}) = \mathcal{D}_{m}(n,n_1,\ldots, n_{2k+1}) = \wt{\mathcal{K}}_{\vec \l}(m,\cj{n}){\mathrm T}^{\mathrm b, m,S_{J}}_{nn_1\cdots n_{2 k+1}}  c(\bar n).
\end{align*}
 We rewrite \eqref{zpair1} as a finite linear combination of terms such that for each of them it holds that there is a partition $\{Y_{i}\}$, for $1\leq i\leq p$ and some $p\geq 1$,  and $Z$ of $\{2,\ldots, k_0\}$ such that $n_{j}=n_{j'}$ for all $j,j'\in Y_{i}$ and $Z$ has no pairings. As $N_{j} \sim N_{j'}$ for $j,j'\in Y_{i}$ for some $1\leq i \leq p$, we define $N_{Y_i} = \max_{j\in Y_{i}} N_j$. 
 Also, note that 
 \begin{align}
|Y_{i}|\geq 2 \quad \text{for all} \quad 1\leq i\leq p. \label{Ysize}
\end{align}
 Thus, each of the terms in \eqref{zpair1} can be written as
\begin{align}
\sum_{n_2, \ldots, n_{k_0}} \mathcal{D}_{m}(\cj{n}) 
\bigg( \prod_{i=1}^{p }\prod_{j\in Y_i} (h_{n_j}^{(j)})^{\iota_j} \frac{g_{n_j}^{\iota_j}}{\ld_{n_j}} \bigg) \cdot  \bigg( \prod_{j\in Z} (h_{n_j}^{(j)})^{\iota_j} \frac{g_{n_j}^{\iota_j}}{\ld_{n_j}} \bigg).  
\label{zpair2}
\end{align}
 Then, $T^{-1}M$-certainly it holds that 
 \begin{align*}
|\eqref{zpair2}|  &\les  M^{\ta} \prod_{i=1}^{p} N_{Y_{i}}^{-|Y_{i}|} \bigg( \sum_{k_i \sim N_{Y_i}} \prod_{j\in Y_i}|h_{k_i}^{(j)}|^2 \bigg)^{\frac 12} \prod_{j\in Z} N_{j}^{-1} \bigg( \sum_{\substack{n_{2},\ldots, n_{k_0}\\ n_j = n_{j'} \textup{ for } j, j' \in Y_i}} |\mathcal{D}_{m}(\cj{n})|^2 \prod_{j\in Z} |h_{n_j}^{(j)}|^2 \bigg)^{\frac 12}
\end{align*}
We now use this bound in \eqref{zpair0} along with Cauchy-Schwarz (in the $(\tau_1, \ldots, \tau_{k_0})$ variables), \eqref{TypeCnorm}, \eqref{Kavgbd}, \eqref{Msump2}, Lemma~\ref{LEM:ev1},
\begin{align}
\eqref{redu1} & \les \sum_{\l_1, \cdots, \l_{2k+1}} \eqref{zpair0} \notag\\
& \les M^{-1+2\ta} \prod_{j=k_0+1}^{2k+1} N_{j}^{-1+\g} \prod_{i=1}^{p} N_{Y_i}^{-|Y_i|+\frac 12} \prod_{j\in Z} N_{j}^{-1} \cdot 
N_{(3)}^{\eps} \prod_{j=4}^{2k+1}N_{(j)}^{\frac 12 -} \notag \\
& \hphantom{XXXXX} \times \sup_{|m|\les M^2} \bigg( \sum_{n,n_1,\ldots, n_{2k+1}}  {\mathrm T}^{\mathrm b, m,S_{J}}_{nn_1\cdots n_{2 k+1}} \bigg)^{\frac 12} \notag \\
& \les M^{-1+\ta} \prod_{j=k_0+1}^{2k+1} N_{j}^{-1+\g} \prod_{i=1}^{p} N_{Y_i}^{-|Y_i|+1} \prod_{j\in Z} N_{j}^{-1} \cdot 
N_{(3)}^{\eps} \prod_{j=4}^{2k+1}N_{(j)}^{\frac 12 -}  \notag \\
& \hphantom{XXXXX} \times \sup_{|m|\les M^2} \sup_{ \substack{ n_j  \\  j\in \{2,\ldots, k_0\}\setminus Z}} \bigg( \sum_{n,n_1}\sum_{n_j \, :\, j\in Z} \sum_{n_{k_0 +1}, \ldots, n_{2k+1}}  {\mathrm T}^{\mathrm b, m,S_{J}}_{nn_1\cdots n_{2 k+1}} \bigg)^{\frac 12}.  \notag
\end{align}
Note that because of the definition of a pairing, any summation over $\{k_i \sim L_{Y_i}\}$ actually contains only a single sum.
Now since $n\neq n_1$ on $S_{J}$ for $J\in \{3,6\}$ so that \eqref{Tbs1}, we have 
\begin{align*}
\sup_{|m|\les M^2} \sup_{ \substack{ n_j  \\  j\in \{2,\ldots, k_0\}\setminus Z}} \bigg( \sum_{n,n_1}\sum_{n_j \, :\, j\in Z} \sum_{n_{k_0 +1}, \ldots, n_{2k+1}}  {\mathrm T}^{\mathrm b, m,S_{J}}_{nn_1\cdots n_{2 k+1}} \bigg)^{\frac 12} \les M^{\eps} \prod_{ j\in Z} N_{j}^{\frac 12} \prod_{j=k_0 +1}^{2k+1} N_j^{\frac 12}.
\end{align*}
Thus, 
\begin{align*}
\eqref{redu1} & \les M^{-1+\ta+\eps}  \prod_{j=k_0+1}^{2k+1} N_{j}^{-\frac 12+\g} \prod_{i=1}^{p} N_{Y_i}^{-|Y_i|+1}  \prod_{j\in Z}N_{j}^{-\frac 12} \cdot N_{(3)}^{\eps} \prod_{j=4}^{2k+1} N_{(j)}^{\frac 12 -}. 
\end{align*}
Crudely using \eqref{Ysize}, we have
\begin{align*}
\prod_{i=1}^{p} N_{Y_i}^{-|Y_i|+1} \sim \prod_{i=1}^{p} \prod_{j\in Y_{i}} N_{j}^{-1+\frac{1}{|Y_i|}}  \les  \prod_{i=1}^{p} \prod_{j\in Y_{i}} N_{j}^{-\frac 12} \sim \prod_{j\in \{2,\ldots, k_0\}\setminus Z} N_{j}^{-\frac 12},
\end{align*}
and thus
\begin{align*}
\eqref{redu1} &\les M^{-1+\ta+\eps}   N_{(3)}^{\eps} \prod_{j=k_0+1}^{2k+1} N_{j}^{-\frac 12+\g} \prod_{j=2}^{k_0} N_{j}^{-\frac 12} \prod_{j=4}^{2k+1} N_{(j)}^{\frac 12 - }  \\
&\les  M^{-1+\ta+\eps}  N_{(2)}^{-\frac 12+ \g } N_{(3)}^{-\frac 12 +\g+\eps} N_{(4)}^{10k \g} \\
& \les M^{-1+\ta+\eps}N_{(2)}^{-\frac 12+ \g } N_{(3)}^{-\frac 12 +11k \g+\eps} \\
& \les (2M)^{-1+\frac 14 \g}.
\end{align*}
This completes the proof of the pairing cases, which in turn closes Case 2 and thus, altogether, we have established \eqref{z2M}.

\begin{ackno}\rm
Y.W. would like to thank Prof.~ Carlos E. Kenig 
for his teaching and continuous support over the last decades.
The authors would like to thank Tadahiro Oh for his helpful inputs during the early stages of this work
and for his continued support. 
The second author would like to thank Nikolay Tzvetkov for explaining their work \cite{BCST2} and for his valuable comments, which led to Remark \ref{flow_property}.
J.F. was partially supported by the ARC project FT230100588 and by the European Research Council
(grant no. 864138 “SingStochDispDyn”).
Y.W. was supported by the EPSRC New Investigator
Award (grant no. EP/V003178/1) and by the EPSRC Mathematical Sciences Small Grant (grant no. UKRI1116).
\end{ackno}

\end{document}